\documentclass{amsart}
\usepackage[top=1.4in, bottom=1.2in, left=1.2in, right=1.2in]{geometry}
\usepackage{xcolor}
\usepackage{fancyhdr}
\usepackage{enumerate}
\usepackage{comment}
\usepackage{url}
\usepackage{graphicx}
\usepackage{subfigure}
\usepackage{amsmath,amsfonts,amssymb,amsthm}
\usepackage{mathtools}
\usepackage{thmtools}
\usepackage{stmaryrd}
\usepackage{mathrsfs}
\usepackage{upgreek}
\usepackage[all,arc]{xy}

\usepackage[T1]{fontenc}
\usepackage[sc,osf]{mathpazo}

\usepackage{tikz}
\usepackage{tikz-cd}
\usetikzlibrary{matrix}

\usetikzlibrary{external}
\tikzcdset{every diagram/.append style={/tikz/external/export=false}}

\usepackage[colorlinks=true]{hyperref}

\newtheorem{thm}{Theorem}[section]
\newtheorem{cor}[thm]{Corollary}
\newtheorem{prop}[thm]{Proposition}
\newtheorem{lem}[thm]{Lemma}

\theoremstyle{definition}
\newtheorem{defn}[thm]{Definition}

\newtheorem{con}[thm]{Construction}
\newtheorem{exmp}[thm]{Example}

\theoremstyle{remark}
\newtheorem{rem}[thm]{Remark}
\newtheorem{rems}[thm]{Remarks}

\newcommand{\Z}{\mathbb{Z}}
\newcommand{\RP}{\mathbb{RP}}
\newcommand{\RR}{\mathbb{R}}
\newcommand{\FF}{\mathbb{F}}
\newcommand{\gl}{\mathfrak{gl}}
\newcommand{\lrb}[1]{\left\llbracket #1 \right\rrbracket}
\newcommand{\slide}{\mathrm{slide}}
\newcommand{\tw}{\mathrm{tw}_+}
\newcommand{\one}{\mathbf{1}}
\DeclareMathOperator{\idd}{id}

\DeclareMathOperator{\CKR}{CKR}

\input{figures.tex}

\title{The flip symmetry on Khovanov--Rozansky homology}
\author{Hongjian Yang}
\email{yhj@stanford.edu}
\address{Department of Mathematics, Stanford University, Stanford, CA 94305}

\subjclass[2020]{57K18, 20C08, 20G42}
\keywords{Khovanov--Rozansky homology, flip symmetry, $\gl_N$ webs, Soergel bimodules}
\date{\today}

\begin{document}

\begin{abstract}
    The flip symmetry on link diagrams induces an involution on Khovanov--Rozansky $\gl_N$ homology. We prove that this involution is diagonalizable with eigenvalues $\pm1$. On the one hand, it is the identity over $\FF_2$, generalizing the previous result of \cite{chen2025flip}. On the other hand, it is expected to be nontrivial over $\Z$ in general. The key ingredients of the proof are (1) a homotopy perturbation argument via a detailed study of the fork twist, which allows us to reduce the computation to planar $\gl_N$ webs, and (2) the computation of the flip map for planar $\gl_N$ webs via diagrammatics of Soergel bimodules. The latter computation can also be interpreted as a naturality result for the half twist action on type $A$ Soergel bimodules, which might be of independent interest.
\end{abstract}

\maketitle
\setcounter{tocdepth}{1}
\tableofcontents

\vspace{-5ex}
\begin{center}
    \includegraphics[width=0.15\linewidth]{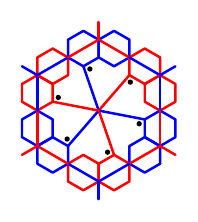}
\end{center}

\setlength{\parskip}{0.25\baselineskip}

\section{Introduction}

A continuing trend in developing new homological invariants in low-dimensional topology is to incorporate \textit{symmetries}, both those rooted in the theories themselves and those arising from symmetries of the topological objects. This has led to fruitful results in gauge theory and Floer homology, from Furuta's proof of the $10/8$-theorem \cite{furuta2001monopole} to Manolescu's disproof of the Triangulation Conjecture \cite{manolescu2016pin}. More recent progress includes the study of corks \cite{dai2022corks} and real Seiberg--Witten theory \cite{konno2021involutions,li2022monopole,konno2023involutions,miyazawa2023gauge}. Most related to the subject of the present paper is involutive Heegaard Floer homology defined by Hendricks and Manolescu \cite{hendricks2017involutive}, utilizing the conjugation symmetry on Heegaard diagrams of $3$-manifolds. 

In analogy with the $\iota$ map in Heegaard Floer theory, one can define an involution on Khovanov homology \cite{khovanov2000categorification}, using the \textit{flip symmetry} on link diagrams. To our knowledge, Viro first observed the flip symmetry, which was later exploited by Lipshitz and Sarkar \cite{lipshitz2014khovanov} to prove that their Khovanov stable homotopy type is independent of the choice of ladybug matchings. In previous work \cite{chen2025flip}, D. Chen and the author proved that this involution is the identity on Khovanov homology. Similar techniques have been used to compare the involutions on Khovanov homology arising from two types of symmetric diagrams of a strongly invertible knot \cite[Section 6]{chen2025flip} \cite{kim2026equivariant} and to refine the functoriality of Khovanov homology in $\RP^3$ \cite{ren2025intrinsic}.

The main theorem of \cite{chen2025flip} was stated for non-equivariant Khovanov homology over $\FF_2$, arguably the most vanilla member in the family of type $A$ link homology theories. There has been growing interest in studying the structural properties of this whole family \cite{khovanov2008matrix,mackaay2009sl,Rasmussen2006SomeDO,queffelec2016sln}, exploring how these theories parallel and depart from Khovanov homology. In this paper, we study the flip map on (uncolored, equivariant) Khovanov--Rozansky $\gl_N$ link homology \cite{khovanov2008matrix} over the integers, generalizing the previous result from \cite{chen2025flip}. 

Throughout the paper, fix an integer $N\ge 2$ and let $\lrb{\cdot}$ denote the equivariant $\gl_N$ tangle invariant (with $\Z$ coefficients) in the sense of \cite{ehrig2018functoriality}, which can be thought of as the universal version of $\gl_N$ link homology theories. Before stating the main theorem, we briefly collect constructions relevant to the flip symmetry; for details, see Section~\ref{sec:bg flip}. 

Given a link diagram $D$ for a link $K$, its \textit{flip diagram} $D^*$ is a diagram of the same link obtained by rotating $K$ in $\RR^3$ around an axis in the plane by $\pi$ and projecting it onto the same plane. The diagrams $D$ and $D^*$ are related by the trace cobordism of the rotation, which we call the \textit{flip cobordism}. The flip cobordism induces a chain homotopy equivalence \[\rho\colon\lrb{D}\to\lrb{D^*},\]well-defined up to homotopy. There is another natural identification \[\eta\colon\lrb{D}\to\lrb{D^*}\]constructed from the identification of the crossings of $D$ and $D^*$ via a vertex-wise reflection construction. 

The \textit{flip map} \[\kappa\coloneqq\eta^{-1}\circ\rho\colon\lrb{D}\to\lrb{D}\]is an analog of the Heegaard Floer $\iota$ map in quantum link homologies. It measures the difference between the topological identification $\rho$ and the combinatorial identification $\eta$. To further study this map, we usually require that our link diagram is obtained as the closure of a coherently oriented braid; every link admits such a diagram, and the flip map is independent of the choice of diagrams up to homotopy, cf. Proposition~\ref{prop:flip invariance}.

There is a $\{0,1\}^n$-grading on both $\lrb{D}$ and $\lrb{D^*}$ given by remembering the $0$ or $1$ resolution at each crossing, which we refer to as the \textit{cubical grading}. The map $\eta$ preserves the cubical grading by construction. Our first theorem asserts that so does the map $\rho$, up to homotopy.

\begin{thm}\label{thm:main filtration}
    Let $K$ be a link in $S^3$. Let $D$ be a diagram of $K$ obtained as the closure of a coherently oriented braid $\beta$ with $n$ crossings. Then the map induced by the flip cobordism \[\rho\colon\lrb{D}\to\lrb{D^*}\]has a representative $\rho'$ in the chain homotopy class that preserves the cubical grading.
\end{thm}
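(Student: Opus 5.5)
Write $\beta=\sigma_{i_1}^{\epsilon_1}\cdots\sigma_{i_n}^{\epsilon_n}$ on $m$ strands. The flip diagram $D^*$ is then the closure of the braid $\beta^*$ obtained by conjugating $\beta$ by the half twist $\Delta$ (rotation by $\pi$ about an in-plane axis), with word order and crossing types suitably reflected. The plan is to realize the flip cobordism as a movie that only moves the crossings of $\beta$ through a half twist. Concretely, in the closure we insert $\Delta\Delta^{-1}$ by Reidemeister II moves. Then we slide the crossings of $\beta$ one at a time through $\Delta$; each slide is a braid relation, and $\sigma_i\Delta=\Delta\sigma_{m-i}$. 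Finally we cancel $\Delta$ against $\Delta^{-1}$ around the closure. This exhibits $\rho$, up to homotopy, as a composite $\rho=\Phi_{\mathrm{cancel}}\circ\Phi_{\mathrm{slide}}\circ\Phi_{\mathrm{insert}}$ of Reidemeister II/III maps together with the trace (conjugation-invariance) isomorphism for the closure.

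We then express everything through the Rouquier complex. Viewing $\lrb{\beta}$ as a tensor product of two-term complexes $\lrb{\sigma_{i_k}^{\pm}}$, one per crossing, the cubical grading is the product of the two-term gradings. The key local statement is a \emph{fork-slide / twist naturality}. For a single crossing $\sigma_i^{\pm}$ we need a homotopy equivalence
\[\lrb{\Delta}\otimes\lrb{\sigma_i^{\pm}}\simeq\lrb{\sigma_{m-i}^{\pm}}\otimes\lrb{\Delta}\]
that is ``diagonal'' with respect to the crossing's two-term grading. That is, it sends the $0$-resolution term to the $0$-resolution term and the $1$-resolution term to the $1$-resolution term, tensored with $\lrb{\Delta}$, and carries no component changing the crossing's resolution. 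We would prove this by decomposing $\Delta$ so that the crossing passes through a ``fork'', a trivalent vertex or a crossing adjacent to a merge/split. For the fork, the twist $\lrb{\mathrm{fork\ twist}}$ is homotopy equivalent to a shifted web; we would establish this by Gaussian elimination. The slide map for the $0$-resolution (identity web) and for the $1$-resolution (the thick-edge web $B_i$) then become web isomorphisms, $B_i\otimes\lrb{\Delta}\cong\lrb{\Delta}\otimes B_{m-i}$.

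We then use homotopy perturbation. Filter $\lrb{\Delta\beta}$ by the cubical grading of $\beta$'s crossings. The associated graded pieces are $\lrb{\Delta}$ tensored with the webs of the resolutions of $\beta$. On each piece, the slide is the web isomorphism above. The true slide map differs from it by terms that are strictly filtration-increasing or strictly filtration-decreasing. By the homological perturbation lemma, we may replace these terms, up to homotopy, by a map with no filtration-changing component, provided the relevant perturbations only raise cubical degree. Since both source and target share the same cube shape and the differentials only increase cubical degree, a filtered map can be homotoped to one with vanishing off-diagonal part only if its lower-triangular part vanishes; so we need the slide map to be filtered in \emph{both} directions up to homotopy. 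This follows because the local equivalence is bigraded-diagonal.

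The insertion and cancellation of $\Delta\Delta^{-1}$ do not involve $\beta$'s crossings at all. They act as the identity on the $\beta$-cube tensor factor, so they preserve the cubical grading. Composing the three steps yields $\rho'$.

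I expect the main obstacle to be the second step: showing that sliding a crossing through the full twist is homotopic to a map that is diagonal on the crossing's cube, i.e.\ with no off-diagonal component $q$-compatible maps like $\mathrm{dot}$ or $\mathrm{zip}$ terms between $\mathbb{1}$ and $B_i$. The first thing I would try is an explicit computation for a single fork twist. There I would check the Gaussian elimination, and compare the resulting maps to Soergel-diagrammatic isomorphisms. Degree reasons in $q$ should kill most candidate off-diagonal terms, and I would aim to argue the rest by a bigraded degree count.
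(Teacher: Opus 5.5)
Your proposal has a genuine gap at the step you flag as the main obstacle, and it comes from aiming at the wrong target. You ask for local slide equivalences that are \emph{diagonal} in the crossing's resolution (``filtered in both directions'') and hope to exclude off-diagonal terms by $q$-degree counting. That will not work. Assemble the resolution-wise maps (the half-twist slide on $B_i$, the trivial one on $\one$) into a map of two-term cones. The square with the dot differential then commutes only up to homotopy; for a negative crossing it anti-commutes, so a sign must be inserted. The perturbation lemma therefore produces a component $h$ from the $0$-resolution of the source to the $1$-resolution of the target. This $h$ is a homotopy between the two composites around the square, so it has exactly their bidegree, and no degree count can rule it out. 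Moreover, in the intermediate complexes the crossings of $\Delta^{\pm1}$ can absorb the change in homological degree, so nothing forces $h$ to vanish.

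What you are missing is that one direction suffices. Represent each step by a map that is merely \emph{filtered}, i.e.\ non-decreasing in the cube order on the original $n$ crossings. The composite $\rho'\colon\lrb{D}\to\lrb{D^*}$ has homological degree $0$, and on both sides the homological degree is $|v|$ plus the same shift. So any component $v\to w$ with $w\ge v$, $w\neq v$ would raise homological degree. A filtered $\rho'$ therefore automatically preserves the cubical grading; this is how the paper concludes.

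Your movie also needs two repairs. First, realizing $\sigma_i\Delta=\Delta\sigma_{m-i}$ by braid relations alone gives the wrong matching of crossings. Reidemeister III moves never exchange two crossings between the same pair of strands. Hence the distinguished crossing is carried to the crossing of $\Delta$ on that pair, not to $\sigma_{m-i}$. Exchanging these two crossings is the two-strand model of Section~\ref{sec:flip 2 strands}, where the obvious Reidemeister II map is not filtered. The paper handles it (Proposition~\ref{prop:flip2strands}) as follows. It shows that $\rho_0=\mathrm{RII}^{-1}\circ\tw$ is a very strong deformation retract. It checks that the end-dot square commutes up to homotopy, while the start-dot square anti-commutes, whence $-\rho_0$. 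It then runs the perturbation lemma, and identifies the resulting filtered map with the Reidemeister II map via uniqueness of maps between simple tangles plus a sign check on oriented resolutions. General $k$ reduces to this via filtered Reidemeister III maps (Proposition~\ref{prop:flip k strands}). Your proposal addresses neither the commutation with the dot maps nor the identification of the assembled map with the cobordism map.

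Second, sliding $\Delta$ around the annulus and canceling it against $\Delta^{-1}$ only reaches the \emph{upper} closure of the conjugated braid, whereas $D^*$ is its lower closure. You still need the half sweep-around moves, which pass strands across the braid's crossings and so do involve them. You also need the fact that their maps can be chosen filtered \cite[Section 3.5]{morrison2022invariants}.
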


We are also able to compare the maps $\eta$ and $\rho'$ on each grading piece. It turns out that the chain complex splits into the $\pm1$-eigenspaces of the flip map, and the eigenvalue of each generator can be determined explicitly; see Proposition~\ref{prop: foam global sign} and the proof of the theorem below.

\begin{thm}\label{thm:main gln web}
    Let $D$ and $\rho'$ be as in Theorem~\ref{thm:main filtration}. The map $\eta^{-1}\circ\rho'\colon\lrb{D}\to\lrb{D}$ is diagonalizable over $\Z$ with eigenvalues $\pm 1$. 
\end{thm}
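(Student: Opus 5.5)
Since $\rho'$ preserves the cubical grading (Theorem~\ref{thm:main filtration}) and $\eta$ preserves it by construction, the composite $\eta^{-1}\circ\rho'$ splits as a direct sum over vertices $v\in\{0,1\}^n$ of maps $\kappa_v\colon \lrb{D_v}\to\lrb{D_v}$. Here $D_v$ is the planar $\gl_N$ web obtained by resolving the braid closure according to $v$. It therefore suffices to show that each $\kappa_v$ is diagonalizable over $\Z$ with eigenvalues $\pm1$.

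I would aim to prove a stronger statement: each $\kappa_v$ is a global sign $\pm\mathrm{id}$ on $\lrb{D_v}$. The sign should depend only on combinatorial data of $v$, such as the number of $1$-resolutions and the number of closure strands, and it should be computable as in Proposition~\ref{prop: foam global sign}. Diagonalizability over $\Z$ then follows at once. The complex is a direct sum of the pieces $\lrb{D_v}$, and $\eta^{-1}\rho'$ acts on each piece by a scalar $\pm1$. So the $+1$-eigenspace is the sum of the pieces with sign $+1$, the $-1$-eigenspace is the sum of the pieces with sign $-1$, and no division by $2$ is needed. The eigenspaces are chain subcomplexes only if the scalar is compatible with the differentials. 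If it is not, I still only need diagonalizability of the underlying graded module map, which is all the statement asserts.

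To compute $\kappa_v$, I would identify $\lrb{D_v}$ with the Hochschild-type closure of the Soergel bimodule attached to the web in the braid region. The flip then acts through the half-twist symmetry of type $A$ Soergel bimodules, conjugating by the longest element, which reverses the strand order. The key steps are as follows.
\begin{enumerate}
\item Express the cubical-degree-$v$ part of $\rho'$, obtained from the homotopy perturbation, as the foam or Soergel morphism $B_{v}\to B_{v}^{*}$ induced by the half-twist. This uses the fork-twist analysis: conjugating the crossing complex by a full strand rotation reduces, on each grading piece, to sliding merge/split vertices through the twist.
\item Show that the half-twist acts naturally on generating morphisms (dots, merges, splits) up to explicit signs.
\item Compare the result with $\eta$, which is vertex-wise reflection, and check that the two agree up to a product of local signs that is constant on $\lrb{D_v}$.
\end{enumerate}

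The main obstacle is step 2, the naturality of the half-twist with respect to the generators, together with pinning down the signs. The half-twist equivalence $\Delta\otimes B\otimes\Delta^{-1}\simeq B^{*}$ is only determined up to automorphism. So I would first fix a normalization on the identity bimodule, and then verify the generating relations: the polynomial action $x_i\mapsto x_{n+1-i}$, and the merge/split morphisms, which should go to their reflections times $(-1)^{ab}$ for thicknesses $a,b$. I would check these via the Rouquier complexes in Elias--Khovanov diagrammatics. Locality then gives that $\kappa_v$ is a scalar, and the scalar is a product of these signs, which yields the formula of Proposition~\ref{prop: foam global sign}.
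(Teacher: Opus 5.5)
Your reduction to single vertices agrees with the paper (there $\eta^{-1}\circ\rho'=\bigoplus_v(-1)^{n_-(v)}\eta_{D_v}^{-1}\circ\rho_{D_v}$). The gap is the step that carries all the weight: the claim that locality makes $\kappa_v$ a scalar does not follow from anything you set up. Naturality of the half twist with respect to generating morphisms only controls $\rho_{D_v}$ on spanning classes $[\overline W]$, where $W$ is a MOY foam out of an unlink and $\overline W$ is its capping. The sign obtained is the product of the local signs of the pieces of $W$.

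These local signs are computed in Proposition~\ref{prop:flip elementary gln foam}. Start dots and merges give $-1$, while end dots, splits, and six- and four-valent vertices give $+1$. The total is $(-1)^{\chi_2^-(W)}$, which depends on $W$ and not only on $v$. For instance, on the two-component unlink $U_2$, the plain cups $C$ and $B\circ C$ get opposite signs, where $B$ is the barbell (start dot followed by end dot). So your final sentence, which asserts both a constant and the $W$-dependent formula of Proposition~\ref{prop: foam global sign}, is inconsistent.

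Your local input is also off. The rule $(-1)^{ab}$ treats the two members of a pair such as zip/unzip symmetrically, but they must carry opposite signs. Indeed $B=\pm(x_1-x_2)$ and the half twist reverses the two strands, so $B$ anticommutes with the flip. Accordingly, the end dot commutes while the start dot anticommutes (Lemmas~\ref{lem:twist end} and~\ref{lem:twist start}). Moreover, your generator list omits the six- and four-valent vertices. The spanning set of Lemma~\ref{lem:MOY foam generates eco webs} uses them, and they make up the bulk of the paper's computation (Lemmas~\ref{lem:six valent} and~\ref{lem:four valent}).

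The paper never claims a scalar. It treats $(-1)^{\chi_2^-(W)}$ as eigenvalues on the free module $V'_\Gamma$ of capped MOY foams. It then descends to $\lrb{\Gamma}=V'_\Gamma/\operatorname{rad}$ via Lemma~\ref{lem: decompose into eigenspaces}, using that $2$ is a non-zero-divisor, that $\lrb{\Gamma}$ is free, and Quillen--Suslin over $\Z[X_1,\dots,X_N]$. Your proposal has no counterpart to this step.

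Be aware, though, that the example above shows this descent is delicate. In the free module $\lrb{U_2}$ one has $[B\circ C]=\pm(x_1-x_2)[C]\neq 0$. Yet $B\circ C$ is assigned $-1$, while the dotted cups making up $\pm(x_1-x_2)C$ are assigned $+1$. So a foam-dependent sign map need not preserve the radical. For the same reason, literal reflection of spanning foams does not respect this relation: reflection reverses the chirality of the barbell, so $[B^*\circ C^*]=\mp(x_1^*-x_2^*)[C^*]$, where $x_i^*$ is the dot on the reflected $i$-th circle.

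For foams with labels at most $3$, the sign in Lemma~\ref{lem:reflecting closed foams} is exactly $(-1)^{\chi(F^{(2)})}$. Together with additivity of $\chi$ under gluing along $\Gamma^{(2)}$, this shows that the \emph{twisted} reflection $[W]\mapsto(-1)^{\chi(W^{(2)})}[W^*]$ is well defined on $V_\Gamma/\operatorname{rad}$. Granting the paper's local computations, $\rho_{D_v}$ agrees with this twisted reflection on all spanning classes. So with $\eta$ normalized this way, $\eta^{-1}\circ\rho'$ is the scalar $(-1)^{n_-(v)}$ on each vertex $v$. The shape you guessed is therefore attainable, but only through this well-definedness analysis together with the complete list of local computations, and your proposal contains neither.
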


Combining these two theorems, we prove that the flip map is trivial over $\FF_2$ for $\gl_N$ link homology, generalizing \cite[Theorem 1.4]{chen2025flip}. 

\begin{cor}\label{cor:flip trivial over F2}
    Let $D$ be a diagram of a link $K$, not necessarily a coherently oriented braid closure. The flip map \[\kappa\colon\lrb{D}\to\lrb{D}\]is chain homotopic to the identity map over $\FF_2$.
\end{cor}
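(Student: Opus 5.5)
First I reduce to braid closures. By Proposition~\ref{prop:flip invariance}, the homotopy class of the flip map does not depend on the chosen diagram of $K$. More precisely, if two diagrams $D_1$ and $D_2$ are related by a sequence of Reidemeister moves, the resulting chain homotopy equivalence $\lrb{D_1}\to\lrb{D_2}$ intertwines $\kappa_{D_1}$ and $\kappa_{D_2}$ up to homotopy. Every link has a diagram $D$ that is the closure of a coherently oriented braid $\beta$ (Alexander's theorem). So it is enough to show that $\kappa_D\simeq \idd$ over $\FF_2$ for such a $D$, and then transport this along the equivalence. Working over $\FF_2$ means tensoring the $\Z$-complexes with $\FF_2$. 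The maps $\rho$ and $\eta$, and the homotopies relating them, are all defined over $\Z$, so reduction mod $2$ commutes with every construction used here.

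Next, for a braid closure $D$, Theorem~\ref{thm:main filtration} gives a representative $\rho'\simeq\rho$ that preserves the cubical grading. Since $\eta$ is an isomorphism of chain complexes, $\kappa=\eta^{-1}\circ\rho\simeq\eta^{-1}\circ\rho'$. By Theorem~\ref{thm:main gln web}, the map $\phi\coloneqq\eta^{-1}\circ\rho'$ is diagonalizable over $\Z$ with eigenvalues $\pm1$. So there is a $\Z$-basis of $\lrb{D}$, taken in each homological degree or each cubical piece (these are free modules of finite rank over the equivariant ground ring), in which $\phi$ is diagonal with entries $\pm1$. Reducing mod $2$ makes every diagonal entry $1$. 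Hence $\phi\otimes\FF_2$ is literally the identity map, not merely homotopic to it, and $\kappa\otimes\FF_2\simeq\phi\otimes\FF_2=\idd$.

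The main point to check is what ``diagonalizable over $\Z$'' means. It must mean that $\phi$ is diagonal with respect to a basis of the underlying module, possibly piece by piece in the cubical grading, and not only that $\phi$ is diagonalizable after tensoring with $\mathbb{Q}$ or $\Z[1/2]$. For example, $\begin{pmatrix}1&1\\0&-1\end{pmatrix}$ is an involution whose reduction mod $2$ is not the identity. If the statement only gives an eigenspace decomposition after inverting $2$, I would instead use the explicit description referenced before Theorem~\ref{thm:main gln web} (Proposition~\ref{prop: foam global sign}). That description says that on each cubical summand, which is a planar web $\lrb{W}$, the map $\phi$ acts on a distinguished basis of foam generators by signs $\pm1$. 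This gives integral diagonalization directly.

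A second, minor point is compatibility with the differential. The argument above does not need $\phi\otimes\FF_2$ to commute with a splitting into eigenspaces. It only needs the equality $\phi\otimes\FF_2=\idd$ of module maps, which is automatically a chain map. This completes the plan.
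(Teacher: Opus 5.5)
Your proposal is correct and follows the paper's proof: reduce to a coherently oriented braid closure via Proposition~\ref{prop:flip invariance}, then combine Theorems~\ref{thm:main filtration} and~\ref{thm:main gln web} and reduce modulo $2$. Your worry about what integral diagonalizability means is settled in the paper by Lemma~\ref{lem: decompose into eigenspaces}. In any case, your fallback via Proposition~\ref{prop: foam global sign} already suffices, since signs $\pm1$ on a generating set force the identity modulo $2$.
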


As a further corollary, we relate two involutions on link homology of a strongly invertible knot, generalizing \cite[Theorem 1.9]{chen2025flip}. Recall that there are two types of symmetric diagrams for a strongly invertible knot: the \textit{intravergent diagram}, where the axis is perpendicular to the projection plane, and the \textit{transvergent diagram}, where the axis is on the projection plane. The following corollary asserts that these two symmetric diagrams yield the same involution on $\gl_N$ link homology. The proof is essentially the same as \cite[Theorem 1.9]{chen2025flip}: it follows from the triviality of the flip map with $\FF_2$ coefficients.

\begin{cor}\label{cor:strongly invertible}
    Let $(K,\tau)$ be a strongly invertible knot. Let $D_t$ and $D_i$ be a transvergent diagram and an intravergent diagram of $(K,\tau)$, respectively. Let $\CKR_N(\bullet;\FF_2)$ be the chain complex that computes the $\gl_N$ link homology with $\FF_2$ coefficients. The involution $\tau$ induces two chain maps
    \[ \tau_t \colon \CKR_N(D_t;\FF_2) \to \CKR_N(D_t;\FF_2) \quad \text{and} \quad \tau_i \colon \CKR_N(D_i;\FF_2) \to \CKR_N(D_i;\FF_2) \]
    by directly applying the symmetry on each resolution. Let $\varphi \colon \CKR_N(D_t;\FF_2) \to \CKR_N(D_i;\FF_2)$ be a chain homotopy equivalence induced by a sequence of Reidemeister moves from $D_t$ to $D_i$. Then the following diagram commutes up to homotopy:
    \[
        \tikzexternaldisable
        \begin{tikzcd}
            \CKR_N(D_t;\FF_2) \arrow[r, "\varphi"] \arrow[d, "\tau_t"'] & \CKR_N(D_i;\FF_2) \arrow[d, "\tau_i"] \\
            \CKR_N(D_t;\FF_2) \arrow[r, "\varphi"] & \CKR_N(D_i;\FF_2)
        \end{tikzcd}.
        \tikzexternalenable
    \]
\end{cor}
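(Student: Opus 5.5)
We follow the strategy of \cite[Theorem 1.9]{chen2025flip} and reduce the statement to Corollary~\ref{cor:flip trivial over F2}.

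\emph{Step 1: write the transvergent involution as a flip composed with a Reidemeister map.} A transvergent diagram $D_t$ is symmetric under rotation by $\pi$ about an axis in the projection plane. Hence the flip diagram $D_t^*$, taken about that axis, coincides with $D_t$ as a planar diagram. Under this identification, the combinatorial identification $\eta\colon \CKR_N(D_t)\to\CKR_N(D_t^*)=\CKR_N(D_t)$ is exactly the map $\tau_t$. Indeed, both apply the reflection to each resolution vertex-wise; one checks that the vertex-wise reflection used to define $\eta$ agrees with directly applying the symmetry to each resolved web and its foams. Meanwhile $\rho$ is the map induced by the flip cobordism, i.e.\ the trace of the rotation isotopy. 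Since $\tau$ is realized by an isotopy of $S^3$ (rotation about the axis), $\rho$ is the canonical map induced by that isotopy. So $\tau_t=\eta=\rho\circ\kappa^{-1}$. By Corollary~\ref{cor:flip trivial over F2}, $\tau_t\simeq\rho$ over $\FF_2$.

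\emph{Step 2: the intravergent side.} For an intravergent diagram $D_i$, the symmetry is a rotation of the projection plane by $\pi$, a planar isotopy. The map $\tau_i$, obtained by applying the rotation to each resolution, is therefore the map induced by the corresponding movie of planar isotopies. By functoriality of $\lrb{\cdot}$ (\cite{ehrig2018functoriality}), this is the map of the trace cobordism of the rotation, with no flip involved. So $\tau_i$ is the cobordism map $\Phi_i$ of the isotopy $\tau$ read through $D_i$.

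\emph{Step 3: compare via functoriality.} The isotopies realizing $\tau$ on the two sides are related by conjugation by the isotopy underlying $\varphi$. Both $\rho$ (on $D_t$) and $\Phi_i$ (on $D_i$) are maps induced by a representative of the involution $\tau$ composed with isotopies. Functoriality of $\gl_N$ homology under link cobordisms up to isotopy rel boundary then gives $\varphi\circ\rho\simeq\Phi_i\circ\varphi$. This is where the identity of traces matters: the two movies (rotate about the in-plane axis then apply the Reidemeister sequence, versus apply the sequence then rotate about the perpendicular axis) must give isotopic cobordisms in $S^3\times I$. They do, since both are traces of isotopies from $\mathrm{id}$ to $\tau$ composed with a fixed isotopy, and the space of rotations joining these is connected enough to give an isotopy of traces. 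In \cite{chen2025flip} this was handled by noting that $\tau$ is isotopic to the identity through rotations in $SO(3)$, whose fundamental group is $\Z/2$. Any potential discrepancy is therefore a full $2\pi$ twist, whose trace is isotopic rel boundary to the product cobordism (the ``sweep-around'' property). The sign ambiguity of $\gl_N$ functoriality over $\Z$ is irrelevant over $\FF_2$.

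Combining the three steps gives $\tau_i\circ\varphi\simeq\varphi\circ\tau_t$ over $\FF_2$. The main obstacle is Step 1, i.e.\ verifying carefully that the involution $\tau_t$ coincides with $\eta$ on the nose, including on foams and with the chosen orientation conventions. I expect this to follow from the definition of the vertex-wise reflection in Section~\ref{sec:bg flip}.
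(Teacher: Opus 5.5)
Your proposal is correct and follows the same route as the paper, which defers to the argument of \cite[Theorem 1.9]{chen2025flip}: on $D_t=D_t^*$ the involution $\tau_t$ is $\eta$, so Corollary~\ref{cor:flip trivial over F2} gives $\tau_t=\eta\simeq\rho$ over $\FF_2$, while $\tau_i$ is the planar-isotopy map of the rotation of $D_i$, and functoriality compares the two. In Step 3 you can drop the $\pi_1(SO(3))$ discussion, including the claim that the trace of a $2\pi$ twist is isotopic rel boundary to a product, which is unjustified and is not the sweep-around property: take $\varphi$ induced by the rotation $\phi_s$ that tilts the axis $A$ into the perpendicular position, and let $R_A(\theta)$ be the rotation by $\theta$ about $A$; then the square $(s,\theta)\mapsto \phi_s R_A(\theta)(K)$ is a homotopy rel endpoints between the two paths of knots, hence an isotopy rel boundary of the two traces, so no discrepancy arises.
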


The proof of Theorem~\ref{thm:main filtration} is similar to the main arguments in \cite{chen2025flip}. We introduce canceling pairs of half twists to realize the flip cobordism, apply a homotopy perturbation argument to obtain a filtered map, and conclude that the perturbed map preserves the cubical grading since it preserves the homological degree, which reduces the computation to the planar case. Compared to \cite{chen2025flip}, the most important technical difference is that the appearance of thick ($2$-labeled) edges in the cube of resolutions invalidates the cup-sliding trick from \cite{rozansky2010categorification}. Instead, we analyze in more detail the atomic component of the flip cobordism---the fork twist on two strands---to carry out the homotopy perturbation argument. Moreover, we also resolve the sign ambiguity that appears in the homotopy perturbation argument, allowing us to work over the integers.

The proof of Theorem~\ref{thm:main gln web}, on the other hand, is more interesting. For $\mathfrak{sl}_2$ Khovanov homology that we studied in \cite{chen2025flip}, each resolution of a link diagram is a crossingless diagram of an unlink, for which the flip map is easy to compute by hand. In the (uncolored) $\gl_N$ homology case, resolutions of a link diagram can be arbitrary planar $\gl_N$ webs with $1$ and $2$ labels, and the treatment becomes significantly more sophisticated. The flip cobordisms on planar $\gl_N$ webs should be thought of as induced by a foam in $I\times\RR^3$. While a topological argument we explain in Remark~\ref{rem:flip topological argument} seems to be convincing, it would require certain functoriality for tangled (spatial) $\gl_N$ webs and foams, which to our knowledge is currently unavailable.

What we do instead is to resolve this functoriality issue in a restricted but sufficient case. When a planar $\gl_N$ web arises as a resolution of an uncolored coherently oriented braid closure, its $\gl_N$ evaluation can be generated by certain foams supporting the MOY calculus \cite{mackaay2009sl}. We prove that these elementary foams commute with the flip cobordism up to an explicit sign; see Proposition~\ref{prop:flip elementary gln foam}. The proof relies on the slide homotopies computed in \cite{stroppel2024braiding} and the twist homotopies computed in this paper, all via diagrammatic calculations of Soergel bimodules originally developed in \cite{elias2010diagrammatics}. Our computation can also be interpreted in the language of Soergel bimodules; see Remark~\ref{rem: half twists on Soergel mod}. Alternatively, it should be controlled by the internal braid group action of the half twist on categorified quantum groups \cite{Abram2022CategorificationOT} via categorified skew Howe duality \cite{queffelec2016sln}.

\subsection*{Further directions}While Corollary~\ref{cor:flip trivial over F2} asserts that the flip map is trivial over $\FF_2$, it is generally nontrivial over $\Z$. An interesting question is to further understand its $\pm 1$-eigenspaces (at the level of link \textit{homologies} rather than chain complexes) and to see if they carry interesting topological information. The author plans to explore this in future work.

The present paper studies the flip map on uncolored $\gl_N$ homology, and a natural next step is to study the colored version. We believe that the homotopy perturbation argument supporting Theorem~\ref{thm:main filtration} should remain valid with mild modifications. As in the uncolored case, the computation for planar webs amounts to a partial functoriality statement for spatial webs, so one should expect either a substantially heavier computation or conceptually new input. 

\subsection*{Organization of the paper}We review necessary background in Section~\ref{sec:bg}. We introduce the flip symmetry and define the flip map in Section~\ref{sec:bg flip}. In Section~\ref{sec:flip 2 strands}, we study the fork twists, the flip cobordism on $2$-strand tangles. This serves as the atomic ingredient of the homotopy perturbation argument we carry out in Section~\ref{sec: flip n strand}, where we prove Theorem~\ref{thm:main filtration} and hence reduce the computation to planar webs. In Section~\ref{sec:flip planar web}, we explain how to flip planar webs and prove Theorem~\ref{thm:main gln web} and Corollary~\ref{cor:flip trivial over F2}. Finally, Section~\ref{sec:appendix} includes the postponed proof of Proposition~\ref{prop:flip elementary gln foam}.

\subsection*{Note on AI use}Generative AI tools helped draw pictures and improve the writing quality of this manuscript. Mathematical content isx not generated by AI, and the author takes full responsibility for the correctness of all mathematical statements and proofs.

\subsection*{Acknowledgments}This work is a natural continuation of the previous collaboration \cite{chen2025flip} with Daren Chen, and I would like to thank him for many inspiring conversations. I would also like to thank Aaron Lauda, Qianhe Qin, Hoel Queffelec, Qiuyu Ren, and Paul Wedrich for helpful discussions, and my advisor Ciprian Manolescu for his continued guidance and encouragement. I am also grateful to Catharina Stroppel and Paul Wedrich for allowing me to use the TikZ package from \cite{stroppel2024braiding} to depict webs and Soergel calculus. This work was partially supported by the Simons Collaboration grant on New Structures in Low-Dimensional Topology and a Simons Dissertation Fellowship.

\section{Background}\label{sec:bg}

\subsection{Foams and $\gl_N$ link homology}\label{sec:bg gln homology}

We briefly review the construction of the (equivariant, uncolored) $\gl_N$ link homology, following \cite{wang2024n}. It takes the form of a chain complex $\lrb{D}$ of graded modules over a ground ring $R$, assigned to each link diagram $D$, assembled into a canopolis in the sense of Bar-Natan \cite{bar2005khovanov}. We only record the structural features of the theory that will be used in this paper; for a detailed treatment, we refer the reader to \cite{wang2024n,ehrig2018functoriality}.

\subsubsection*{Webs}A (planar) \textit{$\gl_N$ web} is an oriented trivalent graph properly embedded in the plane, whose edges are labeled by elements of $\{0,1,\dots,N\}$ subject to a flow condition at each vertex; self-loops, multiple edges, and circles without vertices are allowed. Throughout this paper we only encounter webs whose edges are labeled by $1$ and $2$, and we sometimes call a $2$-labeled edge a \textit{thick edge}. There are two local forms of a web near a vertex: two $1$-labeled edges merge into a $2$-labeled edge, or a $2$-labeled edge splits into two $1$-labeled edges. 

\subsubsection*{Foams}A \textit{$\gl_N$ foam} is a singular cobordism between $\gl_N$ webs. More precisely, it is a compact $2$-dimensional CW complex between $\gl_N$ webs, properly embedded in $\RR^2\times I$, such that every interior point has a neighborhood homeomorphic to one of the following three local models: an open set in $\RR^2$, the trivalent singularity (i.e., the letter Y) times $[0,1]$, or the cone over the one-skeleton of a tetrahedron. The connected components of the sets of points of these three types are called \textit{facets}, \textit{seams}, and \textit{singular vertices}, respectively. Each facet $f$ is oriented and labeled by an element $\ell(f)$ in $\{0,1,\dots,N\}$, and may carry decorations by symmetric polynomials in $\ell(f)$ variables; each seam is oriented. These data are subject to certain flow conditions and orientation compatibility conditions along seams and singular vertices. For a complete definition of $\gl_N$ foams, we refer the reader to \cite[Definition 2.8]{ehrig2018functoriality}. Throughout the paper, we read foams from bottom to top.

\subsubsection*{The Robert--Wagner foam evaluation}

A \textit{closed $\gl_N$ foam} is a $\gl_N$ foam from the empty web to itself. For a closed foam $F$, Robert and Wagner \cite{robert2020closed} define an \textit{evaluation} $\langle F\rangle$ in the ground ring $R$, which we now recall. We will usually take the universal example $R=\Z[X_1,\dots,X_N]$; one can define the theory for other rings by extension of scalars.  These formulas are used only in the proof of Lemma~\ref{lem:reflecting closed foams}, where we compare the evaluations of a foam and its mirror.

A \textit{coloring} $c$ of a closed $\gl_N$ foam $F$ assigns an $\ell(f)$-subset $c(f)\subset\{1,\dots,N\}$ to each facet $f$ of $F$ subject to the following flow condition: if $f,g,h$ are three facets adjacent to a seam and $\ell(f)+\ell(g)=\ell(h)$, then $c(f)\cup c(g)=c(h)$. Given a closed $\gl_N$ foam $F$ and a coloring $c$ of $F$, Robert and Wagner define a homogeneous rational function in the variables $X_1,\dots,X_N$ of the form \[\langle F,c\rangle=(-1)^{s(F,c)}\frac{P(F,c)}{Q(F,c)},\]where $P(F,c)$ is a polynomial, $Q(F,c)$ is a rational function, and $s(F,c)$ is an integer, all explained below. The evaluation of $F$ is defined as \[\langle F\rangle\coloneqq\sum_c\langle F,c\rangle,\]where the sum is over all colorings of $F$. Although each $\langle F,c\rangle$ is only a rational function, Robert and Wagner prove that $\langle F\rangle$ is a symmetric polynomial \cite[Proposition 2.18]{robert2020closed}.

The polynomial $P(F,c)$ is defined as \[P(F,c)=\prod_{f\text{ facet}}P_f(X_{c(f)})\in \Z[X_1,\dots,X_N],\]where $P_f$ is the decoration on $f$, and $X_{c(f)}$ is the set of variables $\{X_i\mid i\in c(f)\}$; $P_f(X_{c(f)})$ is well-defined since $P_f$ is required to be symmetric.

To define $Q(F,c)$ and $s(F,c)$, we first introduce some notation. For $i\in\{1,\dots,N\}$, the \textit{monochrome surface} $F_i(c)$ is the union of all facets whose colorings contain $i$; it turns out to be a closed oriented surface. For $i,j\in\{1,\dots,N\}$ with $i<j$, the \textit{bichrome surface} $F_{ij}(c)$ is the union of all facets whose colorings contain exactly one of $i$ and $j$; it is also a closed oriented surface. In this situation, consider a seam with adjacent facets $f,g,h$ such that $i\in c(f)$, $j\in c(g)$, and $i,j\in c(h)$. The seam is said to be \textit{positive} with respect to $i<j$ if the cyclic order of the facets around the seam is $(f,g,h)$; otherwise, it is said to be \textit{negative}. The union of the seams that are positive (resp. negative) with respect to $i<j$ is a collection of simple closed curves in $F_{ij}(c)$, and we let $\theta_{ij}^+(F,c)$ (resp. $\theta_{ij}^-(F,c)$) be the number of such curves. Let $\theta_{ij}(F,c)\coloneqq\theta_{ij}^+(F,c)+\theta_{ij}^-(F,c)$. With this notation, we define \[Q(F,c)=\prod_{1\le i<j\le N}(X_i-X_j)^{\chi(F_{ij}(c))/2},\]and\[s(F,c)=\sum_{i=1}^N i\cdot\chi(F_i(c))/2+\sum_{1\le i<j\le N}\theta_{ij}^+(F,c),\]where $\chi(\cdot)$ denotes the Euler characteristic.

\subsubsection*{State spaces}For each planar $\gl_N$ web $\Gamma$, the theory assigns a $\Z$-graded $R$-module $\lrb{\Gamma}$, called the \textit{state space} (or the \textit{evaluation}) of $\Gamma$, via the universal construction \cite{blanchet1995topological}, as follows. Let $V_\Gamma$ be the free $R$-module generated by foams bounding $\Gamma$ in the lower half space, viewed as foams from the empty set to $\Gamma$, equipped with the symmetric $R$-bilinear form $\langle-,-\rangle$ obtained by evaluating closed foams: for $F,G\in V_\Gamma$, \[\langle F,G\rangle\coloneqq\langle F\circ \overline{G}\rangle,\]where $\overline{G}$ is the $\gl_N$ foam from $\Gamma$ to $\emptyset$ obtained by reflecting $G$ across the $xy$-plane. Define \[\lrb{\Gamma}\coloneqq V_\Gamma/\operatorname{rad}\langle-,-\rangle.\]Note that this construction is automatically functorial: a foam $F$ from $\Gamma_1$ to $\Gamma_2$ induces a map \[\lrb{F}\colon\lrb{\Gamma_1}\to\lrb{\Gamma_2},\]well-defined up to isotopy of $F$ relative to the boundary.

The construction above in fact carries a quantum grading. The graded rank of $\lrb{\Gamma}$ can be computed via MOY calculus \cite{murakami1998homily}. In our setting, the local relations in MOY calculus can be found in \cite[Figure 2]{wang2024n}.

\subsubsection*{Link homology}We now explain how the previous construction leads to a link homology theory. An oriented crossing admits two resolutions, obtained by replacing the crossing either by two parallel $1$-labeled edges or by a $2$-labeled thick edge. Depending on whether the crossing is positive or negative, one of the resolutions is said to be the $0$-resolution while the other is the $1$-resolution; see Figure~\ref{fig:resolution}. Two resolutions of a crossing are related by a $\gl_N$ foam, called the \textit{zip} or \textit{unzip} foam, as depicted in Figure~\ref{fig:zipunzip}.

\begin{figure}[hbtp]
    \centering
    \includegraphics[width=0.2\linewidth]{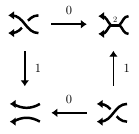}
    \caption{The $0$-resolution and the $1$-resolution of positive and negative crossings.}
    \label{fig:resolution}
\end{figure}

\begin{figure}[hbtp]
    \centering
    \includegraphics{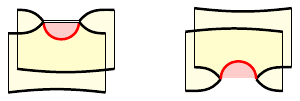}
    \caption{The zip foam (left) and the unzip foam (right).}
    \label{fig:zipunzip}
\end{figure}

Let $D$ be a diagram of an oriented link $K$ with $n$ (ordered) crossings, of which $n_+$ are positive and $n_-$ are negative. A \textit{complete resolution} of $D$ is a map $v\colon\{1,2,\dots,n\}\to\{0,1\}$, which gives rise to a (planar) $\gl_N$ web $D_v$ by assigning the $v(i)$-resolution to the $i$-th crossing. Let \[|v|\coloneqq\sum_{i=1}^n v(i)\]denote the $\ell^1$-norm of $v$. All such $v$ are encoded in the \textit{cube category} $\{0,1\}^n$, viewed as the poset of subsets of $\{1,2,\dots,n\}$ ordered by inclusion. An \textit{edge} in the cube category is a pair $u\to v$ such that $u(i)\le v(i)$ for all $i$ and $|v-u|=1$. The cube category naturally carries a grading by $|v|$.

All complete resolutions of $D$ can be organized into the \textit{cube of resolutions} of $D$: it is a functor from the cube category to the category of $\gl_N$ webs and foams, sending $v$ to $D_v$ and an edge $u\to v$ to the foam $F_{uv}$, the zip or unzip foam near the unique crossing $i$ such that $u(i)=0$ and $v(i)=1$. Applying the evaluation functor $\lrb{-}$ to the cube of resolutions and taking iterated mapping cones gives rise to a chain complex $\lrb{D}$ of $R$-modules. More precisely, all constructions above are in fact carried out in a graded setting; this grading is called the quantum grading, or $q$-grading. We introduce a homological grading, or $h$-grading, on $\lrb{D_v}$ by declaring that it lies in homological grading $0$. Now for integers $i,j$ and a bigraded $R$-module $V$, define $h^iq^jV$ to be the bigraded $R$-module obtained by shifting the bigrading by $(i,j)$. The Khovanov--Rozansky $\gl_N$ chain complex is then defined as\[\lrb{D}\coloneqq \left(h^{-n_+}q^{Nn_+-(N-1)n_-}\bigoplus_{v\in\{0,1\}^n}h^{|v|}q^{-|v|}\lrb{D_v},\,d=\sum_{u\to v}(-1)^{s(u,v)}\lrb{F_{uv}}\right),\]where $s(u,v)$ is the number of $1$'s in $u$ before the unique $1$ in $v-u$. The chain homotopy type of $\lrb{D}$ is a functorial invariant of the oriented link $K$ \cite{queffelec2016sln,robert2020closed,ehrig2018functoriality}, and the homology of $\lrb{D}$ is the Khovanov--Rozansky $\gl_N$ link homology of $K$. The whole construction above admits a \textit{canopolisation} which extends the definition of $\lrb{D}$ to tangles and tangle cobordisms; see \cite{ehrig2018functoriality}.

\subsection{MOY foams and Soergel calculus}\label{sec:bg MOY foams}

Most of our computation takes place on a restricted class of webs and foams. In this subsection, we introduce them and set up the diagrammatic language used throughout the paper.

\subsubsection*{Coherently oriented webs}A \textit{coherently oriented braid} is a braid whose strands are oriented in the same direction, and a \textit{coherently oriented braid closure} is the link diagram obtained by closing a coherently oriented braid. We say a $\gl_N$ web is \textit{coherently oriented} if it can be embedded into an annulus with core $S^1$ such that all edges are directed in the sense that the tangent vector of the edge has a positive inner product with the tangent vector of $S^1$ at each point. We say a $\gl_N$ web is \textit{elementary} if it contains only $1$- and $2$-labeled edges and no $2$-labeled circles. It is easy to see that the $\gl_N$ webs arising in the cube of resolutions of an uncolored coherently oriented braid closure are all elementary and coherently oriented. We will mostly work with elementary and coherently oriented webs by restricting our link diagrams to coherently oriented braid closures. 

\subsubsection*{MOY foams}

The state spaces of elementary coherently oriented webs can be described using a much smaller class of foams, which we call \textit{MOY foams}. Recall that a general $\gl_N$ foam is assembled from a large collection of local pieces near its singular points; see \cite[Remark 2.11]{ehrig2018functoriality}. A \textit{MOY foam} is a foam that can be written as a composition of ordinary foams with labels $1$ and $2$, possibly carrying dots, together with the following four singular local pieces: \begin{itemize}
    \item Zip and unzip foams with $1$ and $2$ labels, as depicted in Figure~\ref{fig:zipunzip}.
    \item Digon creation and annihilation foams with $1$ and $2$ labels, as depicted in Figure~\ref{fig:digon}.
    \item The foam depicted in Figure~\ref{fig:starfoam}, which contains six singular vertices and a $3$-labeled facet.
    \item The foam depicted in Figure~\ref{fig:fourvalentfoam}, which is isotopic to the identity foam but switches the order of thick edges.
\end{itemize}

\begin{figure}[hbtp]
    \centering
    \includegraphics{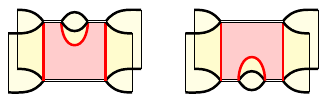}
    \caption{The digon creation foam (left) and the digon annihilation foam (right).}
    \label{fig:digon}
\end{figure}

\begin{figure}
    \centering
    \includegraphics[width=0.2\linewidth]{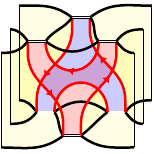}
    \caption{The foam corresponding to a six-valent vertex in Soergel calculus.}
    \label{fig:starfoam}
\end{figure}

\begin{figure}[hbtp]
    \centering
    \includegraphics[scale=0.8]{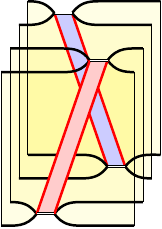}
    \caption{The foam corresponding to a four-valent vertex in Soergel calculus.}
    \label{fig:fourvalentfoam}
\end{figure}

\begin{lem}\label{lem:MOY foam generates eco webs}
    Let $\Gamma$ be an elementary coherently oriented web. Then the state space $\lrb{\Gamma}$ is generated by images of MOY foams between unlinks and $\Gamma$.
\end{lem}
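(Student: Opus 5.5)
We prove Lemma~\ref{lem:MOY foam generates eco webs} by induction, using the MOY calculus relations. Each relation, for elementary coherently oriented webs, comes with explicit MOY foams that realize a direct sum decomposition of state spaces.

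\textbf{Reduction to a combinatorial normal form.} Cut the annulus containing $\Gamma$ along a radial arc transverse to all edges. Since $\Gamma$ is elementary and coherently oriented, reading around the core expresses $\Gamma$ as the closure of a word in the elementary ``ladder'' pieces. On $k$ strands, the $i$-th piece splits strands $i,i+1$ into a thick edge and then merges back; this is the web $B_i$ underlying the Bott--Samelson bimodule $B_{s_i}$. We may have to first slide thick edges so that no two are at the same level, and this is exactly what the four-valent foam of Figure~\ref{fig:fourvalentfoam} does. Thus $\Gamma$ is the closure of a Bott--Samelson word $B_{i_1}\cdots B_{i_m}$.

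\textbf{Induction on the number $m$ of thick edges.} For the base case $m=0$, $\Gamma$ is an unlink and the identity foam suffices. For the inductive step, we use the Soergel-type relations, which correspond to MOY relations. Each is a direct sum decomposition whose inclusion and projection maps are MOY foams:
\begin{itemize}
\item $B_iB_i\cong B_i\{1\}\oplus B_i\{-1\}$, realized by digon creation and annihilation with dots;
\item $B_iB_{i+1}B_i\cong B_{i,i+1}\oplus B_i$, where $B_{i,i+1}$ is realized by the six-valent foam of Figure~\ref{fig:starfoam} with its $3$-labeled facet;
\item far commutativity, realized by the four-valent foam;
\item the closure (Markov-type) relation: a thick edge adjacent to the closure of a single outer strand becomes a digon after isotopy in the annulus, and is removed by the digon relation, reducing both $k$ and $m$.
\end{itemize}

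Next we run Soergel's (or Elias--Khovanov's) reduction in the Hecke algebra trace: any closed Bott--Samelson word can be reduced by these moves to words with fewer thick edges, or to Markov-reducible ones. When a decomposition $\lrb{\Gamma}\cong\bigoplus\lrb{\Gamma_j}$ is given by MOY foams $\iota_j,\pi_j$ with $\sum\iota_j\pi_j=\mathrm{id}$, the identity $x=\sum\iota_j\pi_j x$ shows that if each $\lrb{\Gamma_j}$ is generated by images of MOY foams from unlinks, then so is $\lrb{\Gamma}$, by composition. The $B_{i,i+1}$ summand is not itself a word in the $B_i$; this is handled by decomposing $B_iB_{i+1}B_i$ differently, or by noting that the summand is a direct summand of it via MOY foams.

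\textbf{Main obstacle.} The difficulty is the termination of the reduction. Not every closed word shortens under the braid and idempotent relations: a word like $B_1B_2$ with reduced underlying permutation has no braid-move simplification. For such words I would use the trace (cyclic rotation in the annulus, which is an isotopy) to move letters around. I would then follow the standard proof that the Hecke algebra trace is determined by Markov reductions (Ocneanu's trace): after cyclic rotation and braid relations, some letter $B_{k-1}$ appears exactly once, and the Markov relation removes the $k$-th strand. This needs the categorified statement that the closure of $X\,B_{k-1}$ with $X$ not involving strand $k$ is isomorphic, via MOY foams (zips and unzips around the outer loop), to the closure of $X$. Verifying that these isomorphisms are indeed composites of the listed generators is the step I expect to require the most care.
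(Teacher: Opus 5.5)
\paragraph{Comparison with the paper's route.} Your mechanism is the paper's: you realize each reduction step as a splitting $\mathrm{id}=\sum_j\iota_j\pi_j$ by MOY foams and push generation through it. The difference is that the paper takes both ingredients from the literature. The foams come from \cite[Lemma 9.2]{mackaay2009sl}, and all of them except those for the square relation are composites of MOY pieces; the disoriented digon removal, for instance, is a zip composed with a birth of a $1$-labeled circle. Termination for elementary coherently oriented webs without the square relation is \cite[Corollary 4.2]{Rasmussen2006SomeDO}.

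\paragraph{The braid-relation step fails as written.} In reproving these yourself, the splitting $B_iB_{i+1}B_i\cong B_{i,i+1}\oplus B_i$ cannot be used. The summand $B_{i,i+1}$ is a web with a $3$-labeled edge, while a MOY foam is a composite of pieces whose source and target webs carry only labels $1$ and $2$. So neither the projection onto nor the inclusion of that summand is a MOY foam. The six-valent foam is not that summand; it is the composite $B_iB_{i+1}B_i\to B_{i,i+1}\to B_{i+1}B_iB_{i+1}$. Your fallback (``the summand is a direct summand of it'') is circular. The correct input is the exchange isomorphism $B_iB_{i+1}B_i\oplus B_{i+1}\cong B_{i+1}B_iB_{i+1}\oplus B_i$. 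Its matrix entries are, up to sign, the six-valent vertex, an end dot followed by a merge, a split followed by a start dot, and zero, all of which are MOY foams. It exhibits $\lrb{\mathrm{cl}(XB_iB_{i+1}B_iY)}$ as a MOY-foam retract of $\lrb{\mathrm{cl}(XB_{i+1}B_iB_{i+1}Y)}\oplus\lrb{\mathrm{cl}(XB_iY)}$.

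\paragraph{Termination is not established.} The exchange does not lower the number $m$ of thick edges, so your announced induction on $m$ cannot terminate, as you partly notice. It has to be replaced by the Jones--Ocneanu induction on the pair ($k$, number of occurrences of $B_{k-1}$). By induction on $k$, using the idempotent, exchange and far-commutativity relations, every word is a MOY-foam retract of a direct sum of words containing $B_{k-1}$ at most once. Cyclic rotation plus the Markov step then removes a strand. Your sketch (``after cyclic rotation and braid relations, some letter $B_{k-1}$ appears exactly once'') leaves out the idempotent relation and never fixes a decreasing measure. This missing piece is exactly the content the paper imports from Rasmussen.

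\paragraph{The Markov step is misidentified.} It is not the ordinary digon relation $B_iB_i\cong B_i\{1\}\oplus B_i\{-1\}$ but the disoriented digon (curl) removal. It gives $N-1$ grading-shifted copies of the smaller closure, not an isomorphism with $\lrb{\mathrm{cl}(X)}$. That difference is harmless for generation. However, its realization by MOY foams (zip plus birth) is the second point you explicitly leave unverified, and the paper supplies it.
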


\begin{proof}
    This is a consequence of the categorified MOY calculus \cite{mackaay2009sl}. More precisely, \cite[Lemma 9.2]{mackaay2009sl} realizes the isomorphisms between state spaces related by local relations in MOY calculus via foams. Rasmussen \cite[Corollary 4.2]{Rasmussen2006SomeDO} proves that elementary coherently oriented $\gl_N$ webs can be evaluated via MOY calculus without using the square relation described in \cite[Lemma 9.2(3)]{mackaay2009sl}. All other foams in \cite[Lemma 9.2]{mackaay2009sl} are compositions of the local pieces we allow for MOY foams. For instance, the disoriented digon removal relation in \cite[Lemma 9.2(2)]{mackaay2009sl} is realized by the composition of a zip foam and a birth foam (that creates a $1$-labeled circle).
\end{proof}

\subsubsection*{Soergel calculus}

Soergel calculus of type $A_{n-1}$ is a graphical
description of the $2$-category of Soergel bimodules, which categorifies the Hecke algebra for the symmetric group $S_n$. For any
$N\ge 2$, this $2$-category admits a $2$-functor to a certain monoidal subcategory of $\gl_N$ webs and foams with $2n$ boundary
points with suitable orientations; see, e.g., \cite{mackaay2010diagrammatic}. For us, Soergel calculus will only be used as a shorthand notation for foam calculations, following \cite{morrison2022invariants,stroppel2024braiding}.

In the $A_{n-1}$ calculus, we define the strictly pivotal and strictly monoidal $\Bbbk$-linear category $\mathcal{D}_n^{\textrm{free}}$, freely generated by $n-1$ Frobenius algebra objects $(c_i,m_i,\Delta_i,\varepsilon_i,\eta_i),\, i=1,\dots,n-1$, monoidal product $\circ_1$, and additional generating morphisms \begin{align*}
c_i\circ_1 c_j\circ_1 c_i&\to c_j\circ_1 c_i\circ_1 c_j,\, |i-j|=1,\\
c_i\circ_1c_k&\to c_k\circ_1 c_i,\, |i-k|>1,\\
x_k\colon\mathbf{1}&\to\mathbf{1},\,k=1,\dots,n.
\end{align*}These data are graded; see \cite[Definition 4.1]{stroppel2024braiding}.

We now present a diagrammatic description of the category $\mathcal{D}_n^{\textrm{free}}$. We denote the generating objects by colors $\{{\color{red}\textrm{red}},\,{\color{blue}\textrm{blue}},\,{\color{green}\textrm{green}},\cdots\}$; the generating morphisms are depicted as follows: \[\includegraphics{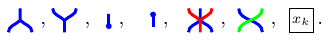}\]Here the first six diagrams are called \textit{merge}, \textit{split}, \textit{start dot}, \textit{end dot}, \textit{six-valent vertex}, and \textit{four-valent vertex}, respectively. The diagrams are read from bottom to top, the monoidal structure $\circ_1$ is displayed by horizontal juxtaposition, and the composition of morphisms is displayed by vertically stacking planar diagrams. To obtain the diagrammatic description of Soergel bimodules of type $\mathfrak{sl}_n$, we further impose relations on morphisms; see \cite[Definition 4.4, Remark 4.7]{stroppel2024braiding} for a complete list.

We now interpret webs and foams via diagrams in Soergel calculus. In the $A_{n-1}$ calculus, an object corresponds to an $n$-strand web as follows. The generating object $c_i$ corresponds to the $\gl_N$ web given by $n$ parallel coherently oriented strands with the $i$-th and $(i+1)$-th strands replaced by the web with a thick edge (usually displayed in the $i$-th color), and tensor product of generating objects corresponds to horizontal juxtaposition: \[\includegraphics{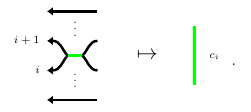}\]

MOY foams are exactly those foams that can be represented diagrammatically in Soergel calculus, which we now explain. Start dots and end dots correspond to zip and unzip foams, respectively. Merge and split correspond to digon creation and annihilation foams, respectively. The six-valent vertex corresponds to the foam depicted in Figure~\ref{fig:starfoam}. The four-valent vertex corresponds to the foam depicted in Figure~\ref{fig:fourvalentfoam}. The $x_k$ morphism corresponds to a dot on the $k$-th strand. 
\subsubsection*{Cobordism maps via Soergel calculus}
Soergel calculus provides a convenient language for describing cobordism maps. For our purposes, we record some maps induced by Reidemeister moves, cf. \cite[Sections 3.4 and 3.5]{morrison2022invariants}. We also record the atomatic slide map introduced in \cite[Proposition 6.3]{stroppel2024braiding}.
\begin{prop}\label{prop:RIImove}
    The maps induced by Reidemeister II moves are given as follows.
    \[\RIImove\]
\end{prop}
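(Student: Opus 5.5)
The plan is to verify the formulas in Proposition~\ref{prop:RIImove} directly from the definition of the Reidemeister II chain maps in the foam setting. We then translate each foam into Soergel calculus using the dictionary of Section~\ref{sec:bg MOY foams}, following \cite[Sections 3.4 and 3.5]{morrison2022invariants}.

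\textbf{Step 1: the complex.} Consider a coherently oriented two-strand tangle consisting of a positive crossing followed by a negative crossing (or the reverse). Its cube of resolutions has four vertices:
\begin{itemize}
\item the $(0,0)$, $(0,1)$, $(1,0)$ and $(1,1)$ resolutions, which are identity strands, a single thick edge (twice), and a thick digon, respectively.
\end{itemize}
In Soergel language, taking the red color for the generator $c_1$, these objects are $\mathbf{1}$, $c_1$, $c_1$ and $c_1\circ_1 c_1$. The edge maps are zip and unzip foams, which are start and end dots on the red strand, with the signs $(-1)^{s(u,v)}$ recorded.

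\textbf{Step 2: Gaussian elimination.} The digon object $c_1\circ_1 c_1$ decomposes as $c_1\{1\}\oplus c_1\{-1\}$. The inclusions and projections are realized by the merge and split vertices, decorated by dots (the polynomial $x_1$ or $x_2$) placed in the region between the strands. This is exactly the categorified digon removal of \cite[Lemma 9.2]{mackaay2009sl}, which is available for MOY foams by Lemma~\ref{lem:MOY foam generates eco webs}. We then cancel the isomorphic summands against the two $c_1$ vertices, using the composite of a dot and a merge (resp.\ split and dot) as the invertible components. The resulting deformation retraction onto $\mathbf{1}$ in homological degree $0$ has explicit inclusion and projection maps. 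These give the Reidemeister II maps in both directions: an identity term plus a correction term, namely a broken red strand (start dot over end dot) with a merge/split composed through the digon.

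\textbf{Step 3: matching conventions.} We check that the result matches the pictures in the statement, including the signs and which dot decorations appear. This uses the Soergel relations listed in \cite[Definition 4.4]{stroppel2024braiding}, such as barbell and polynomial forcing, to normalize the answer. Finally, we appeal to functoriality \cite{ehrig2018functoriality}: the Reidemeister II maps are determined up to homotopy (and up to a global sign, which is fixed by requiring that the composite of the two directions is the identity on $\mathbf{1}$).

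\textbf{Main obstacle.} I expect the main difficulty to be sign and grading bookkeeping. The sign conventions for zip and unzip foams, the cube signs, and the normalization of the merge/split idempotents in \cite{stroppel2024braiding} must all agree, since later twist and slide computations depend on these exact formulas. The practical approach is to compare directly with \cite[Sections 3.4, 3.5]{morrison2022invariants}, adjusting conventions as needed.
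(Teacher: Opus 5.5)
The paper gives no proof of this proposition: it records the Reidemeister II maps from \cite[Sections 3.4 and 3.5]{morrison2022invariants}. Rederiving them by Gaussian elimination is a reasonable self-contained alternative. As written, however, your Step~1 sets up the wrong complex.

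\textbf{The cube is misidentified.} In the Reidemeister II tangle one crossing is positive and the other negative, and the $0$-resolutions of the two signs are of opposite type. In the paper's convention (explained right after the statement), the $0$-resolution of the negative crossing is the parallel one and that of the positive crossing is the thick edge. Hence the all-$0$ and all-$1$ vertices are each a single thick edge $c_1$, sitting in homological degrees $-1$ and $+1$. The identity $\mathbf{1}$ and the digon $c_1\circ_1 c_1$ are the two mixed vertices, both in degree $0$. The cube you list, with $\mathbf{1}$ at $(0,0)$ and $c_1\circ_1 c_1$ at $(1,1)$, is the cube of two crossings of the \emph{same} sign, i.e.\ of $\sigma_1^{\pm 2}$. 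That complex is not homotopy equivalent to $\mathbf{1}$, and from it there is no retraction onto $\mathbf{1}$ in degree $0$. So your Steps~1 and~2 contradict each other.

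\textbf{Step 2 works once the cube is fixed.} The differential into the digon is a start dot on one thick strand, and the differential out of it is an end dot on the other. By the unit and counit relations, merge$\circ$(start dot) and (end dot)$\circ$split are identities, so the two summands $c_1\{\pm 1\}$ of the digon cancel against the two outer vertices. This decomposition is a Soergel relation valid over $\Z$ using $x_1$ or $x_2$; it does not come from Lemma~\ref{lem:MOY foam generates eco webs}, which is about generating state spaces.

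You can bypass most of the bookkeeping. Degree-$0$ morphisms $\mathbf{1}\to c_1\circ_1 c_1$ form a rank-one group spanned by the cup (a split on top of a start dot). So any chain map from $\mathbf{1}$ has components $(a\,\mathrm{id},\pm a\,\mathrm{cup})$, with the relative sign forced by the cube signs $(-1)^{s(u,v)}$. Dually, a chain map to $\mathbf{1}$ has a cap as its digon component. The correction terms are therefore a cup and a cap with no polynomial decorations, not a broken strand.

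\textbf{The sign normalization does not work.} If $\iota,\pi$ are the two Reidemeister II maps with $\pi\circ\iota=\mathrm{id}$, then $-\iota,-\pi$ satisfy the same condition, so requiring the composite to be the identity pins down nothing. The sign must be fixed by comparison with the functorial theory of \cite{ehrig2018functoriality}, which determines the map only up to homotopy and sign for such simple tangles. The paper fixes the sign by looking at the oriented resolution (cf.\ the proof of Proposition~\ref{prop:flip2strands}), i.e.\ by requiring the $\mathbf{1}\to\mathbf{1}$ component to be $+\mathrm{id}$.
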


We also use Proposition~\ref{prop:RIImove} to illustrate our notation. The upper left tangle has two crossings: the left one is negative and the right one is positive. The upper right diagram is the cube of resolutions of the tangle, depicted using Soergel calculus. The leftmost diagram in the upper right resolves both crossings to $0$-resolutions, and the diagram reads as the same order as the crossings: the left blue dot indicates the trivial (parallel) resolution of the left crossing, and the right blue object indicates the nontrivial resolution of the right crossing. The differentials are zip and unzip foams, which are indicated by start and end dots in the Soergel calculus. 

\begin{prop}\label{prop:RIIImove}
    The maps induced by Reidemeister III moves are given as follows.

    \[\RIIImovea\]

    \[\RIIImoveb\]
\end{prop}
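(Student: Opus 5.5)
This statement records the Reidemeister III maps from \cite{morrison2022invariants}, so I would derive it rather than re-verify it from scratch. The plan is to build the RIII map as a composite of Reidemeister II maps and a Gaussian elimination, following \cite[Section 3.5]{morrison2022invariants}, and then translate each piece into Soergel calculus using the dictionary of Section~\ref{sec:bg MOY foams}.

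\textbf{Step 1 (cubes of resolutions).} Both sides of the move are three-strand braid words, $\sigma_1\sigma_2\sigma_1$ and $\sigma_2\sigma_1\sigma_2$, with the crossing signs dictated by the two displayed versions. I would write each cube of resolutions as a cube of objects in $\mathcal{D}_3^{\textrm{free}}$. The resolutions are words in the red and blue generators $c_1,c_2$. The edge maps are start and end dots, with the signs $(-1)^{s(u,v)}$.

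\textbf{Step 2 (simplify via the RI relation).} The vertex $c_1\circ_1 c_2\circ_1 c_1$ decomposes as $(c_1\circ_1 c_2\circ_1 c_1)' \oplus c_1$, using the idempotent decomposition built from the six-valent vertex and the merge/split maps. I would apply Gaussian elimination to the $c_1$ summand against the adjacent $c_1$ vertex. This cancellation is standard and is the key idea of \cite{morrison2022invariants}.

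\textbf{Step 3 (match the two sides).} After Step 2, both simplified complexes have the same shape. The identification of the top vertices $c_1c_2c_1 \leftrightarrow c_2c_1c_2$ is given by the six-valent vertex. On all other vertices the identification is the identity, up to the relabeling of positions. The RIII map is then the composite
\[
\text{inclusion of the simplified complex} \;\circ\; \text{six-valent vertex} \;\circ\; \text{projection},
\]
with the correction terms produced by the perturbation lemma.

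\textbf{Step 4 (compare with the displayed diagrams).} I would check that these correction terms give exactly the diagrams displayed in the proposition. For the correction terms, the needed identities are the dot-slide rule and the relation in \cite[Definition 4.4]{stroppel2024braiding} that expresses a dot on a six-valent vertex.

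\textbf{Main obstacle.} The main obstacle is sign and normalization bookkeeping. Two things must agree: the cube sign convention $s(u,v)$ used here, and the sign convention in \cite{morrison2022invariants}. In addition, the result must match the canopolis functoriality of \cite{ehrig2018functoriality}, which is determined only up to a global sign. I would handle this by fixing the global sign via the RII maps of Proposition~\ref{prop:RIImove}, since RIII can be expressed as a composite of RII moves together with the braid relation. I would then verify a single vertex component, for instance the all-$0$ one, directly.
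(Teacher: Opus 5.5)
Your outline matches how such maps are obtained. The paper gives no argument of its own here: it records the maps of \cite[Section~3.5]{morrison2022invariants}, and the usual recipe (cf.\ \cite{elias2010rouquier}) is to Gaussian-eliminate both cubes, match the minimal complexes, and compose inclusion, isomorphism and projection. As written, however, two steps do not go through.

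\textbf{Step~2 is incomplete, so Step~3 fails.} Writing juxtaposition for $\circ_1$, the neighbours of $c_1c_2c_1$ in the cube are $c_1c_2$, $c_1c_1$ and $c_2c_1$; there is no adjacent ``$c_1$ vertex''. You need the following three moves:
\begin{enumerate}[(i)]
\item split $c_1c_1\cong q\,c_1\oplus q^{-1}c_1$;
\item cancel the $c_1$ summand of $c_1c_2c_1$ against one of these two summands;
\item cancel the remaining summand against one of the two one-thick vertices equal to $c_1$, after a change of basis.
\end{enumerate}
With only the cancellation in (ii), the left side has levels $(c_1c_2c_1)'$, $c_1c_2\oplus c_1\oplus c_2c_1$, $c_1\oplus c_2\oplus c_1$, $\mathbf{1}$. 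The right side has the roles of $c_1$ and $c_2$ exchanged in the two middle levels, so the shapes do not match and your claim after Step~2 is false. Only after both cancellations do both sides reach the colour-symmetric complex $(c_1c_2c_1)'$, $c_1c_2\oplus c_2c_1$, $c_1\oplus c_2$, $\mathbf{1}$. The zigzag terms from the second cancellation are among the correction terms you must match in Step~4.

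\textbf{Your sign-fixing argument is circular.} Reidemeister~III is not a composite of Reidemeister~II moves: the ``braid relation'' you invoke \emph{is} the RIII move. So Proposition~\ref{prop:RIImove} cannot determine its sign.

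What is true is that the three-strand braid is simple in the sense of \cite[Lemma~4.6]{ehrig2018functoriality}. Your Gaussian-elimination map therefore agrees with the canopolis map up to homotopy and an overall sign, and that sign must come from a normalization.

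Your Step~3 choice (identity away from the top vertex) makes the map the identity on the oriented, all-parallel resolution. The cancellations do not touch that vertex, so the inclusion and projection are the identity there. What remains is to confirm that this is the normalization of the canopolis map. One way is to close up and compare on oriented resolutions, as the paper does for RII in the proof of Proposition~\ref{prop:flip2strands}.

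Checking this single component suffices because no homotopy can change it. Any composite $\mathbf{1}\to c_i\to\mathbf{1}$ lies in the ideal generated by $x_i-x_{i+1}$, which has no nonzero degree-$0$ elements.
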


\begin{prop}[{\cite[Proposition 6.3]{stroppel2024braiding}}]
    \label{prop:atomicslide}
    There are chain maps

    \[\slide_{\one_1,B_1}:= \hspace{-.5cm}\FigSlide,\quad \slide_{B_1,\one_1}:=\hspace{-.5cm} \FigSlidee \]

    which are invertible up to homotopy. The inverses are given by the chain maps
    \[\slide^{-1}_{\one_1,B_1}:= \hspace{-.5cm}\FigSlidei, \quad
        \slide^{-1}_{B_1,\one_1}:= \hspace{-.5cm}\FigSlideei.\]
\end{prop}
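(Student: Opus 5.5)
Since this proposition is quoted from \cite[Proposition 6.3]{stroppel2024braiding}, the plan is to re-verify it directly in the diagrammatic Soergel calculus, taking as given the relations listed in \cite[Definition 4.4, Remark 4.7]{stroppel2024braiding}. Both source and target of each slide map are small Rouquier-type complexes. One is the complex of a crossing, i.e.\ a two-term complex built from $\one$ and $B$ with a start or end dot as differential, placed next to a strand carrying a $B_1$. The other is the same complex with $B_1$ moved to the other side of the crossing. In each homological degree the slide map is a sum of explicit Soergel diagrams: identities, four- or six-valent vertices, merges and splits, and possibly dot terms with polynomial coefficients. The proof therefore has two independent parts, and I would treat them separately.

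\textbf{Chain map condition.} In each homological degree I would expand both composites $d\circ\slide$ and $\slide\circ d$ as linear combinations of Soergel diagrams. I would then reduce them to a common normal form using the following relations:
\begin{itemize}
\item the Frobenius relations (unit, counit, associativity, needle $=0$);
\item the polynomial forcing (dot-sliding) relation $x_k$ past a strand of color $i$;
\item the two-color relations: the Zamolodchikov-type relation expressing a dot on a six-valent vertex as a sum of diagrams, and the four-valent slide relations.
\end{itemize}
Because the complexes have at most two terms on each side, only a handful of squares need checking. In each, the identity should follow from one application of the dot-on-six-valent-vertex relation (the ``two-color associativity'' relation) together with a barbell/polynomial forcing computation. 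The sign conventions for $d$ (the $(-1)^{s(u,v)}$ rule) must be tracked carefully here.

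\textbf{Homotopy inverse.} Next I would compute $\slide^{-1}\circ\slide$ and $\slide\circ\slide^{-1}$ degree by degree and exhibit explicit homotopies $h$ with $\idd-\slide^{-1}\slide=dh+hd$. A natural guess is that $h$ is a single diagram going down one homological degree: a split followed by an end dot, or the dual diagram. The required identity should reduce to the one-color relation
\[
\idd_{B_1\circ_1 B_1}=\text{(split--merge)}+\text{(dot terms)}
\]
together with the decomposition $B_sB_s\cong B_s(1)\oplus B_s(-1)$ in polynomial form. Alternatively, and more conceptually, I would Gaussian-eliminate the contractible summands in the tensor product complex. Both sides then reduce to the same minimal complex, and the slide maps become the induced isomorphisms. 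This yields invertibility up to homotopy without guessing $h$, and the explicit formulas can afterwards be read off as the zigzag maps of the elimination.

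\textbf{Main obstacle.} I expect the hardest step to be the degree where the six-valent vertex interacts with dots, namely verifying $\slide^{-1}\slide\simeq\idd$ in the middle degree. There the composite is a genuinely two-colored diagram. Reducing it to identity-plus-null-homotopic terms requires the full two-color relations, and there is real risk of sign and coefficient errors (e.g.\ $x_i$ versus $x_{i+1}$ coefficients). I would first try to handle this by rewriting via the Gaussian elimination picture above, which localizes the problem to a single known isomorphism $B_iB_{i+1}B_i\cong B_{i,i+1}\oplus B_i$.
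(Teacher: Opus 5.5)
The paper gives no proof of this statement; it is imported from \cite[Proposition 6.3]{stroppel2024braiding}. A direct verification in Soergel calculus is therefore the right kind of argument, but you have set up the wrong complexes.

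The slide maps are the naturality maps for the braiding of one strand ($\one_1$) past a two-strand object ($B_1$). So that strand crosses \emph{both} strands carrying $B_1$. Source and target are Rouquier complexes of two elementary crossings on three strands, i.e.\ four-vertex cubes spread over three homological degrees, tensored with a Bott--Samelson generator. Schematically (up to the order of colors and shifts), one side is $B_2B_1B_2\to B_1B_2\oplus B_2B_2\to B_2$ and the other is $B_1B_2B_1\to B_1B_1\oplus B_1B_2\to B_1$.

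The paper's later use confirms this. Sliding the three-crossing half twist $\Delta_3$ through a thick edge is one slide map plus one fork twist (Proposition~\ref{prop:slide3strandtwist}). The four-crossing shuffle $\Delta_{2,2}$ takes two slide maps (Proposition~\ref{prop:slide22shuffle}). Your two-term picture also conflicts with your own appeal to six-valent vertices and to $B_iB_{i+1}B_i\cong B_{i,i+1}\oplus B_i$, which only occur in the two-crossing setting.

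This matters most for invertibility up to homotopy, which is the real content of the statement. In the top degree the slide map would be a degree-zero morphism between $B_2$ and $B_1$. There are none, so it vanishes there; the paper uses exactly this in the proof of Proposition~\ref{prop:slide22shuffle}. Hence $\slide$ is not a degreewise isomorphism, and $\idd-\slide^{-1}\circ\slide$ equals $\idd_{B_2}$ in the top degree.

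Consequently, any homotopy $h$ must contain the split $B_2\to B_2B_2$, whose composite with the end-dot differential is the identity. It also needs a nonzero component from the middle to the bottom degree. There the components of $\slide$ and $\slide^{-1}$ are six-valent vertices, and their composite differs from $\idd_{B_2B_1B_2}$ by a term factoring through $B_2$, which must equal $h\circ d$.

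So the dot-on-six-valent-vertex relation you cite suffices for the chain-map check. The homotopy check, however, needs the two-color relation for a composite of two six-valent vertices, together with the one-color splitting of $B_2B_2$. A single-diagram $h$ cannot work. (Zamolodchikov relations involve three colors and play no role here.)

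Your Gaussian-elimination alternative is fine for existence. Splitting $B_sB_s\cong B_s(1)\oplus B_s(-1)$ and $B_sB_tB_s\cong B_{sts}\oplus B_s$ collapses both sides to the same two-term minimal complex built from $B_{w_0}$. But it only yields \emph{some} homotopy equivalence. To prove the proposition as stated, you must still check three things:
\begin{enumerate}
\item the displayed maps are chain maps;
\item they induce an isomorphism on the minimal complex;
\item $\slide^{-1}$ induces its inverse.
\end{enumerate}
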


\section{The flip symmetry}\label{sec:bg flip}

In \cite{chen2025flip}, Chen and the author define the flip map on the Khovanov chain complex of a link diagram, and prove that it is a chain map and a link invariant up to homotopy. A large part of this construction is formal, and we generalize it to $\gl_N$ homology in this section.

Let $D$ be a diagram of an oriented link $K$ in $\RR^3$. We think of $D$ as lying on the $xy$-plane. Its \textit{flip diagram} $D^*$ is another diagram of $K$ obtained by rotating $K$ in $\RR^3$ by $\pi$ around an axis in the $xy$-plane and projecting it down to the same plane. From a planar perspective, $D^*$ is obtained by taking the mirror of $D$ and switching all the crossings. The two diagrams $D$ and $D^*$ are related by the cobordism in $I\times\RR^3$ obtained as the trace of the rotation, which we call the \textit{flip cobordism}. For definiteness, we always assume that the axis of rotation is the $x$-axis and that the rotation is clockwise when viewed from the positive $x$-axis; choosing other conventions will yield the same map up to homotopy \cite[Remark 3.2]{chen2025flip}. 

We now construct the maps \[\rho,\,\eta\colon\lrb{D}\to\lrb{D^*}\]mentioned in the introduction. The map $\rho$ is a chain homotopy equivalence induced by the flip cobordism, which is well-defined up to chain homotopy \cite{ehrig2018functoriality}.

We now define the map $\eta$. For a planar $\gl_N$ web $\Gamma$, let $\Gamma^*$ be the planar $\gl_N$ web obtained by rotating $\Gamma$ by $\pi$ around the $x$-axis; it is the same as the mirror of $\Gamma$ since $\Gamma$ is planar. By the universal construction, $\lrb{\Gamma}$ is generated by foams in the lower half space $\{z\ge 0\}$ with boundary $\Gamma$. Reflecting such foams across the $xz$-plane yields generators of $\lrb{\Gamma^*}$. To check that this construction descends to a well-defined map \[\eta_\Gamma\colon\lrb{\Gamma}\to\lrb{\Gamma^*},\]it suffices to prove the following lemma.

\begin{lem}\label{lem:reflecting closed foams}
    Let $F$ be a closed $\gl_N$ foam, and let $F^*$ be the closed $\gl_N$ foam obtained by reflecting $F$ across a plane in $\RR^3$. Then $\langle F^*\rangle=\pm\langle F\rangle$. In particular, $\langle F^*\rangle=0$ if and only if $\langle F\rangle=0$.
\end{lem}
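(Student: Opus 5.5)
The plan is to compare the Robert--Wagner evaluations coloring by coloring. Reflection $r$ across a plane induces a bijection between facets, seams and singular vertices of $F$ and $F^*$, preserving labels and decorations. Hence a coloring $c$ of $F$ gives a coloring $c^*=c\circ r^{-1}$ of $F^*$, and the flow condition is preserved. This is a bijection between the colorings of $F$ and those of $F^*$, so it suffices to show
\[
\langle F^*,c^*\rangle=\epsilon\,\langle F,c\rangle
\]
for a sign $\epsilon$ that does not depend on $c$.

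First I check the unsigned parts.

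\begin{itemize}
\item $P(F^*,c^*)=P(F,c)$, since decorations and colorings are transported unchanged.
\item The monochrome and bichrome surfaces satisfy $F^*_i(c^*)=r(F_i(c))$ and $F^*_{ij}(c^*)=r(F_{ij}(c))$. Their Euler characteristics agree, so $Q(F^*,c^*)=Q(F,c)$ and the first summand of $s$ is unchanged.
\end{itemize}

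Next I handle the orientations. The reflection reverses the ambient orientation. The facet orientations in $F^*$ are the pushforwards of those in $F$, because this is how the mirror foam is defined as a $\gl_N$ foam; I will check that this is compatible with the seam orientation conventions. The key point is the cyclic order of the three facets around a seam. This order is determined by the ambient orientation together with the seam orientation, so it is reversed under $r$. Consequently a seam that is positive for $i<j$ in $F$ becomes negative in $F^*$, and vice versa, which gives
\[
\theta^+_{ij}(F^*,c^*)=\theta^-_{ij}(F,c).
\]
I expect this orientation bookkeeping to be the main obstacle. If the convention instead reverses all facet and seam orientations, the cyclic order is preserved and the sign discussion is trivial, so I would settle this first.

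It remains to show that $\theta^+_{ij}-\theta^-_{ij}\equiv \theta_{ij}\pmod 2$ contributes a sign independent of $c$. Since $\theta^-=\theta-\theta^+$, the sign change is $(-1)^{\sum_{i<j}\theta_{ij}(F,c)}$. The needed input is a result of Robert--Wagner: the parity of $\theta_{ij}(F,c)$ is tied to the Euler characteristics, namely $\theta_{ij}\equiv \chi(F_i)/2+\chi(F_j)/2+\chi(F_{ij})/2 \pmod 2$, or some such identity from their proof that the evaluation is well defined under recoloring/Kempf moves.

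If that identity is not available in a clean form, I would argue as follows. Two colorings differing by a Kempe move along a bichrome component change $\sum\theta_{ij}$ by an even number. Kempe moves connect the colorings, up to colorings contributing zero, so $\epsilon$ is constant.

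Either way, $\langle F^*\rangle=\epsilon\langle F\rangle$ with $\epsilon=\pm1$, and the vanishing statement follows.

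A cleaner fallback avoids the parity identity. The sign $(-1)^{\theta}$ with $\theta=\sum_{i<j}\theta_{ij}$ can be absorbed by relating $\langle F^*\rangle$ to $\langle F\rangle$ with the reversed ordering of $\{1,\dots,N\}$. Under the order reversal $i\mapsto N+1-i$, positive and negative seams swap, and the factors $X_i-X_j$ flip sign. Since $\langle F\rangle$ is symmetric, relabeling the variables does not change it. The remaining factors of $-1$ come from $\chi(F_{ij})/2$ and from the first summand $i\cdot\chi(F_i)/2$ under $i\mapsto N+1-i$. These total a global sign computable from $\chi(F_i)$, and $\sum_i\chi(F_i)$ is determined by $F$ independently of $c$.
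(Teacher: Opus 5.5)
\textbf{Verdict: genuine gap.} The one step that carries content is asserted but not proved.

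Your reduction is the same as the paper's. Colorings of $F$ and $F^*$ correspond, and $P$, $Q$ and the monochrome part of $s$ are unchanged. Reflection swaps positive and negative seams, so $\langle F^*,c^*\rangle=(-1)^{\sum_{i<j}\theta_{ij}(F,c)}\langle F,c\rangle$.

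\textbf{The gap.} You never show that the parity of $\sum_{i<j}\theta_{ij}(F,c)$ is independent of $c$.
\begin{itemize}
\item The identity you guess is actually correct. Via $\chi(F_i)+\chi(F_j)=\chi(F_{ij})+2\chi(F_{i\cap j})$ it is equivalent to Robert--Wagner's $\theta_{ij}\equiv\chi(F_{i\cap j}(c))$. But it does not finish the argument, because the individual $\chi(F_{ij}(c))$ depend on $c$. For example, with $N\ge 3$ and a $1$-labeled sphere colored $\{a\}$, $\chi(F_{12})$ is $2$ if $a\in\{1,2\}$ and $0$ otherwise. ``Either way'' skips exactly this point.
\item The Kempe-move alternative rests on two unproved claims. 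One is that the parity is invariant under Kempe moves. The other is that all colorings are connected by Kempe moves up to zero contributions, which is not available in this generality.
\item Your fallback has the same hole. After the order reversal $i\mapsto N+1-i$, the leftover exponent is $(N+1)\sum_i\chi(F_i)/2+\sum_{i<j}\chi(F_{ij})/2$. The second summand comes from $Q$ and is \emph{not} computable from the $\chi(F_i)$.
\end{itemize}

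\textbf{The missing idea: a cell count.} Choose a cell structure on $F$ compatible with the stratification. Let $c(\sigma)$ be the color of the stratum containing a cell $\sigma$, taking the union color for seams and singular vertices. Then $\sigma$ lies in $\overline{F_{i\cap j}(c)}$ exactly when $\{i,j\}\subset c(\sigma)$, and exactly $\binom{\ell(\sigma)}{2}$ pairs satisfy this. Hence
\[
\sum_{i<j}\chi(F_{i\cap j}(c))=\sum_{\sigma}(-1)^{\dim\sigma}\binom{\ell(\sigma)}{2},
\]
which depends only on the labels.
\begin{itemize}
\item Combined with $\theta_{ij}\equiv\chi(F_{i\cap j}(c))$, this closes your main route. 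This is precisely the paper's argument.
\item The same count also repairs your fallback, using $\sum_i\chi(F_i(c))=\sum_\sigma(-1)^{\dim\sigma}\ell(\sigma)$ and the relation $\chi(F_i)+\chi(F_j)=\chi(F_{ij})+2\chi(F_{i\cap j})$ summed over $i<j$. The repaired fallback is a genuinely different proof: it replaces Robert--Wagner's parity lemma by the symmetry of $\langle F^*\rangle$ under relabeling the variables.
\end{itemize}
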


\begin{proof}
    Recall from Section~\ref{sec:bg gln homology} that the evaluation of a closed $\gl_N$ foam $F$ takes the form \[\langle F\rangle=\sum_c\langle F,c\rangle,\]where the sum is over all colorings of $F$, and \[\langle F,c\rangle=(-1)^{s(F,c)}\frac{P(F,c)}{Q(F,c)}.\]
    
    The reflection induces an obvious bijection between the colorings of $F$ and $F^*$. For a coloring $c$ of $F$, let $c^*$ be the corresponding coloring of $F^*$. We prove that there is a \textit{universal} $\epsilon\in\{\pm 1\}$, independent of the coloring, such that \[\langle F^*,c^*\rangle=\epsilon\langle F,c\rangle,\]which implies the lemma. The polynomial $P(F,c)$ is defined as a product of the decorations on facets, which are symmetric polynomials. Since the reflection preserves the decorations, we have \[P(F,c)=P(F^*,c^*).\]The rational function $Q(F,c)$ is defined as a product of $(X_i-X_j)^{\chi(F_{ij}(c))/2}$. The reflection preserves the bichrome surfaces $F_{ij}(c)$, so \[Q(F^*,c^*)=Q(F,c).\]
    
    It remains to compare the signs. The reflection also preserves the monochrome surfaces, so the first term in $s(F,c)$ is preserved under reflection. Reflection changes cyclic orders and identifies positive curves in $F^*$ with negative curves in $F$. Therefore, we have 
    \begin{align*}
        s(F,c)-s(F^*,c^*)&=\sum_{1\le i<j\le N}\left(\theta_{ij}^+(F,c)-\theta_{ij}^-(F,c)\right)\\ &\equiv \sum_{1\le i<j\le N}\theta_{ij}(F,c)\pmod 2\\ &\equiv \sum_{1\le i<j\le N}\chi(F_{i\cap j}(c))\pmod 2.
    \end{align*}Here $F_{i\cap j}(c)$ denotes the union of all facets whose colorings contain both $i$ and $j$, and the last line follows from \cite[Lemma 2.7]{robert2020closed}. The latter sum only depends on the labels of the facets: we choose a cellular structure on $F$ compatible with the stratification, and 
    \begin{align*}
        \sum_{1\le i<j\le N}\chi(F_{i\cap j}(c))&=\sum_{1\le i<j\le N}\left(\sum_{f}\mathbf{1}_{i,j\in c(f)}-\sum_{e}\mathbf{1}_{i,j\in c(e)}+\sum_{v}\mathbf{1}_{i,j\in c(v)}\right)\\
        &=\sum_{f}\left(\sum_{1\le i<j\le N}\mathbf{1}_{i,j\in c(f)}\right)-\sum_{e}\left(\sum_{1\le i<j\le N}\mathbf{1}_{i,j\in c(e)}\right)+\sum_{v}\left(\sum_{1\le i<j\le N}\mathbf{1}_{i,j\in c(v)}\right)\\
        &=\sum_{f}\binom{\ell(f)}{2}-\sum_{e}\binom{\ell(e)}{2}+\sum_{v}\binom{\ell(v)}{2}.
    \end{align*}
      Here $f$, $e$, $v$ range over the cells of dimension $2,1,0$, and $c(\cdot)$ (resp. $\ell(\cdot)$) denotes the coloring (resp. label) of the stratum containing the cell. Therefore, the sign here is independent of the coloring.
\end{proof}

There is an obvious identification between the crossings in $D$ and those in $D^*$ that preserves the positivity of crossings, so $n_\pm(D)=n_\pm(D^*)$. The map $\eta$ is an isomorphism of chain complexes, defined as a direct sum \[\eta\coloneqq h^{-n_+}q^{Nn_+-(N-1)n_-}\bigoplus_{v\in \{0,1\}^n}h^{|v|}q^{-|v|}\eta_{D_v}\colon\lrb{D}\to\lrb{D^*}.\]It is clear that $\eta$ is a chain map and preserves the cubical grading.

\begin{defn}
    Let $D$ be a diagram of an oriented link $K$. The \textit{flip map} is defined as the composition \[\kappa\coloneqq\eta^{-1}\circ\rho\colon\lrb{D}\xrightarrow{\rho}\lrb{D^*}\xrightarrow{\eta^{-1}}\lrb{D}.\]
\end{defn}

\begin{rem}
    Our definition of the flip map here is slightly different from that in \cite{chen2025flip}, where the flip map is defined as the composition $\rho^{-1}\circ\eta$. The two definitions are equivalent up to homotopy since $\kappa$ is a homotopy involution, cf. \cite[Lemma 3.5]{chen2025flip}.
\end{rem}

The following proposition shows that the flip map is a link invariant up to homotopy.

\begin{prop}\label{prop:flip invariance}
    Let $D_1$ and $D_2$ be two diagrams of the same oriented link $K$. Then the following diagram commutes up to homotopy:
    \[
        \tikzexternaldisable
            \begin{tikzcd}
                \lrb{D_1} \arrow[r, "\kappa_1"] \arrow[d, "\varphi"'] & \lrb{D_1} \arrow[d, "\varphi"] \\
                \lrb{D_2} \arrow[r, "\kappa_2"']                      & \lrb{D_2}
            \end{tikzcd}.
        \tikzexternalenable
    \]
    Here $\kappa_1$ and $\kappa_2$ are the flip maps on $\lrb{D_1}$ and $\lrb{D_2}$, respectively, and the vertical map $\varphi$ is a fixed chain homotopy equivalence induced by a sequence of Reidemeister moves.
\end{prop}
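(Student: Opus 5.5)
It suffices to treat a single Reidemeister move, since $\varphi$ is a composition of such moves and the square for a composite is obtained by stacking the squares for the individual moves. So let $D_2$ be obtained from $D_1$ by one Reidemeister move, supported in a disk $B$. The flip diagrams $D_1^*$ and $D_2^*$ then differ by the mirrored move $\varphi^*$, supported in the reflected disk. It is enough to show that both squares below commute up to homotopy:
\[\rho_2\circ\varphi\simeq\varphi^*\circ\rho_1,\qquad \eta_2\circ\varphi\simeq\varphi^*\circ\eta_1 .\]
Given these, $\kappa_2\circ\varphi=\eta_2^{-1}\rho_2\varphi\simeq\eta_2^{-1}\varphi^*\rho_1\simeq\varphi\,\eta_1^{-1}\rho_1=\varphi\circ\kappa_1$, which is the claim.

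\textbf{The $\rho$ square.} I would prove this using functoriality of $\gl_N$ homology \cite{ehrig2018functoriality}: isotopic link cobordisms in $I\times\RR^3$ induce homotopic maps. The composite $\rho_2\circ\varphi$ is induced by the Reidemeister movie followed by the trace of the rotation. The composite $\varphi^*\circ\rho_1$ is induced by the rotation followed by the rotated movie. Since the rotation is an ambient isotopy of $\RR^3$, conjugating the Reidemeister isotopy by it identifies these two movies up to isotopy of cobordisms rel boundary. Hence the two maps agree up to homotopy. The only point to check is that the rotation of $\RR^3$ carries the Reidemeister move to the mirrored move with crossings switched, which is exactly $\varphi^*$.

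\textbf{The $\eta$ square.} This is the step I expect to be the main obstacle. The map $\varphi$ is built locally from explicit foam maps between cube-of-resolution complexes: zips, births, saddles, and the homotopy data of the Gaussian elimination. The plan is to check that $\eta$ intertwines each local foam map with its reflected foam map. By Lemma~\ref{lem:reflecting closed foams}, reflecting a foam $F\colon\Gamma_1\to\Gamma_2$ gives $\eta_{\Gamma_2}\circ\lrb{F}=\pm\lrb{F^*}\circ\eta_{\Gamma_1}$, with a sign that depends only on the labels and the combinatorics of the foam. I would therefore fix the sign conventions for $\varphi^*$ so that it is the reflection of $\varphi$ term by term, and then verify that the reflection signs are compatible with the cube signs $(-1)^{s(u,v)}$ and with the degree shifts. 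Concretely, the sign in the proof of Lemma~\ref{lem:reflecting closed foams} is $\sum_f\binom{\ell(f)}{2}-\sum_e\binom{\ell(e)}{2}+\sum_v\binom{\ell(v)}{2}$ modulo $2$. For a foam with boundary, one tracks the analogous count relative to the boundary. I expect that $\eta_\Gamma$ can be normalized, via a sign depending only on $\Gamma$, so that these relative counts cancel for every foam appearing in $\varphi$. If this normalization fails globally, I would instead argue up to an overall sign $\pm1$ per move. Since $\kappa$ is conjugated by $\eta$ on both sides of the square, a move-dependent global sign cancels in $\eta_2^{-1}\varphi^*\eta_1$, provided it is independent of the resolution. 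Proving that independence is the delicate verification.

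Alternatively, as in \cite[Proposition 3.4]{chen2025flip}, the argument is formal once one knows that $\eta$ commutes with the local canopolis structure. Gluing $B$ to the complement, which both maps fix, reduces the $\eta$ square to the Reidemeister tangle complexes alone. There it becomes a finite check on the three moves, using the explicit maps of Propositions~\ref{prop:RIImove} and~\ref{prop:RIIImove}.
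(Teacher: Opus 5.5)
Your reduction to a single move matches what the paper intends; its proof just cites the argument of \cite[Theorem 3.8]{chen2025flip} together with the functoriality of \cite{ehrig2018functoriality}. Your $\rho$-square argument is also correct: ``move, then rotate'' and ``rotate, then rotated move'' are isotopic movies, so they induce homotopic maps. The gap is in the $\eta$-square. Over $\Z$ this is the only point that goes beyond \cite{chen2025flip}, and your proposal does not establish it.

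\emph{The fallback is wrong.} Suppose you only know $\eta_2\circ\varphi\simeq\epsilon\,\varphi^*\circ\eta_1$ for some $\epsilon=\pm1$. Then
\[\kappa_2\circ\varphi=\eta_2^{-1}\circ\rho_2\circ\varphi\simeq\eta_2^{-1}\circ\varphi^*\circ\rho_1\simeq\epsilon\,\varphi\circ\eta_1^{-1}\circ\rho_1=\epsilon\,\varphi\circ\kappa_1 .\]
Each composite contains a single $\eta^{-1}$, so nothing cancels. The proposition asserts commutativity up to homotopy, not up to sign, so you must actually prove $\epsilon=+1$. Over $\FF_2$ this is vacuous, which is why the issue never arose in \cite{chen2025flip}.

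\emph{The primary plan is unsupported.} You propose rescaling each $\eta_\Gamma$ by a sign depending only on $\Gamma$ so that all reflection signs of foams in $\varphi$ cancel. This is stated only as an expectation, and there is no general reason for it. The relevant parity is the quantity $\chi_2^-$ of Proposition~\ref{prop:flip elementary gln foam}, and it is not determined by the source and target webs. For example, it is $0$ for the identity on a thick edge but $1$ for unzip followed by zip. So it would have to be checked component by component on every Reidemeister map, which you have not done.

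\emph{A cleaner way to close the gap} avoids foam-by-foam bookkeeping.
\begin{enumerate}
\item Since $\eta$ is compatible with planar gluing and with the cube signs, $\psi:=\eta_2\circ\varphi\circ\eta_1^{-1}$ is the identity outside the reflected disk. It is therefore induced by a homotopy equivalence of the local tangle complexes of the mirrored move.
\item Those tangles are simple in the sense of \cite[Lemma 4.6]{ehrig2018functoriality}, as the paper itself uses in the proof of Proposition~\ref{prop:flip2strands}. Hence $\psi\simeq\pm\varphi^*$.
\item Fix the sign as at the end of that proof: close up suitably and compare on the oriented resolution. This is an unlink of $1$-labeled circles, where $\eta$ agrees with the topological identification (cf.\ Example~\ref{ex:flip unknot}).
\end{enumerate}
If you prefer explicit formulas instead, note that Propositions~\ref{prop:RIImove} and~\ref{prop:RIIImove} only cover braid-like moves. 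Reidemeister I and the remaining oriented versions of II and III would also have to be checked.
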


\begin{proof}
    This follows from an argument similar to \cite[Theorem 3.8]{chen2025flip} and the functoriality of $\gl_N$ link homology \cite{ehrig2018functoriality}.
\end{proof}

\begin{exmp}\label{ex:flip unknot}
    Let $U$ be the crossingless diagram of the unknot. The chain complex $\lrb{U}$ is a free $R$-module of rank $N$, concentrated in homological grading $0$, generated by $1,X,\dots,X^{N-1}$, where $X^k$ denotes the image of $1$ under the birth foam $\emptyset\to U$ with $k$ dots. The flip diagram $U^*$ is also crossingless, and we denote the generators of $\lrb{U^*}$ by $1,X^*,\dots,(X^*)^{N-1}$. By functoriality, \[\rho(X^k)=(X^*)^k=\eta(X^k)\]since there is no room for a homotopy. Therefore, the flip map $\kappa$ on $\lrb{U}$ is the identity map.
\end{exmp}

\section{The fork twist}\label{sec:flip 2 strands}

The filtration argument in \cite{chen2025flip} relies on the cup-sliding trick from \cite{rozansky2010categorification}, which becomes invalid when thick edges are present. In this section, we define and study maps induced by (a balanced version of) fork twists, cf. \cite[Equation (4.16)]{queffelec2016sln}. We then utilize these maps to study the minimal case where thick edges appear: the flip map on the $2$-strand braid with a single crossing. This will serve as the model case for our argument in Section~\ref{sec: flip n strand}.

\begin{defn}\label{defn:fork twist}
    The map induced by a positive fork twist is defined as follows: \[\tw\coloneqq\forktwist.\]Its inverse is defined as \[\tw^{-1}\coloneqq\forktwisti.\]
    Here we use the shorthand notation \begin{equation}\label{eqn:defn_black_dot}
        \begin{tikzpicture}[anchorbase,scale=.2]
            \draw[bl] (1,2.01) \dr (1.5,1.3) \ru (2,2.01) (1.5,-.01)\pu (1.5,1.3);
            \fill (1.1,1) circle (1.5mm);
        \end{tikzpicture}\coloneqq \begin{tikzpicture}[anchorbase,scale=.2]
            \draw[bl] (1,2.01) \dr (1.5,1.5) \ru (2,2.01) (1.5,-.01)\pu (1.5,1.01);
            \fill[bl] (1.5,1) circle (2.5mm);
        \end{tikzpicture}-\begin{tikzpicture}[anchorbase,scale=.2]
            \draw[bl] (1,2.01) \pd (1,1-.01)  (2,2.01)\pd (2,-.01);
            \fill[bl] (1,1) circle (2.5mm);
        \end{tikzpicture}.
    \end{equation}We require this to be rotationally invariant. Rotating (\ref{eqn:defn_black_dot}) by $2\pi/3$ yields \begin{equation}
        \begin{tikzpicture}[anchorbase,scale=.2]
            \draw[bl] (1,2.01) \dr (1.5,1.3) \ru (2,2.01) (1.5,-.01)\pu (1.5,1.3);
            \fill (1.9,1) circle (1.5mm);
        \end{tikzpicture}=-\begin{tikzpicture}[anchorbase,scale=.2]
            \draw[bl] (1,2.01) \dr (1.5,1.5) \ru (2,2.01) (1.5,-.01)\pu (1.5,1.01);
            \fill[bl] (1.5,1) circle (2.5mm);
        \end{tikzpicture}+\begin{tikzpicture}[anchorbase,scale=.2]
            \draw[bl] (2,2.01) \pd (2,1-.01)  (1,2.01)\pd (1,-.01);
            \fill[bl] (2,1) circle (2.5mm);
        \end{tikzpicture}.
    \end{equation}
\end{defn}

\begin{rems}
    The maps defined in Definition~\ref{defn:fork twist} differ from those in Proposition~\ref{prop:atomicslide} in that the latter are filtered pieces of maps induced by certain Reidemeister III moves, whereas the former do not arise from any Reidemeister move on tangles. Maps induced by negative fork twists can be defined similarly; we do not pursue them here, as they are not needed in this paper.
\end{rems}

\begin{prop}
    The maps $\tw$ and $\tw^{-1}$ are chain maps, and they are mutually homotopy inverse.
\end{prop}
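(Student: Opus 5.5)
The plan is to check everything directly in Soergel calculus, working with the explicit two-term complexes. A fork twist relates the complex of the crossing composed with a thick edge (a merge/split vertex) to the same thing with the crossing on the other side. Concretely, both source and target of $\tw$ are cones of the form $B\to B\otimes B$ or $B\otimes B\to B$, where $B$ denotes the Soergel object $c_1$. In each homological degree the component of $\tw$ is a morphism built from merges, splits, and the black dot of (\ref{eqn:defn_black_dot}). I would proceed in three steps.

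\textbf{Step 1 (chain map).} Write out the differentials, which are start/end dots by Proposition~\ref{prop:RIImove}. Then verify $d\circ\tw=\tw\circ d$ componentwise, in each of the (at most two) homological degrees. Each identity reduces to one of two kinds of relations: the Frobenius relations (unit/counit, associativity, needle $=0$), or the polynomial forcing relation that moves a dot across a strand. The black dot is designed precisely so that these forcing terms cancel. Concretely, it is $x_{\text{in}}-x_{\text{left}}$, i.e.\ a difference of polynomials on either side of the strand. Sliding it through the merge vertex and using the rotated version of (\ref{eqn:defn_black_dot}) should match the two sides. Because the black dot is rotation invariant, I can isotope the diagrams freely and compare normal forms.

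\textbf{Step 2 (composites).} Compute $\tw^{-1}\circ\tw$ and $\tw\circ\tw^{-1}$ degreewise. In the extreme degree, where the object is a single $B$, I expect each composite to be a merge followed by a split with black dots. The barbell/dot-sliding relation should turn this into the identity plus a term that factors through the differential. In the other degree, the object is $B\otimes B$ (or $B$ with a thick edge). There the composite should be the identity minus a term of the form (end dot)$\circ$(start dot), possibly with polynomials.

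\textbf{Step 3 (homotopy).} Write down the homotopy explicitly. It has a single nonzero component $h$ going from the top-degree term back to the bottom-degree term, with $h$ a merge/split possibly carrying a dot. Then check $\idd-\tw^{-1}\tw=dh+hd$ and the analogous identity for $\tw\tw^{-1}$. The checks use the relation $\text{(split)}\circ\text{(merge)}$ together with the dot-splitting relation $\idd_B\cdot(x_1+x_2)$-type identities. If the composites turn out to equal the identity on the nose, Step 3 is vacuous.

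I expect Step 2 and Step 3 to be the main obstacle. The issue is keeping track of signs coming from the sign convention for the cube and the orientation of the black dot; the rotation rule flips the sign. I would fix a single normal form, with all dots pushed to the left region, before comparing terms. If the naive homotopy fails by a sign, I would first try adjusting it by the polynomial difference $x_1-x_2$. This is the only degree-2 endomorphism available, so it is the natural correction.
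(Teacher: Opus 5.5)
Your plan matches the paper's proof: the paper says the chain-map property is an easy direct check. It then computes $\tw^{-1}\circ\tw-\idd$ in Soergel calculus and writes it as $[d,H]$ for an explicit homotopy $H$ with a single component between the two terms, saying the other composite is analogous. One correction to your fallback: since $\tw^{-1}\circ\tw-\idd$ has degree $0$, $H$ can be taken homogeneous, and then its component is a scalar multiple of a single undecorated trivalent vertex; only its sign can be adjusted, and adding a polynomial such as $x_1-x_2$ is neither needed nor degree-compatible.
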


\begin{proof}
    It is easy to check that $\tw$ and $\tw^{-1}$ are chain maps. It remains to prove they are mutually homotopy inverse. We prove $\tw^{-1}\circ\tw$ is chain homotopic to $\idd$:\[\tw^{-1}\circ\tw-\idd=\left(\twistdifference\right)=\left[d,\left(\twisthomotopy\right)\right].\]The other direction is similar.
\end{proof}

Let $M$ and $N$ be chain complexes in an additive category. Recall that a \textit{very strong deformation retract} from $M$ to $N$ \cite[Definition 2.9]{willis2021khovanov} is a chain map $\pi\colon M\to N$ satisfying the following conditions:\begin{itemize}
    \item There is a chain map $\iota\colon N\to M$ and a map $H\colon M\to M[-1]$ such that $\pi\circ\iota=\idd_{N}$ and $\iota\circ\pi-\idd_M=dH-Hd$. In other words, $\pi$ is a strong deformation retract.

    \item The homotopy $H$ satisfies the side conditions $H\circ H=0$, $\pi\circ H=0$, and $H\circ\iota=0$.
\end{itemize}

\begin{prop}\label{prop:2strands vsdr}
    Let $\rho_0$ be the map induced by the composition of a fork twist and a Reidemeister II move that eliminates crossings: \[\rho_0\colon\lrb{
            \begin{tikzpicture}[anchorbase,scale=.3,rotate=90]
                \BSbl{0}{0}
                \idwebl{1}{1.5}{1}
                \idwebr{0}{1.5}{1}
                \idwebr{0}{-1.5}{1}
                \idwebl{1}{-1.5}{1}
                \webarr{0}{3}
            \end{tikzpicture}
        }\xrightarrow{\tw}\lrb{
            \begin{tikzpicture}[anchorbase,scale=.3,rotate=90]
                \BSbl{0}{1.5}
                \idwebl{1}{0}{1}
                \idwebr{0}{0}{1}
                \idwebr{0}{-1.5}{1}
                \idwebl{1}{-1.5}{1}
                \webarr{0}{3}
            \end{tikzpicture}
        }\xrightarrow{\mathrm{RII}^{-1}}\lrb{
            \begin{tikzpicture}[anchorbase,scale=.3,rotate=90]
                \BSbl{0}{1.5}
                \webarr{0}{3}
            \end{tikzpicture}
        }.\]Then $\rho_0$ is a very strong deformation retract.
\end{prop}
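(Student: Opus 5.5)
Our approach is to write everything explicitly in the Soergel calculus on two strands (one color, blue) and verify the very strong deformation retract axioms by direct diagrammatic computation.

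\textbf{The source complex.} The source of $\rho_0$ is the complex of a $2$-strand tangle with two crossings of opposite sign (a Reidemeister II configuration) stacked on a thick edge $B_1$. Its cube of resolutions has four vertices; each is $B_1$ tensored with one of $\one_1$, $B_1$, $B_1$, $B_1B_1$. The target is the single object $B_1$ in homological degree $0$.

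\textbf{Steps.}

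First, write $\rho_0=\mathrm{RII}^{-1}\circ\tw$ as an explicit matrix of Soergel diagrams. Use Definition~\ref{defn:fork twist} for $\tw$ and Proposition~\ref{prop:RIImove} for $\mathrm{RII}^{-1}$. Since the target sits in a single degree, only the components from the degree-$0$ part survive, namely the two vertices with one $0$- and one $1$-resolution.

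Second, produce a candidate inclusion $\iota=\tw^{-1}\circ\mathrm{RII}$. Simplify it using the Frobenius relations for $B_1B_1$: the merge/split relations and the barbell/dot-sliding relations, which are compatible with the black dot shorthand (\ref{eqn:defn_black_dot}).

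Third, check that $\pi\circ\iota=\idd_{B_1}$ on the nose. This is a single Soergel diagram from $B_1$ to $B_1$. It should reduce to the identity via the needle relation (a merge followed by a split with a dot) together with the relation $\text{(barbell)}\cdot B_1 = B_1\cdot(\text{dot difference})$.

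Fourth, build the homotopy $H$. The source complex is contractible onto its middle degree in the standard way: the $B_1B_1\cong B_1\{1\}\oplus B_1\{-1\}$ decomposition cancels against the $\one_1$ and $B_1B_1$ corners by Gaussian elimination. Take $H$ to be the Gaussian elimination homotopy, i.e. the inverses of the isomorphism components $B_1\to B_1B_1$ (split) and $B_1B_1\to B_1$ (merge with appropriate dots), transported through the isomorphism $\tw$.

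\textbf{Verifying the side conditions.} Gaussian elimination automatically produces a very strong deformation retract: $H$ only maps between cancelled summands, so $H^2=0$, $\pi H=0$ and $H\iota=0$ hold by construction. The point to check is that the retraction produced by Gaussian elimination agrees with $\rho_0$ exactly, not merely up to homotopy. Equivalently, $\rho_0$ must vanish on the cancelled summands and restrict to the projection on the surviving one.

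\textbf{Main obstacle.} We expect this identification, together with the precise signs, to be the hard step. The black dot convention and the rotational sign in Definition~\ref{defn:fork twist} must combine with the signs in Proposition~\ref{prop:RIImove} so that cross terms cancel exactly. If the direct comparison fails, fall back as follows:
\begin{itemize}
\item define $\iota:=\tw^{-1}\circ\mathrm{RII}$ and $H_0$ from the composed homotopies;
\item apply the standard correction $H'=(dH_0+H_0d)H_0(dH_0+H_0d)$-type modifications, which turn any strong deformation retract into a very strong one without changing $\pi$ (using $\pi\iota=\idd$, established in the third step).
\end{itemize}
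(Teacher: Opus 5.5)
\textbf{Verdict.} Your proposal is correct, but the argument that actually carries it is your fallback. That fallback is a genuinely different route from the paper's.

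\textbf{Comparison with the paper.} The paper keeps $\pi=\rho_0$ and $\iota=\tw^{-1}\circ\mathrm{RII}$. It then writes down one explicit diagrammatic homotopy $H$ and checks $\iota\pi-\idd=[d,H]$ and the three side conditions by a direct Soergel-calculus computation.

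Your fallback keeps the same $\pi$ and $\iota$. It takes $H_0=\tw^{-1}h_{\mathrm{RII}}\tw+h_{\tw}$, built from the already-known homotopies, and then corrects $H_0$. Two remarks make this route completely formal.
\begin{enumerate}
\item Your third step needs no needle relation. Since $\tw,\tw^{-1}$ and the two RII maps are mutually homotopy inverse, $\pi\iota\simeq\idd$. The target $\lrb{B_1}$ is a one-term complex, so any homotopy on it vanishes and $\pi\iota=\idd$ on the nose.
\item Your correction $H'=(dH_0+H_0d)H_0(dH_0+H_0d)=QH_0Q$, with $Q=\idd-\iota\pi$, only gives $\pi H'=0$ and $H'\iota=0$. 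To get $H^2=0$ you need the second standard step $H''=-H'dH'$, with signs adapted to the paper's convention $dH-Hd$. One verifies it using $Q^2=Q$, $QH'=H'=H'Q$ and $dH'+H'd=-Q$.
\end{enumerate}
Neither correction changes $\pi$ or $\iota$. This matters, because the proof of Proposition~\ref{prop:flip2strands} uses this specific $\iota$.

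\textbf{What each route buys.} Your route proves the general fact that any homotopy equivalence onto a complex concentrated in a single homological degree is a very strong deformation retract. It needs no diagrammatics at all, but the resulting $H$ is only implicit. The paper's route gives an explicit $H$ at the cost of a hand computation. Downstream, the perturbation lemma only uses the existence of $H$ and the diagonal pieces $\rho_0$, so the implicit $H$ suffices.

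\textbf{The Gaussian-elimination plan.} You should drop it; as written it is not a proof. The paper only shows that $\tw$ is a homotopy equivalence: its stated inverse $\tw^{-1}$ inverts it only up to a nonzero homotopy. So ``transporting through the isomorphism $\tw$'' is unjustified. The identification of the resulting retraction with $\rho_0$, which you correctly flag as the crux, is left open.

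A minor point: in the source complex the thick edge sits between the two crossings. The picture of an RII configuration stacked on $B_1$ describes the target of $\tw$, not its source.
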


\begin{proof}
    The following diagram depicts the $\pi=\rho_0$ and $\iota\coloneqq\tw^{-1}\circ\mathrm{RII}$ maps.\begin{equation}\label{eqn:vsdr 2 strands}
        \vsdr
    \end{equation}
    Moreover, we have $\iota\circ\pi-\idd=[d,H]$, where\[H\coloneqq\left(\vsdrhmtpy\right).\]It is routine to check that the side conditions hold for $H$.
\end{proof}

We are now ready to study the simplest nontrivial example of the flip symmetry: the flip map on one crossing, as shown in Figure~\ref{fig:flip2strands}.

\begin{figure}[hbtp]
    \centering
    \[
        \begin{tikzpicture}[anchorbase,scale=.3,rotate=90]

            \idwebl{1}{1.5}{1}
            \idwebr{0}{1.5}{1}
            \idwebl{1}{0}{1}
            \idwebr{0}{0}{1}
            \idwebr{0}{-1.5}{1}
            \idwebl{1}{-1.5}{1}
            \webarr{0}{3}
        \end{tikzpicture}
        \to
        \begin{tikzpicture}[anchorbase,scale=.3,rotate=90]
            \idwebl{1}{1.5}{1}
            \idwebr{0}{1.5}{1}
            \webarr{0}{3}
        \end{tikzpicture}
    \]
    \caption{Conjugation by a positive twist gives the flip map on a single crossing.}
    \label{fig:flip2strands}
\end{figure}

From the perspective of the flip symmetry, there are distinguished crossings on both sides of Figure~\ref{fig:flip2strands}: the middle crossing in the source tangle and the unique crossing in the target tangle, respectively. Resolving these distinguished crossings yields $2$-step filtrations on link homologies. The subtle point here is that the most obvious map induced by the Reidemeister II move is not filtered. In some sense, this is because this map breaks the symmetry between the left and right crossings. To fix this, we first resolve the middle crossing in the source and run a homotopy perturbation argument similar to \cite[Lemma 4.5]{chen2025flip}.

\begin{prop}\label{prop:flip2strands}
    There exists a map \[\varphi\colon\lrb{
            \begin{tikzpicture}[anchorbase,scale=.3,rotate=90]

                \idwebl{1}{1.5}{1}
                \idwebr{0}{1.5}{1}
                \idwebl{1}{0}{1}
                \idwebr{0}{0}{1}
                \idwebr{0}{-1.5}{1}
                \idwebl{1}{-1.5}{1}
                \webarr{0}{3}
            \end{tikzpicture}
        }\to\lrb{
            \begin{tikzpicture}[anchorbase,scale=.3,rotate=90]
                \idwebl{1}{1.5}{1}
                \idwebr{0}{1.5}{1}
                \webarr{0}{3}
            \end{tikzpicture}
        }\]with the following properties:\begin{enumerate}
        \item $\varphi$ is chain homotopic to the map induced by the Reidemeister II move that eliminates two crossings.

        \item $\varphi$ is filtered with respect to the $2$-step filtrations.

        \item The filtered pieces of $\varphi$ (with respect to the aforementioned filtrations) are equal to the maps induced by the fork twists or Reidemeister II moves on corresponding subcomplexes.
    \end{enumerate}

    A similar statement holds for \[\varphi\colon\lrb{
            \begin{tikzpicture}[anchorbase,scale=.3,rotate=90]

                \idwebl{1}{1.5}{1}
                \idwebr{0}{1.5}{1}
                \idwebr{0}{0}{1}
                \idwebl{1}{0}{1}
                \idwebr{0}{-1.5}{1}
                \idwebl{1}{-1.5}{1}
                \webarr{0}{3}
            \end{tikzpicture}
        }\to\lrb{
            \begin{tikzpicture}[anchorbase,scale=.3,rotate=90]
                \idwebr{0}{1.5}{1}
                \idwebl{1}{1.5}{1}
                \webarr{0}{3}
            \end{tikzpicture}
        }\]with the difference that one of the filtered pieces is the negative of the fork twist.
\end{prop}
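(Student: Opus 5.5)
We write the source tangle complex as an iterated mapping cone along the middle crossing. Its $0$- and $1$-resolutions give a two-step filtration $\lrb{T}=\operatorname{Cone}\bigl(\lrb{T_0}\xrightarrow{\delta}\lrb{T_1}\bigr)$, where $T_0$ (resp. $T_1$) is the two-crossing tangle with the middle crossing replaced by parallel strands (resp. by a thick edge). The target complex is filtered the same way by resolving its unique crossing.

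On the piece $T_0$, the two outer crossings cancel by a Reidemeister II move. On the piece $T_1$, a thick edge sits between the two crossings. We first apply the positive fork twist $\tw$ of Definition~\ref{defn:fork twist} to move one crossing across the thick edge, and then cancel the crossings by Reidemeister II. By Proposition~\ref{prop:2strands vsdr}, this composite $\rho_0$ is a very strong deformation retract onto the single thick edge.

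Concretely, we use the following data on the two pieces:
\begin{itemize}
\item on $T_0$, the Reidemeister II data $(\pi_0,\iota_0,H_0)$ from Proposition~\ref{prop:RIImove}, checked to be (or upgraded to) very strong deformation retract data;
\item on $T_1$, the data $(\pi_1,\iota_1,H_1)=(\rho_0,\tw^{-1}\circ\mathrm{RII},H)$ from Proposition~\ref{prop:2strands vsdr}.
\end{itemize}
Applying the homological perturbation lemma to the cone, with $\delta$ as the perturbation and $H_0\oplus H_1$ as the homotopy, produces a deformation retract of $\lrb{T}$ onto the cone
\[
\operatorname{Cone}\bigl(\lrb{\text{parallel}}\xrightarrow{\pi_1\delta\iota_0}\lrb{\text{thick}}\bigr).
\]

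We then check three things.
\begin{enumerate}
\item The perturbed projection is $\pi=\pi_0\oplus\pi_1+\pi_1\delta H_0$, with no further terms. The side conditions ($H_1\iota_1=0$ and $H^2=0$) kill all higher terms, since the perturbation raises the filtration by exactly one and the filtration has only two steps. So $\pi$ is lower triangular, which is property (2).
\item Its diagonal pieces are the Reidemeister II map $\pi_0$ and the map $\rho_0$ built from the fork twist, which is property (3).
\item The new differential $\pi_1\delta\iota_0$ must equal the zip foam, i.e. the differential of the one-crossing target complex, possibly up to sign. This is a Soergel-calculus computation: compose the RII inclusion of Proposition~\ref{prop:RIImove}, the start dot, the fork twist, and the RII projection.
\end{enumerate}

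Granting these, $\varphi\coloneqq\pi$ is a homotopy equivalence onto the target complex. For property (1), $\varphi$ must be homotopic to the RII map. Both are homotopy equivalences between the same complexes, and the space of degree-$0$ chain maps here is small: in the lowest quantum degree, the endomorphisms are multiples of the identity. So it suffices to compare them on one generator, or to use functoriality/uniqueness up to sign as in \cite{ehrig2018functoriality} and then fix the sign by evaluating on the $0$-resolution piece, where both maps agree.

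The second configuration has the crossings arranged so that the fork twist must slide in the opposite direction. We handle it the same way, using the rotated black-dot relation in Definition~\ref{defn:fork twist}. That relation carries an extra minus sign, which is where the negative fork-twist filtered piece comes from.

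The main obstacle is the middle step: verifying that $\pi_1\delta H_0$ and the induced differential take the expected form, and pinning down the signs. This requires explicit Soergel-diagram manipulation with the black-dot shorthand, and care with the sign conventions $s(u,v)$ in the cube.
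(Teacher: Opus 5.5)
Your framework is the paper's. You perturb the sum of the Reidemeister~II retraction on the parallel piece and the very strong deformation retract $\rho_0$ of Proposition~\ref{prop:2strands vsdr} on the thick piece, using the middle crossing's connecting map as the perturbation. The one real check is that the transferred differential $\pi\delta\iota$ equals the one-crossing differential on the nose; this is exactly the paper's verification that $\mathrm{RII}\circ(\text{end dot})\circ\iota=\text{end dot}$ for (\ref{eqn:diagram_enddot}). Property (1) then comes from the simple-tangle uniqueness of \cite{ehrig2018functoriality} plus a sign check.

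However, you have the two configurations swapped. In the first configuration the middle crossing's differential is the end dot (unzip), going from the thick resolution to the parallel one, not the start dot. So the parallel piece is the subcomplex, and the off-diagonal term is $\mathrm{RII}\circ\delta\circ H$ with $H$ the homotopy of Proposition~\ref{prop:2strands vsdr}. The identity to verify is $\mathrm{RII}\circ(\text{end dot})\circ\tw^{-1}\circ\mathrm{RII}=\text{end dot}$. The composite you list (RII inclusion, start dot, fork twist, RII projection) is the check for the \emph{second} configuration.

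Your account of that second configuration is also wrong. Its two outer crossings are identical to those of the first; only the middle crossing changes sign. So nothing has to slide in the opposite direction, and no other twist map is involved. The same $\rho_0$ is used, the connecting map becomes the start dot, and the square (\ref{eqn:diagram_startdot}) anticommutes. That anticommutativity is what forces replacing $\rho_0$ by $-\rho_0$, giving the negative fork-twist piece.

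Finally, your sign fix for (1), ``on the $0$-resolution piece, where both maps agree,'' is unsupported. The Reidemeister~II map eliminating two crossings is not filtered with respect to the middle crossing; that is the whole reason for the construction. So it has no reason to agree with $\varphi$ on any resolution piece. Since the two maps are homotopic up to a global sign, you must compare them on some nonzero homology class. The paper does this by closing up the tangles and looking at the oriented resolutions, which support the homology.
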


\begin{proof}

    We first prove the following diagram commutes up to homotopy.

    \begin{equation}\label{eqn:diagram_enddot}
        \twistenddot
    \end{equation}

    We do this by checking the composition $\mathrm{RII}\circ\begin{tikzpicture}[anchorbase,scale=.2]
            \draw[bl] (0,-0.01) to (0,1);
            \fill[bl] (0,1) circle (2.5mm);
        \end{tikzpicture}\circ\iota$ is equal (not just chain homotopic) to $\begin{tikzpicture}[anchorbase,scale=.2]
            \draw[bl] (0,-0.01) to (0,1);
            \fill[bl] (0,1) circle (2.5mm);
        \end{tikzpicture}$, where $\iota$ is the homotopy inverse of $\rho_0$ defined in (\ref{eqn:vsdr 2 strands}), as depicted in Figure~\ref{fig:commuteenddot}. The only nontrivial map in the composition is $\begin{tikzpicture}[anchorbase,scale=.2]
            \draw[bl] (0,-0.01) \pu (0,1) (0,1)\ur (1,1.8) (1,1.8)\rd (2,1) (2,1)\dl(1.5,0.5) (1.5,0.5)\lu(1,1);
            \fill[bl] (1,1) circle (2.5mm);
        \end{tikzpicture}=\begin{tikzpicture}[anchorbase,scale=.2]
            \draw[bl] (0,-0.01) to (0,1);
            \fill[bl] (0,1) circle (2.5mm);
        \end{tikzpicture}$. This is easier than checking that the original diagram (\ref{eqn:diagram_enddot}) commutes, because there is no room for a homotopy between two $1$-term complexes in the first row of (\ref{eqn:diagram_enddot}).

    \begin{figure}[hbtp]
        \centering
        \[\commuteenddot\]
        \caption{Computing the composition of conjugating by the half twist for the end dot. Only resolutions in the relevant grading are depicted.}
        \label{fig:commuteenddot}
    \end{figure}

    By Proposition~\ref{prop:2strands vsdr} and \cite[Lemma 2.13]{willis2021khovanov}, the vertical maps in (\ref{eqn:diagram_enddot}) are very strong deformation retracts. So we can apply the homotopy perturbation lemma as in \cite[Lemma 4.5]{chen2025flip} to obtain a map $h$ such that the sum of the vertical maps in diagram (\ref{eqn:diagram_enddot_hmtpy}) is a chain map. Denote this map by $\varphi$. By definition, $\varphi$ satisfies properties (2) and (3).

    The tangles involved in the argument are simple in the sense of \cite[Lemma 4.6]{ehrig2018functoriality}, so $\varphi$ is chain homotopic to the original map induced by a Reidemeister II move up to sign. To see the sign is correct, we close the tangles up and look at the oriented resolutions---which support the homologies. It is then easy to see $\varphi$ is honestly chain homotopic to the map induced by a Reidemeister II move, so $\varphi$ also satisfies condition (1).

    \begin{equation}\label{eqn:diagram_enddot_hmtpy}
        \twistenddothmtpy
    \end{equation}

    The situation for negative crossings is similar. The only difference is that diagram (\ref{eqn:diagram_startdot}) is \textit{anti-commutative} instead of being commutative, so we have to replace $\rho_0$ by $-\rho_0$ to run the previous argument. \begin{equation}\label{eqn:diagram_startdot}
        \twiststartdot
    \end{equation}
\end{proof}

\section{The flip cobordism}\label{sec: flip n strand}

In this section, we turn to the global picture of the flip cobordism and use the maps studied in Section~\ref{sec:flip 2 strands} to prove Theorem~\ref{thm:main filtration}.

Throughout this section, we assume that coherently oriented braids are presented horizontally in the plane, with all strands oriented from right to left. The (upper) closure of a coherently oriented braid is taken \textit{above} the braid and carries a natural orientation. One can also take the closure \textit{below} the braid, which we call the \textit{lower closure}. 

Let $D$ be a diagram of a link $K$ obtained as the closure of a coherently oriented braid $\beta$ with $n$ crossings and $k$ strands, and let $D^*$ be its flip diagram. Recall that there is a $\{0,1\}^n$-grading on both $\lrb{D}$ and $\lrb{D^*}$ given by remembering the $0$ or $1$ resolution at each crossing, which we call the \textit{cubical grading} hereafter. Theorem~\ref{thm:main filtration} is rephrased as follows.

\begin{thm}\label{thm: filtered map}
    The map \[\rho\colon\lrb{D}\to\lrb{D^*}\]induced by the flip cobordism is chain homotopic to a map $\rho'$ that preserves the cubical grading.
\end{thm}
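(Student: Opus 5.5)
We follow the strategy of \cite{chen2025flip}: realize the flip cobordism as a sequence of moves, each of which we can make filtered. First we realize the flip cobordism concretely. Rotating the closure of a braid $\beta$ by $\pi$ about the $x$-axis amounts, up to isotopy, to conjugating $\beta$ by a full set of half twists. We insert a canceling pair $\Delta\Delta^{-1}$ of half twists on the $k$ strands (a composition of Reidemeister II moves), slide $\beta$ through $\Delta$, which turns each crossing $\sigma_i$ into its flipped counterpart, and then trace $\Delta$ around the closure to cancel it against $\Delta^{-1}$ (Markov-type isotopy). Up to homotopy, and using Proposition~\ref{prop:flip invariance} together with the functoriality of \cite{ehrig2018functoriality}, we may write $\rho$ as a composition of RII maps, RIII maps, and closure isotopies. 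We further factor the passage of a single crossing through $\Delta$ so that its atomic piece is exactly the two-strand situation of Figure~\ref{fig:flip2strands}, namely conjugation of one crossing by a positive twist, while the other strands of $\Delta$ contribute slide moves of type Proposition~\ref{prop:atomicslide}.

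Next we filter both sides. In $\lrb{D}$ we filter by the cubical grading of the crossings of $\beta$, and in $\lrb{D^*}$ by that of the flipped crossings. The crossings of the auxiliary half twists are first resolved and then contracted. For each crossing of $\beta$ we apply Proposition~\ref{prop:flip2strands} (or its negative-crossing variant). This gives a local map $\varphi$ that is homotopic to the RII map, filtered for the $2$-step filtration, and whose filtered pieces are fork twists, with a sign for negative crossings. Because the tangle invariant is a canopolis, tensoring these local very strong deformation retracts with identities on the rest of the diagram gives very strong deformation retracts again (\cite[Lemma 2.13]{willis2021khovanov}). The slide maps of Proposition~\ref{prop:atomicslide} are already filtered pieces of RIII maps, so they can be perturbed in the same way. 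Running the homotopy perturbation lemma crossing by crossing, in the manner of \cite[Lemma 4.5]{chen2025flip}, then produces a map $\rho''\simeq\rho$ that is filtered with respect to the product partial order on $\{0,1\}^n$. Its diagonal pieces are compositions of fork twists, slides, and RII maps between planar resolutions. Signs are tracked explicitly as in Proposition~\ref{prop:flip2strands}, so the argument works over $\Z$.

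Finally we upgrade ``filtered'' to ``preserves the cubical grading''. A component of $\rho''$ from cube vertex $u$ to cube vertex $v\ne u$ with $u\le v$ raises $|v|$, and so it raises homological degree, since the shift $h^{|v|}$ is the same on both sides because $n_\pm(D)=n_\pm(D^*)$. But $\rho''$ is a chain map of degree $0$, so every such component must vanish. The remaining part $\rho'$ is block-diagonal, and this is the required representative.

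The main obstacle is the perturbation step. The homotopy $H$ of Proposition~\ref{prop:2strands vsdr} must satisfy the side conditions after it is tensored into the full diagram. We must also check that the perturbed map introduces no components that decrease the filtration; this uses the commutativity of diagrams like (\ref{eqn:diagram_enddot}) globally rather than only locally. If that fails, we would first try to order the crossings so that each perturbation only interacts with crossings that have already been processed.
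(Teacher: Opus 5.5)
\textbf{Verdict: genuine gap.} Your overall architecture matches the paper's: conjugate by half twists, use Proposition~\ref{prop:flip2strands} for the atomic two-strand move, compose filtered maps, then use homological degree to upgrade ``filtered'' to ``grading-preserving''. Your last paragraph is exactly the paper's final step.

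\textbf{The missing idea: locality.} The step you flag as the main obstacle is left open, and your fallback (reordering crossings) is speculative, so as written the proof does not close. In fact no global perturbation is needed.
\begin{itemize}
\item Proposition~\ref{prop:flip2strands} already produces, inside the three-crossing two-strand tangle, a chain map $\varphi$ that is honestly chain homotopic to the RII map (property (1)) and filtered (property (2)). The homotopy perturbation lemma is used only there.
\item Because $\lrb{\cdot}$ is a canopolis, $\varphi\otimes\idd$ is homotopic to $\mathrm{RII}\otimes\idd$ on the whole diagram. It is filtered for the distinguished crossing and acts as the identity on the resolutions of all other original crossings. So there are no global side conditions to check, and no filtration-decreasing components can appear.
\item The RIII moves that gather the three crossings between strands $i$ and $i+1$, and the half sweep-around moves, need no perturbation either. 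Their chain maps can simply be chosen filtered \cite[Section 3.5]{morrison2022invariants}.
\item RII moves away from the distinguished strands do not touch original crossings, so they preserve the grading.
\end{itemize}
A composite of filtered maps is filtered, and your degree argument then finishes the proof.

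\textbf{Two further points in your setup need repair.}

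First, a single pair $\Delta\Delta^{-1}$, with $\beta$ slid through $\Delta$ by braid relations alone, does not yield filtered maps. Already for $k=2$ the slide $\sigma_1\Delta_2\to\Delta_2\sigma_1$ is the identity of $\lrb{\sigma_1\sigma_1}$ with the roles of the two crossings exchanged. That map sends the $(0,1)$ summand to itself and the $(1,0)$ summand to itself, so it is not filtered in either direction. The two-strand model of Figure~\ref{fig:flip2strands} arises only when each crossing is conjugated separately. This means inserting a canceling pair next to every crossing, as in Construction~\ref{con:flip cobordism}, and then removing two of the three $(i,i+1)$-crossings by RII, as in Proposition~\ref{prop:flip k strands}.

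Second, cancelling $\Delta$ against $\Delta^{-1}$ by conjugation around the closure leaves you at the \emph{upper} closure of the braid obtained by $\sigma_i\mapsto\sigma_{k-i}$. But $D^*$ is its \emph{lower} closure. You still need the $k$ half sweep-around moves, which also involve Reidemeister~I moves, together with the fact that their maps can be chosen filtered.
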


To realize the flip cobordism as a sequence of Reidemeister moves, we introduce pairs of half twists and apply them to each crossing as in \cite{chen2025flip}. The construction below is a variant of \cite[Construction 4.3]{chen2025flip}. For us, a $(k,k)$ tangle is a $(2k)$-ended tangle with $k$ input boundary points and $k$ output boundary points, not necessarily oriented. We denote the Artin generators of the braid group on $k$ strands by $\sigma_1,\dots,\sigma_{k-1}$, so that the positive half twist on $k$ strands is presented as the braid word 
\[\Delta_{k}=(\sigma_{k-1}\sigma_{k-2}\cdots\sigma_1)(\sigma_{k-1}\cdots\sigma_{2})\cdots(\sigma_{k-1}\sigma_{k-2})\sigma_{k-1}.\]The following lemma is immediate.

\begin{lem}\label{lem flip 1 crossing tangles}
	Let $T$ be a $(k,k)$ tangle represented by a single braid word $\sigma_i$ (resp. $\sigma^{-1}_i$). Then $\Delta_{k}^{-1}\circ T\circ \Delta_{k}$ is isotopic rel boundary to $T^{*}= \sigma_{k-i}$ (resp. $\sigma_{k-i}^{-1}$) via Reidemeister II and III moves. Moreover, they are related by a cobordism between tangles that rotates the tangle about the $x$-axis by $180^\circ$. 
\end{lem}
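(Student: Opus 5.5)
The plan is to prove the conjugation identity $\Delta_k^{-1}\sigma_i\Delta_k=\sigma_{k-i}$ in the braid group $B_k$, and then read off both the Reidemeister moves and the rotation cobordism from it.

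First, recall the standard fact that conjugation by the half twist is the involution $\sigma_j\mapsto\sigma_{k-j}$ of $B_k$. Concretely, I would prove $\sigma_j\Delta_k=\Delta_k\sigma_{k-j}$ for every $j$. The intended route is the standard one:
\begin{itemize}
\item Write $\Delta_k$ in the given form $(\sigma_{k-1}\cdots\sigma_1)\Delta'$, where $\Delta'$ is a half twist on the strands indexed by $2,\dots,k$.
\item Induct on $k$.
\item Use the relations $\sigma_j(\sigma_{k-1}\cdots\sigma_1)=(\sigma_{k-1}\cdots\sigma_1)\sigma_{j-1}$ for $j\ge 2$. These follow from the braid relation together with far commutativity. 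I need to double-check the index direction against the paper's convention of reading right to left.
\end{itemize}
Given this identity, we get $\Delta_k^{-1}\sigma_i\Delta_k=\sigma_{k-i}$, and inverting both sides gives the statement for $\sigma_i^{-1}$.

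Each braid relation $\sigma_j\sigma_{j+1}\sigma_j=\sigma_{j+1}\sigma_j\sigma_{j+1}$ is a Reidemeister III move. Each commutation $\sigma_j\sigma_l=\sigma_l\sigma_j$ with $|j-l|>1$ is a planar isotopy. The free cancellation $\Delta_k^{-1}\Delta_k=1$ is a sequence of Reidemeister II moves. So the algebraic identity is realized by Reidemeister II and III moves. The moves are on the tangle diagram rel boundary, since words in $B_k$ represent tangles with fixed endpoints.

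For the last claim, view $\Delta_k^{-1}\circ T\circ\Delta_k$ as follows:
\begin{itemize}
\item $T$ is sandwiched between a half twist and its inverse.
\item Geometrically, $\Delta_k$ is the braid traced out by rotating a horizontal plane containing the $k$ endpoints by $\pi$ about the $x$-axis.
\item Performing that rotation progressively on the middle portion containing $T$ gives an isotopy rel boundary. It unwinds $\Delta_k$ against $\Delta_k^{-1}$ and leaves the rotated copy of $T$, which is $T^*$.
\end{itemize}
The rotated crossing $\sigma_i$ becomes the crossing between positions $k-i$ and $k-i+1$. Its sign is unchanged, because rotation preserves crossing sign. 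This gives $\sigma_{k-i}$, consistent with the algebra.

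The trace of this rotation is the flip cobordism. I would also note that it is isotopic to the movie of Reidemeister moves above, since both are isotopies of the same tangle rel boundary with matching endpoints. The main obstacle is only convention bookkeeping: the direction of the twist, the reading order of the braid word, and the clockwise-rotation convention should all match so that the result is $\sigma_{k-i}$ rather than $\sigma_{k-i}^{-1}$.
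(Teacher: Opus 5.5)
Your strategy is the standard one and matches what the paper intends; the paper calls the lemma immediate and gives no proof. The ingredients are right: conjugation by $\Delta_k$ is the involution $\sigma_j\mapsto\sigma_{k-j}$, braid relations are RIII moves, far commutations are planar isotopies, free cancellations are RII moves, and the progressive rotation supplies the cobordism. Two steps need repair.

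\textbf{The algebraic induction.} Your displayed relation is false, and the error has nothing to do with reading convention. It is an identity between words in $B_k$, and for $k=3$, $j=2$ it says $\sigma_2\sigma_2\sigma_1=\sigma_2\sigma_1\sigma_1$, i.e.\ $\sigma_2=\sigma_1$. The correct relation is $\sigma_j(\sigma_{k-1}\cdots\sigma_1)=(\sigma_{k-1}\cdots\sigma_1)\sigma_{j+1}$ for $1\le j\le k-2$.

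With the factorization $\Delta_k=(\sigma_{k-1}\cdots\sigma_1)\Delta'$ and induction, this gives $\Delta_k^{-1}\sigma_j\Delta_k=\Delta'^{-1}\sigma_{j+1}\Delta'=\sigma_{k-j}$ only for $j\le k-2$. The generator $\sigma_{k-1}$ is never reached, just as your version leaves $j=1$ unaddressed. Covering it algebraically needs a second factorization, e.g.\ $\Delta_k=(\sigma_1\sigma_2\cdots\sigma_{k-1})\Delta_{k-1}$, which must itself be proved.

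The simplest fix is to let the geometry do the work. Your rotation isotopy already shows that $\Delta_k^{-1}\sigma_i^{\pm1}\Delta_k$ and $\sigma_{k-i}^{\pm1}$ are isotopic braids rel boundary. Artin's theorem turns this into a chain of braid relations and free cancellations, i.e.\ RII/RIII moves and planar isotopies. Alternatively, cite Garside's identity $\Delta_k\sigma_i\Delta_k^{-1}=\sigma_{k-i}$.

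\textbf{Comparing the two cobordisms.} Saying that both are ``isotopies of the same tangle rel boundary with matching endpoints'' does not by itself make the Reidemeister movie and the rotation trace isotopic as cobordisms. Two isotopies with the same ends can have non-isotopic traces. The claim is true here for a different reason:
\begin{itemize}
\item Both isotopies pass only through geometric braids: braid-like RII/RIII moves and your $x$-preserving rotation both stay in that space.
\item Each component of the space of braids with fixed ends is contractible, since $\mathrm{UConf}_k(\RR^2)$ is a $K(B_k,1)$.
\item Hence the two paths are homotopic rel ends, and their traces are isotopic rel boundary.
\end{itemize}
For the paper's application only the induced chain map matters. There the uniqueness statement cited in the footnote to Proposition~\ref{prop:flip k strands} already makes the choice of move sequence largely irrelevant.
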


Recall that given a $(1,1)$ tangle diagram $T$, presented horizontally, we have two ways to close $T$ up to obtain a link diagram: from the top or from the bottom. These two closures are related by the so-called \textit{half sweep-around move}, studied in \cite{morrison2022invariants}. It can be realized by two Reidemeister I moves and several Reidemeister II and III moves; see Figure~\ref{fig:half sweep around} for illustration.

\begin{figure}[htbp]
    \centering
    \includegraphics[width=0.8\linewidth]{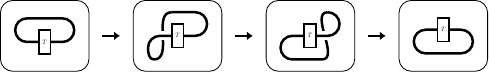}
    \caption{The half sweep-around move.}
    \label{fig:half sweep around}
\end{figure}

\begin{con}\label{con:flip cobordism}
    Assume that $D$ is the closure of a coherently oriented braid \[\beta=\sigma_{i_1}^{e_1}\circ\sigma_{i_2}^{e_2}\circ\cdots\circ\sigma_{i_n}^{e_n},\]where $i_1,\dots,i_n\in\{1,\dots,k-1\}$ and $e_1,\dots,e_n\in\{\pm 1\}$. Then the flip cobordism between $D$ and $D^*$ can be realized by the following sequence of moves; see Figure~\ref{fig:decompose flip cobordism} for illustration.
    
    \begin{enumerate}[i.]
    
    \item Insert $n$ canceling pairs of half twists $\Delta_{k} \circ \Delta_{k}^{-1}$ between $\sigma^{e_j}_{i_j}$ and $\sigma^{e_{j+1}}_{i_{j+1}}$ ($1\le j\le n$) via Reidemeister II moves. Here the subscripts are understood modulo $n$, so the last half twist is inserted between $\sigma^{e_n}_{i_n}$ and $\sigma^{e_1}_{i_1}$, which is valid after taking closure.
    
    \item Isotope $\Delta^{-1}_{k} \circ \sigma^{e_j}_{i_j} \circ \Delta_{k}$ to $\sigma^{e_j}_{k-i_j}$ for $1\le j\le n$ via Reidemeister moves described in Lemma~\ref{lem flip 1 crossing tangles}.
    
    \item From the outermost strand to the innermost strand, apply the half sweep-around move as in Figure~\ref{fig:half sweep around} (cf. \cite[Equation (3.1)]{morrison2022invariants}) $k$ times to move from the upper closure to the lower closure.
\end{enumerate}

\begin{figure}[hbtp]
    
    \centering
    \begin{align*}
        D&=\flipcoba\xrightarrow{\text{RII moves}}\flipcobb\\&\xrightarrow[\text{Lemma~\ref{lem flip 1 crossing tangles}}]{\text{RII and RIII moves}}\flipcobc
        \xrightarrow[\text{Figure~\ref{fig:half sweep around}}]{\text{half sw}}\flipcobda\\&\xrightarrow[\text{Figure~\ref{fig:half sweep around}}]{\text{half sw}}\cdots\xrightarrow[\text{Figure~\ref{fig:half sweep around}}]{\text{half sw}}\flipcobdk=D^*
    \end{align*}
    \caption{Decompose the flip cobordism.}
    \label{fig:decompose flip cobordism}
\end{figure}

Schematically, the map can be depicted as follows: \[\lrb{D}\xrightarrow{\mathbb{1}\Rightarrow\Delta\Delta^{-1}}\lrb{\cdots\Delta^{-1}\sigma_{i_j}^{\pm}\Delta\cdots}\xrightarrow{\text{Lemma~\ref{lem flip 1 crossing tangles}}}\lrb{\sigma_{i_j}^{\pm *}}\xrightarrow{\text{half sw}}\lrb{D^*}.\]
\end{con}

The most interesting move in Construction~\ref{con:flip cobordism} is the second one. This move can be further localized near each crossing to a half twist conjugation. When $k=2$, this is exactly the scenario that we have studied in Section~\ref{sec:flip 2 strands}. The following proposition summarizes the necessary properties we need for general $k$ (cf. \cite[Lemma 4.5]{chen2025flip}).

\begin{prop}\label{prop:flip k strands}
    Let $T$ be a coherently oriented $k$-strand braid with a single crossing. Let \[\rho_T\colon\lrb{\Delta_k^{-1}\circ T\circ \Delta_k}\to\lrb{T^*}\] be a chain map induced by a sequence of Reidemeister moves\footnote{By \cite[Lemma 4.6]{ehrig2018functoriality}, such a map is unique up to chain homotopies.}. Then $\rho_T$ is chain homotopic to a filtered map $\rho_T'$ with respect to the $\{0,1\}$-grading coming from resolving the unique crossing in $T$.
\end{prop}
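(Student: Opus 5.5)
We reduce to the $2$-strand model of Proposition~\ref{prop:flip2strands} by factoring the half twist. Write $T=\sigma_i^{\pm1}$. Choose a factorization $\Delta_k=\alpha\circ\sigma_i\circ\gamma$ (up to braid relations), or more usefully $\Delta_k = \Delta'\circ\Delta_2^{(i)}$, where $\Delta_2^{(i)}=\sigma_i$ is the half twist on strands $i,i+1$ and $\Delta'$ is a positive braid satisfying $\Delta'\sigma_i=\sigma_{k-i}\Delta'$ after rearranging strands. Concretely, $\Delta_k\sigma_i\Delta_k^{-1}=\sigma_{k-i}$, so $\Delta_k^{-1}T\Delta_k$ can be rewritten as $\Delta_k^{-1}\,\Delta'\,(\sigma_i^{-1}\,T\,\sigma_i)\,\Delta'^{-1}\,\Delta_k$ via braid relations, and the outer part is a tangle not touching the distinguished crossing. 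The map $\rho_T$ then factors as a composition of (a) Reidemeister II/III moves that only slide crossings of $\Delta'$ past the local $2$-strand piece, and (b) the local $2$-strand conjugation $\sigma_i^{-1}T\sigma_i\to T$ on strands $i,i+1$, tensored with the identity elsewhere. By \cite[Lemma 4.6]{ehrig2018functoriality}, any such factorization gives a map homotopic to $\rho_T$, so we may choose it freely.

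For step (b) we invoke Proposition~\ref{prop:flip2strands}: after resolving the middle crossing, it yields a map $\varphi$, homotopic to the RII map, that is filtered with respect to the $2$-step filtration, with filtered pieces given by fork twists (with a sign for negative crossings). Since the canopolis structure is compatible with horizontal juxtaposition, $\varphi\otimes\idd$ stays filtered for the grading coming from the distinguished crossing.

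For step (a) we must move crossings of the half twist past a single crossing, or past its resolutions, i.e.\ a pair of parallel strands or a thick edge. Here I would use the atomic slide maps of Proposition~\ref{prop:atomicslide}, which are exactly the filtered pieces of the Reidemeister III maps in Proposition~\ref{prop:RIIImove} with respect to resolving the crossing being slid over; these show that the RIII map has a filtered representative in which the distinguished crossing's resolution is preserved. Composing filtered maps gives a filtered map, so $\rho_T$ is homotopic to a composition of filtered maps. I would then assemble the pieces with the homotopy perturbation lemma, using the very strong deformation retract structure (Proposition~\ref{prop:2strands vsdr} and \cite[Lemma 2.13]{willis2021khovanov}) so that the perturbed sum is a genuine chain map homotopic to the original.

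I expect the main obstacle to be step (a). One has to check that the RIII maps sliding a strand of $\Delta'$ across the distinguished crossing really admit filtered representatives in both resolutions. The thick-edge resolution is the new difficulty, since the cup-sliding trick fails there. I would try first to choose the factorization of $\Delta_k$ so that every slide is of the form $\slide_{\one_1,B_1}$ or $\slide_{B_1,\one_1}$, namely a crossing of $\Delta'$ passing over either the identity or the thick-edge resolution. This would let Proposition~\ref{prop:atomicslide} supply the filtered pieces directly.
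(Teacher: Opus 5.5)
Your route is essentially the paper's. Both arguments have three ingredients. First, RIII moves slide the other strands across the distinguished crossing, and their maps can be taken filtered with the slide maps as graded pieces; the paper cites \cite[Section 3.5]{morrison2022invariants} for this, so step (a) is standard. Second, RII cancellations avoid the distinguished crossing and are automatically filtered. Third, the final two-strand RII map is replaced by the filtered map of Proposition~\ref{prop:flip2strands}. Composing these filtered chain maps already gives $\rho_T'$. Your closing homotopy-perturbation assembly is therefore unnecessary; it is only needed inside the two-strand case. Likewise, the thick-edge difficulty lives in your step (b), not step (a).

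One slip needs fixing. Your displayed rewriting is false: with $\Delta_k=\Delta'\sigma_i$ its right-hand side collapses to $\sigma_i^{-2}T\sigma_i^{2}=T$ rather than $\sigma_{k-i}^{\pm1}$. Factor on the other side instead, $\Delta_k=\sigma_i\Delta''$ with $\Delta''$ positive. This gives $\Delta_k^{-1}T\Delta_k=\Delta''^{-1}(\sigma_i^{-1}T\sigma_i)\Delta''$ with $\Delta''^{-1}\sigma_i\Delta''=\sigma_{k-i}$. Alternatively, do as the paper does and simply isotope the three crossings between the two relevant strands to one end of the braid.
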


\begin{proof}
    Assume that the distinguished crossing in $T$ is between the $i$-th and $(i+1)$-th strands. The isotopy from $\Delta_k^{-1}\circ T\circ \Delta_k$ to $T^*$ can be performed in three steps; see Figure~\ref{fig:local flip twist} for an illustration when $k=4$ and $T=\sigma_1$. \begin{enumerate}[i.]
        \item Isotope the (three) crossings between the $i$-th and $(i+1)$-th strands to the left end of the braid via Reidemeister III moves. 
        \item Eliminate crossings outside the $i$-th and $(i+1)$-th strands via Reidemeister II moves.
        \item Eliminate two crossings between the $i$-th and $(i+1)$-th strands via a Reidemeister II move.
    \end{enumerate}

    \begin{figure}[hbtp]
        \centering
        \includegraphics{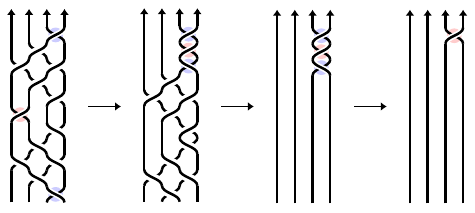}
        \caption{Decompose the local flip cobordism. The tangles are presented vertically instead of horizontally only for the sake of display. The distinguished crossing from $T$ is shaded by red, and two other crossings on the $i$-th and $(i+1)$-th strands are shaded by blue.}
        \label{fig:local flip twist}
    \end{figure}

There is a distinguished crossing in the diagram at each step, which induces a $\{0,1\}$-grading on the corresponding chain complex. As explained in \cite[Section 3.5]{morrison2022invariants} (see also \cite{elias2010rouquier}\cite[Section 3.2]{ren2025intrinsic}), the induced chain map in the first step can be chosen to be filtered with respect to the $\{0,1\}$-grading coming from the distinguished crossing. The second step does not involve the distinguished crossing, so the induced map is automatically filtered. The last step is exactly the situation studied in Section~\ref{sec:flip 2 strands}. By Proposition~\ref{prop:flip2strands}, we can replace the Reidemeister II map by a filtered map. Composing the three maps gives the desired filtered map $\rho_T'$.
\end{proof}

\begin{proof}[Proof of Theorem~\ref{thm: filtered map}]

We realize the map $\rho$ induced by the flip cobordism as the composition of the three moves in Construction~\ref{con:flip cobordism}. In Figure~\ref{fig:decompose flip cobordism}, each stage of the complex carries a $\{0,1\}^n$-grading inherited from the crossings originally in $D$. We still call this the cubical grading. It is clear that the first move in Construction~\ref{con:flip cobordism} does not involve the original crossings in $D$, so it preserves the cubical grading. The chain map induced by the third move in Construction~\ref{con:flip cobordism}, as explained in \cite[Section 3.5]{morrison2022invariants}, can also be chosen to be filtered with respect to the cubical grading. The second move in Construction~\ref{con:flip cobordism} is a composition of local moves as in Proposition~\ref{prop:flip k strands}, so the induced map can be replaced by a filtered map with respect to the cubical grading. Composing these three maps gives a filtered map $\rho'$ with respect to the cubical grading that is chain homotopic to $\rho$. Moreover, $\rho$ preserves the homological grading, so $\rho'$ also preserves the homological grading. Note that the homological grading is the sum of the entries of the cubical grading (up to an overall shift), so $\rho'$ must preserve the cubical grading. 
\end{proof}

\section{Planar webs}\label{sec:flip planar web}

In Section~\ref{sec: flip n strand}, we have reduced the computation of the flip map on a link diagram to a computation on planar $\gl_N$ webs. In this section, we study the flip map on planar webs and prove Theorem~\ref{thm:main gln web}.

Following Section~\ref{sec: flip n strand}, let $D$ be a diagram of a link $K$ obtained as the closure of a coherently oriented braid with $n$ crossings, and let $D^*$ be its flip diagram. Let $v\in \{0,1\}^n$ be a vertex of the cube, let $\Gamma=D_v$ be the $\gl_N$ web arising at $v$ in the cube of resolutions of $D$, and let $\Gamma^*$ be the flip web of $\Gamma$ arising at the same vertex in the cube of resolutions of $D^*$. In Section~\ref{sec:bg flip}, we have defined an isomorphism \[\eta_\Gamma\colon\lrb{\Gamma}\to\lrb{\Gamma^*}\]by reflecting generating foams. There is another map \[\rho_\Gamma\colon\lrb{\Gamma}\to\lrb{\Gamma^*}\]arising as the restriction of the modified map $\rho'$ that we constructed in Section~\ref{sec: flip n strand} with a sign correction:\begin{equation}\label{eqn:def of rho_gamma}
\rho_\Gamma=(-1)^{n_-(v)}\rho'\big|_{\lrb{\Gamma}}.
\end{equation}Here $n_-(v)$ is the number of $1$-resolutions on negative crossings; this term arises from the sign appearing in Proposition~\ref{prop:flip2strands}. Our goal is to compare these two maps.

Tracing the construction of the modified map $\rho'$ in Theorem~\ref{thm: filtered map}, we see that the map $\rho_\Gamma$ is again a composition of three chain maps. The first one is the same as the first step in Construction~\ref{con:flip cobordism}, induced by Reidemeister II moves that introduce canceling pairs of half twists. The second one is a composition of the graded pieces of the map described in Proposition~\ref{prop:flip k strands}. The third one is essentially induced by the slide map studied in \cite{stroppel2024braiding}. In particular, the following definition records the atomic pieces of the second move in Construction~\ref{con:flip cobordism}, which is further a composition of slide maps in \cite{stroppel2024braiding} and a twist map in Definition~\ref{defn:fork twist}.

\begin{defn}\label{defn:n strand twist}
    Let $w$ be the web with one $2$-labeled edge that arises as a resolution of the $k$-strand braid $T$ in Proposition~\ref{prop:flip k strands}. The map \[\rho_{w}\colon\lrb{\Delta_k^{-1}\circ w\circ \Delta_k}\to\lrb{w^*}\]is defined as the filtered piece of the modified map $\rho'_T$ in Proposition~\ref{prop:flip k strands}.
\end{defn}

For a closed $\gl_N$ web $\Gamma$, the map $\rho_\Gamma$ can be obtained as a composition of several twist maps from Definition~\ref{defn:n strand twist} and slide maps from Proposition~\ref{prop:atomicslide}.

\begin{thm}\label{thm:flip planar web}
    Let $\Gamma$ be the $\gl_N$ web as above. Then $\eta_{\Gamma}^{-1}\circ\rho_\Gamma$ is diagonalizable with eigenvalues $\pm 1$.
\end{thm}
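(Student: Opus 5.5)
We plan to reduce Theorem~\ref{thm:flip planar web} to a naturality statement for MOY foams, which the introduction announces as Proposition~\ref{prop:flip elementary gln foam}. For an elementary coherently oriented web $\Gamma$ and a MOY foam $F\colon U\to\Gamma$ from an unlink $U$, let $F^*\colon U^*\to\Gamma^*$ be its reflection. By construction, $\eta_\Gamma(\lrb{F}(x))=\lrb{F^*}(\eta_U(x))$. The key claim is that the maps $\rho_\Gamma$ satisfy the analogous naturality up to an explicit sign:
\[\rho_{\Gamma_2}\circ\lrb{F}=\epsilon(F)\,\lrb{F^*}\circ\rho_{\Gamma_1}\]
for each elementary MOY foam $F\colon\Gamma_1\to\Gamma_2$. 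The elementary foams are the zip and unzip foams, the digon creation and annihilation foams, the six-valent foam, the four-valent foam, and the dots. Here $\epsilon(F)\in\{\pm1\}$ should depend only on the type of the local piece and on which strands it involves; it should not depend on the surrounding web.

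\textbf{Step 1 (local naturality).} Since $\rho_\Gamma$ is a composition of the twist maps of Definition~\ref{defn:n strand twist}, slide maps from Proposition~\ref{prop:atomicslide}, and half sweep-around maps, we commute each elementary Soergel generator past each of these pieces one at a time. For slide maps, the needed naturality is in \cite{stroppel2024braiding}. For the twist maps, we write $\tw$ in Soergel calculus and compute directly how a start dot, end dot, merge, split, six-valent vertex, or four-valent vertex is conjugated by the half twist $\Delta_k$. The expected answer is that the generator is sent to its mirror (the color $i$ becomes $k-i$), possibly with a sign such as the sign of the black-dot convention in Definition~\ref{defn:fork twist}. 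This commutation needs to hold on the nose, not just up to homotopy. That is plausible because the maps in question are between one-term complexes (the graded pieces), just as in the argument around Figure~\ref{fig:commuteenddot}. We expect this step to be the main obstacle. It requires a careful bookkeeping of signs in Soergel calculus, especially for the six-valent vertex and for the interaction with the $(-1)^{n_-(v)}$ correction in (\ref{eqn:def of rho_gamma}). We would first compute the $k=2$ and $k=3$ cases by hand and then argue locality for general $k$.

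\textbf{Step 2 (unlinks).} For an unlink $U$, a crossingless diagram, $\rho_U$ agrees with $\eta_U$ up to a sign on each monomial generator. This follows by functoriality, as in Example~\ref{ex:flip unknot}, since there is no room for homotopies. For the same reason, dots on circles are preserved.

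\textbf{Step 3 (assembling).} By Lemma~\ref{lem:MOY foam generates eco webs}, $\lrb{\Gamma}$ is spanned by elements $\lrb{F}(x)$, where $F$ is a composition of elementary MOY foams out of an unlink and $x$ is a monomial in $\lrb{U}$. By Steps 1 and 2, $\eta_\Gamma^{-1}\rho_\Gamma$ sends each such element to $\pm$ itself, with sign $\prod\epsilon$ over the pieces of $F$ times the unlink sign. These spanning vectors lie in the $\pm1$-eigenspaces $E_\pm$, so $\lrb{\Gamma}=E_++E_-$. In particular $(\eta_\Gamma^{-1}\rho_\Gamma)^2=\idd$.

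Diagonalizability over $\Z$ then needs $E_+\oplus E_-=\lrb{\Gamma}$, not merely a finite-index sum. For this we would choose an actual $R$-basis of $\lrb{\Gamma}$ consisting of such foam images. The categorified MOY isomorphisms of \cite{mackaay2009sl}, used via \cite{Rasmussen2006SomeDO} without the square relation, produce an explicit basis of this form, since each MOY reduction is realized by an isomorphism built out of elementary foams. Each basis vector is then an eigenvector, which gives the theorem.
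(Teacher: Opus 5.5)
Your Steps 1--2 and the first half of Step 3 follow the paper's argument. Local naturality for the MOY generators (Proposition~\ref{prop:flip elementary gln foam} together with the slide naturality of \cite{stroppel2024braiding}), the unlink case of Example~\ref{ex:flip unknot}, and Lemma~\ref{lem:MOY foam generates eco webs} combine to give a spanning set of $\lrb{\Gamma}$ consisting of $\pm1$-eigenvectors of $\eta_\Gamma^{-1}\circ\rho_\Gamma$. The gap is in your final paragraph.

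First, you misplace the difficulty. The spanning set already gives $\lrb{\Gamma}=E_++E_-$. The sum is direct because $\lrb{\Gamma}$ is a free $R$-module \cite[Theorem 3.30]{robert2020closed}: if $x\in E_+\cap E_-$ then $2x=0$, so $x=0$. There is no finite-index issue. What is actually missing is that $E_+$ and $E_-$ are free, so that an eigenbasis exists at all.

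Second, your way of producing an eigenbasis does not work as stated. The categorified MOY calculus only gives \emph{generation}, which is exactly what Lemma~\ref{lem:MOY foam generates eco webs} asserts. The relation in which the six-valent foam enters is an isomorphism of direct sums, $\lrb{\Gamma_{iji}}\oplus\lrb{\Gamma_j}\cong\lrb{\Gamma_{jij}}\oplus\lrb{\Gamma_i}$. If you push a basis of the left-hand side through it and project, you get a spanning set of $\lrb{\Gamma_{jij}}$ with $\operatorname{rk}\lrb{\Gamma_i}$ too many elements, not a basis. A spanning set of a free module over $\Z[X_1,\dots,X_N]$ need not contain a basis (already $\{2,3\}$ spans $\Z$). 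So nothing guarantees a basis made of foam images.

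The paper closes this step algebraically, via Lemma~\ref{lem: decompose into eigenspaces}. The modules $E_\pm$ are direct summands of the finitely generated free module $\lrb{\Gamma}$, hence finitely generated projective. They are therefore free, by Quillen--Suslin or by graded Nakayama in the graded setting. The union of bases of $E_+$ and $E_-$ is then the required eigenbasis. If you replace your last paragraph with this argument, the proof goes through. You can run it directly on $\lrb{\Gamma}$, without lifting to $V'_\Gamma$ and its radical as the paper does.
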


\begin{rem}\label{rem:flip topological argument}
    Theorem~\ref{thm:flip planar web} is expected to be true (at least up to sign) by the following topological argument. Let $F\subset I\times \RR^3$ be the foam given by the trace of rotating $\Gamma$ around the $x$-axis. It can be thought of as a cobordism between $\Gamma$ and $\Gamma^*$, viewed as webs embedded in $\RR^3$. The map $\rho_\Gamma$ should be thought of as being induced by $F$. Now let $W$ be a foam in $\RR^3$ bounded by $\Gamma$; then $W\cup_\Gamma F$ is a foam bounded by $\Gamma^*$. Undoing the rotation yields an isotopy (as foams in $I\times\RR^3$) between $W\cup_\Gamma F$ and $W^*$, the foam in $\RR^3$ obtained by reflecting $W$ across the $xz$-plane. If we had a functorial $\gl_N$ theory for webs in $\RR^3$ and foams in $I\times \RR^3$, this argument would be enough to prove Theorem~\ref{thm:flip planar web}. In fact, this is how we compute the flip map for the crossingless unknot diagram in Example~\ref{ex:flip unknot}. Unfortunately, this level of functoriality seems to be currently unavailable beyond $\gl_2$ \cite{queffelec2022gl2}. Moreover, it seems that additional data such as cyclic orders and framings are required to fix the sign. We hope the present paper (in particular, the calculations in Section~\ref{sec:appendix}) can contribute to the development of such a functorial theory.
\end{rem}

Our strategy to prove Theorem~\ref{thm:flip planar web} is to establish several commutativity results between the half twist conjugation, the local model for the flip map, and local MOY foams. By Lemma~\ref{lem:MOY foam generates eco webs}, the state space of an elementary coherently oriented $\gl_N$ web is generated by MOY foams, so the commutativity results will allow us to reduce the computation to unlinks. The following proposition is the main technical result of this section, where we check the commutativity for each local piece of a MOY foam. 

\begin{prop}\label{prop:flip elementary gln foam}
    Let $\Gamma_0$ and $\Gamma_1$ be coherently oriented planar $\gl_N$ webs related by $W$, one of the local pieces of MOY foams introduced in Section~\ref{sec:bg MOY foams}. Let $\Delta$ be the positive half twist of the corresponding number of strands. Then the following diagram commutes up to homotopy and sign. 

    \[
        \tikzexternaldisable
        \begin{tikzcd}
            \lrb{\Gamma_0^*} \arrow[r, "W^*"]                & \lrb{\Gamma_1^*}\\
            \lrb{\Delta^{-1}\circ\Gamma_0\circ\Delta} \arrow[u, "\rho_{\Gamma_0}"] \arrow[r, "W"] &  \lrb{\Delta^{-1}\circ\Gamma_1\circ\Delta}\arrow[u, "\rho_{\Gamma_1}"]
        \end{tikzcd}
        \tikzexternalenable
    \]

    The sign is determined as follows. For a foam $W\colon\Gamma_-\to\Gamma_+$, denote the union of $2$-labeled and $3$-labeled facets of $W$ by $W^{(2)}$, viewed as a cobordism from $\Gamma_-^{(2)}$ to $\Gamma_+^{(2)}$, the union of $2$-labeled edges in $\Gamma_\pm$. Then the sign is given by $(-1)^{\chi^-_2(W)}$, where $\chi^-_2(W)=\chi(W^{(2)})-\chi(\Gamma_-^{(2)})$.
\end{prop}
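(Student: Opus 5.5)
The plan is to reduce the proposition to a finite list of local checks, one per local piece of a MOY foam, and to verify each check in Soergel calculus. All maps involved are built from pieces of the half twist. These are the slide maps of Proposition~\ref{prop:atomicslide}, the RII and RIII maps of Propositions~\ref{prop:RIImove} and~\ref{prop:RIIImove}, and the fork twist $\tw$ of Definition~\ref{defn:fork twist}. The local piece $W$ is itself a Soergel diagram: a start or end dot, a merge or split, a six-valent vertex, a four-valent vertex, or a polynomial $x_j$.

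\textbf{Step 1: localization.} Since $\rho_{\Gamma}$ is a composition of twist maps $\rho_w$ (Definition~\ref{defn:n strand twist}) and slides, and $W$ is supported near at most three adjacent strands, I first slide $\Delta$ past everything away from the support of $W$. By naturality of the slide maps with respect to foams far away, the diagram reduces to $\Delta_k$ on $k\le 3$ strands, or $k=4$ for the four-valent vertex. I would use the standard factorization $\Delta_k=\Delta_{k-1}\cdot(\sigma_{k-1}\cdots\sigma_1)$ to peel off the strands not touching $W$. The commutation of $W$ with a full strand passing across it is then exactly a slide homotopy from \cite{stroppel2024braiding}, where $W$ commutes with $\slide$ on the nose or up to homotopy.

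\textbf{Step 2: the generators, case by case.}
\begin{enumerate}[(a)]
\item \emph{Dots $x_j$.} The half twist sends the $j$-th strand to the $(k+1-j)$-th strand. Dots commute with slides and twists up to homotopy via the usual dot-sliding relations, so the sign is $+1$. This agrees with $\chi_2^-=0$.
\item \emph{Start and end dots (zip and unzip).} On two strands this is precisely diagram~(\ref{eqn:diagram_enddot}) and its start-dot counterpart~(\ref{eqn:diagram_startdot}), which were shown to be commutative, respectively anti-commutative. After the sign correction $(-1)^{n_-(v)}$ built into $\rho_\Gamma$, the remaining sign should match $(-1)^{\chi_2^-}$. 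Here $\chi(W^{(2)})-\chi(\Gamma_-^{(2)})$ is $0$ for the zip, where a disk is glued to nothing versus an interval, and differs by parity for the unzip. Carrying out this parity bookkeeping is part of the check.
\item \emph{Merge and split (digon foams).} These are compositions of the two-strand twist with a dot on the thick edge. I would compute $\tw$ conjugating the merge directly, using the black-dot shorthand~(\ref{eqn:defn_black_dot}) and its rotation rule, which introduces a sign. The digon annihilation changes $\chi(W^{(2)})$ relative to the source by $-1$, which should produce the predicted $-1$.
\item \emph{Six-valent and four-valent vertices.} These live on three and four strands. Here $\Delta_3$ conjugation uses RIII maps. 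For each vertex I would compose the twist and slide maps, restrict to the relevant cubical degree, and simplify the resulting Soergel diagram using the relations of \cite[Definition 4.4]{stroppel2024braiding}, namely the Zamolodchikov-type relations and the associativity of the six-valent vertex, to reach $\pm W^*$.
\end{enumerate}
For every check, it suffices to verify equality after post-composing with the homotopy inverse, exactly as was done in Figure~\ref{fig:commuteenddot}. The complexes in the relevant degree are one-term, so there is no room for homotopy and equalities can be checked on the nose.

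\textbf{Step 3: the sign formula.} I would observe that $\chi_2^-$ is additive under vertical composition of foams. This follows from the inclusion–exclusion identity
\[
\chi(W_1\cup W_2)=\chi(W_1)+\chi(W_2)-\chi(\Gamma_{\mathrm{mid}}^{(2)}).
\]
Hence the signs computed for the generators assemble consistently, and it suffices to confirm that each generator's sign equals $(-1)^{\chi_2^-}$. For the six-valent foam, the $3$-labeled facet and the $2$-labeled pieces must be counted carefully to predict the sign.

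The main obstacle will be case (d), especially the six-valent vertex. The half twist on three strands involves three crossings and RIII maps whose filtered pieces produce many terms. Simplifying the composite Soergel diagram to $\pm$ the reflected vertex requires nontrivial relation juggling. Tracking signs from the black-dot convention and the $(-1)^{n_-(v)}$ correction simultaneously is where I expect errors. I would first try to deduce this case from the others: express the six-valent vertex, up to lower-order terms, via the idempotent decomposition $B_sB_tB_s\cong B_{sts}\oplus B_s$. Since the merge, split and dot cases are already proven, only the projection onto $B_{sts}$ would then need to be checked directly.
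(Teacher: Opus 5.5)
Your overall route is the paper's. You check the square separately for each local piece in Soergel calculus, reuse (\ref{eqn:diagram_enddot}) and (\ref{eqn:diagram_startdot}) for the end and start dots, and compute the fork-twist conjugate of the merge and split directly with the black-dot shorthand. For the six- and four-valent vertices you compare $\rho_{\Gamma_1}\circ(\Delta^{-1}W\Delta)\circ\rho_{\Gamma_0}^{-1}$ with $\pm W^*$ on the nose, which is legitimate because the two ends are one-term complexes.

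Two concrete claims are wrong, however. In (b) the parities are reversed. The start dot is the zip: its $2$-labeled facet is a disk and the source has no thick edge, so $\chi_2^-=1$. The end dot (unzip) has $\chi_2^-=1-1=0$. This is exactly what matches the anti-commutativity of (\ref{eqn:diagram_startdot}) and the commutativity of (\ref{eqn:diagram_enddot}). With your assignment, the zip square would be predicted to commute, contradicting the diagram you cite. In (c), the merge and split are the trivalent vertices $B_iB_i\to B_i$ and $B_i\to B_iB_i$, not a twist composed with a dot. The direct check you then describe is the right one (the paper's square commutes on the nose), with $\chi_2^-=-1$ for the merge and $0$ for the split.

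The real missing content is in (d). Only the ends are one-term. The composite runs through the complexes of $\Delta^{-1}\Gamma\Delta$, and several resolutions of the half-twist crossings in homological degree zero contribute. The paper makes this tractable with three observations:
\begin{enumerate}
\item The outer RII maps only see palindromic resolutions and act as a partial trace.
\item The number $h_3$ of $1$-resolutions among the crossings of $\Delta_3$ or $\Delta_3'$ is preserved along the movie.
\item The $h_3\ge 2$ parts vanish by the shape of the slide map.
\end{enumerate}
The $h_3=0$ part then gives twice the vertex (a $64$-term expansion cut down by a $C_6$ symmetry), and the $h_3=1$ part gives minus the vertex. Neither level alone is correct; only their sum is. For the four-valent vertex, the paper first factors the four-strand conjugation into the $(2,2)$ shuffle and two fork twists, after which everything sits in $h_4=2$. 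Tracking a single resolution, as suffices in the two-strand case, gives the wrong answer here.

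Your fallback via $B_sB_tB_s\cong B_{sts}\oplus B_s$ does not remove this work as stated, since the $B_{sts}$-component is the six-valent vertex itself. It could be completed as follows. The assignment $W\mapsto\rho_{\Gamma_1}\circ(\Delta^{-1}W\Delta)\circ\rho_{\Gamma_0}^{-1}$ is an honest degree-preserving functor on these one-term complexes, since no homotopies survive there. Degree-zero morphisms between the two three-letter Bott--Samelson objects are spanned by the six-valent vertex, so the transported vertex is a scalar multiple of $W^*$. The scalar is then forced by the relation expressing a six-valent vertex with a dot on one leg through trivalent vertices and dots, all of whose terms have $\chi_2^-$ of the same parity, together with the cases already checked. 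That would avoid the explicit expansion, but it is not the argument you proposed.
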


\begin{proof}
    There are several cases to check. The proof is a laborious but fairly straightforward computation, which we postpone to Section~\ref{sec:appendix}. Note that the signs are also checked there case by case.

    \begin{itemize}
        \item End dots. This is Lemma~\ref{lem:twist end}; in this case, $\chi_2^-(W)=0$.

        \item Start dots. This is Lemma~\ref{lem:twist start}; in this case, $\chi_2^-(W)=1$.

        \item Merge vertices. This is Lemma~\ref{lem:twist merge}; in this case, $\chi_2^-(W)=-1$.

        \item Split vertices. This is Lemma~\ref{lem:twist split}; in this case, $\chi_2^-(W)=0$.

        \item Six-valent vertices. These are Lemmas~\ref{lem:six valent} and~\ref{lem:six valenta}; in these cases, $\chi_2^-(W)=-2$.

        \item Four-valent vertices. This is Lemma~\ref{lem:four valent}; in this case, $\chi_2^-(W)=0$.
    \end{itemize}
\end{proof}

\begin{proof}[Proof of Theorem~\ref{thm:flip planar web}]
    Let $V'_{\Gamma}\subset V_\Gamma$ be the free $R$-module generated by MOY foams bounding $\Gamma$ (and capping off the unlinks). The bilinear form $\langle-,-\rangle$ on $V_\Gamma$ defined in Section~\ref{sec:bg gln homology} restricts to a bilinear form on $V'_{\Gamma}$. For each MOY foam $W$ from an unlink $U_l$ to $\Gamma$, the following diagram commutes up to homotopy and sign by Proposition~\ref{prop:flip elementary gln foam} and \cite[Proposition 6.13]{stroppel2024braiding}: \[
        \tikzexternaldisable
        \begin{tikzcd}
            \lrb{U_l^*} \arrow[r, "W^*"]                & \lrb{\Gamma^*}\\
            \lrb{U_l} \arrow[u, "\rho_{U_l}"] \arrow[r, "W"] &  \lrb{\Gamma}\arrow[u, "\rho_{\Gamma}"]
        \end{tikzcd}
        \tikzexternalenable
    \]
    
    Example~\ref{ex:flip unknot} shows that the flip cobordism on unlinks induces the same map as $\eta_{U_l}$. Let $\overline{W}$ be the foam bounding $\Gamma$ given by capping off $W$ (possibly with dots). Then the previous argument shows $\rho_\Gamma(\overline{W})=\pm \eta_\Gamma(\overline{W})$, so $\eta_\Gamma^{-1}\circ\rho_\Gamma$ is diagonalizable on $V'_\Gamma$ with eigenvalues $\pm 1$. By Lemma~\ref{lem:MOY foam generates eco webs}, the quotient $V'_{\Gamma}/\operatorname{rad}\langle-,-\rangle$ gives the state space $\lrb{\Gamma}$. Now the conclusion follows from the following algebraic lemma\footnote{In fact, we are using the \textit{graded} version of this lemma for graded modules over $R=\Z[X_1,\dots,X_N]$ with $\deg X_i=2$; the proof remains the same.} and the fact that the state space is a free $R$-module \cite[Theorem 3.30]{robert2020closed}.
\end{proof}

\begin{lem}\label{lem: decompose into eigenspaces}
    Let $R$ be a commutative ring in which any finitely generated projective $R$-module is free, and where $2$ is not a zero-divisor. Let $V$ be a (not necessarily finitely generated) free $R$-module with a symmetric bilinear form $\langle-,-\rangle\colon V\otimes V\to R$, and let $\operatorname{rad}(V)$ be its radical. Assume that $V'=V/\operatorname{rad}(V)$ is a finitely generated free $R$-module. Let $f\colon V\to V$ be a diagonalizable linear map with eigenvalues $\pm 1$ satisfying $f(\operatorname{rad}(V))\subseteq \operatorname{rad}(V)$. Then $f$ induces a map $f'\colon V'\to V'$ that is still diagonalizable with eigenvalues $\pm 1$.
\end{lem}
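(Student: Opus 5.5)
The plan is to produce the eigenspace decomposition on $V'$ directly from the one on $V$, and then use the hypotheses on $R$ only to turn the two summands into free modules with bases.

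First, $f'$ is well defined. Since $f(\operatorname{rad}(V))\subseteq\operatorname{rad}(V)$, the map $f$ descends to an $R$-linear map $f'\colon V'\to V'$ with $f'\circ p=p\circ f$, where $p\colon V\to V'$ is the quotient map. By hypothesis $f$ is diagonalizable with eigenvalues $\pm1$, so we can write $V=V_+\oplus V_-$, where $f$ acts by $+1$ on $V_+$ and by $-1$ on $V_-$ (each $V_\pm$ is spanned by the basis vectors with the corresponding eigenvalue). Set
\[
V'_\pm\coloneqq\{x\in V'\mid f'(x)=\pm x\}.
\]
From $f'\circ p=p\circ f$ we get $p(V_\pm)\subseteq V'_\pm$. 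Because $p$ is surjective, this gives $V'=p(V_+)+p(V_-)\subseteq V'_++V'_-$.

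Next we show that this sum is direct. Suppose $x\in V'_+\cap V'_-$. Then $x=f'(x)=-x$, so $2x=0$. The module $V'$ is free and $2$ is not a zero-divisor in $R$, so multiplication by $2$ is injective on $V'$, and hence $x=0$. Therefore $V'=V'_+\oplus V'_-$.

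It also follows that $V'_\pm=p(V_\pm)$, although we do not strictly need this. Indeed, if $x\in V'_+$, write $x=a+b$ with $a\in p(V_+)$ and $b\in p(V_-)$. Then $b=x-a\in V'_+\cap V'_-=0$, so $x\in p(V_+)$; the argument for $V'_-$ is symmetric.

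Finally, each $V'_\pm$ is a direct summand of the finitely generated free module $V'$. It is therefore finitely generated and projective, and so free by the hypothesis on $R$. Choosing bases of $V'_+$ and $V'_-$ and concatenating them gives a basis of $V'$ in which $f'$ is diagonal with entries $\pm1$, as claimed.

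For the graded version needed in the proof of Theorem~\ref{thm:flip planar web}, the same argument goes through verbatim with graded modules over $R=\Z[X_1,\dots,X_N]$. Here $f$ is homogeneous of degree $0$, so the eigenspaces $V'_\pm$ are graded submodules. The one point needing care is that a finitely generated graded projective module over this graded ring with $R_0=\Z$ is graded free; this holds by the graded Nakayama lemma applied to $V'_\pm\otimes_R\Z$, using that projective modules over $\Z$ are free. I expect this graded freeness to be the only step requiring any justification beyond formal manipulation; everything else uses only the stated hypotheses.
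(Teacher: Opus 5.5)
Your proof is correct and follows the paper's outline. You split $V=V_+\oplus V_-$, descend the splitting to $V'$ using that $2$ is a non-zero-divisor, and conclude that the direct summands $V'_\pm$ of the finitely generated free module $V'$ are projective, hence free. The only minor difference is how the splitting reaches the quotient. The paper first splits the radical as $\operatorname{rad}(V)=N_+\oplus N_-$ with $N_\pm=\operatorname{rad}(V)\cap V_\pm$, implicitly using that $2v\in\operatorname{rad}(V)$ forces $v\in\operatorname{rad}(V)$, and then sets $V'_\pm=V_\pm/N_\pm$. You instead get $V'=V'_+\oplus V'_-$ directly from the $2$-torsion-freeness of $V'$, so the bilinear form plays no role. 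Your graded-Nakayama remark also spells out what the paper's footnote leaves implicit.
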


\begin{proof}
    Write $N=\operatorname{rad}(V)$. Let $V_\pm$ be the $\pm 1$-eigenspaces of $f$. Since $f$ is diagonalizable with eigenvalues $\pm 1$, we have $V=V_+\oplus V_-$. Note that $2$ is not a zero-divisor in $R$, and it is easy to see \[N=N_+\oplus N_-,\]where $N_\pm\coloneqq N\cap V_\pm$, and the quotient also splits as
    \[V'=V/N\cong V'_+\oplus V'_-,\]where $V'_\pm\coloneqq V_\pm/N_\pm$,
    and by construction this is the decomposition of $V'$ into the $\pm 1$-eigenspaces of $f'$. Now $V'$ is finitely generated and free, and $V'_\pm$ are direct summands of $V'$, hence finitely generated projective, hence free by the hypothesis on $R$. Choosing bases of $V'_+$ and $V'_-$ and taking their union yields a basis of $V'$ in which $f'$ is diagonal with entries $\pm 1$. Thus $f'$ is diagonalizable with eigenvalues $\pm 1$.
\end{proof}

In fact, we can determine the sign in Theorem~\ref{thm:flip planar web}.

\begin{prop}\label{prop: foam global sign}
    Let $W$ be a foam bounding $\Gamma$ obtained by capping off a MOY foam between an unlink and $\Gamma$, viewed as a generator of $\lrb{\Gamma}$, i.e., a foam from the empty set to $\Gamma$. Then \[\eta_{\Gamma}^{-1}\circ\rho_{\Gamma}([W])=(-1)^{\chi^{-}_2(W)}[W].\]
\end{prop}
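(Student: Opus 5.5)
The plan is to decompose $W$ into elementary pieces and track the sign piece by piece with Proposition~\ref{prop:flip elementary gln foam}, using additivity of $\chi^-_2$ under composition. Write the capped-off foam as $W=W_m\circ\cdots\circ W_1\circ C$. Here $C\colon\emptyset\to U_l$ is a union of (possibly dotted) $1$-labeled cups creating the unlink, and each $W_j\colon\Gamma_{j-1}\to\Gamma_j$ is one of the local MOY pieces (zip, unzip, merge, split, six-valent, four-valent) tensored with identity foams, with $\Gamma_0=U_l$ and $\Gamma_m=\Gamma$. Identity sheets and dots carry no $2$-labeled facets, so they contribute nothing to $\chi^-_2$. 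The cups in $C$ are $1$-labeled, and $U_l^{(2)}=\emptyset$, so $\chi^-_2(C)=0$ and the evaluation of $\chi^-_2(W)$ starts from zero.

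\textbf{Additivity.} For composable foams $A\colon\Gamma_-\to\Gamma'$ and $B\colon\Gamma'\to\Gamma_+$, the subcomplex $(B\circ A)^{(2)}$ is $A^{(2)}\cup_{\Gamma'^{(2)}}B^{(2)}$, glued along a union of $2$-labeled arcs and circles. Inclusion--exclusion gives
\[\chi((B\circ A)^{(2)})=\chi(A^{(2)})+\chi(B^{(2)})-\chi(\Gamma'^{(2)}),\]
and therefore $\chi^-_2(B\circ A)=\chi^-_2(A)+\chi^-_2(B)$. One point needs care: the local pieces must be spliced into the whole web by identity foams. The identity on the rest of the web contributes $\chi(\text{edges}\times I)-\chi(\text{edges})=0$, so the local values of $\chi^-_2$ listed in the proof of Proposition~\ref{prop:flip elementary gln foam} are also the global values. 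Gluing along $2$-labeled endpoints of arcs that meet the local disc boundary is handled the same way, because the product structure near the boundary has Euler characteristic equal to that of its base.

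\textbf{Stacking the squares.} Next I stack the commutative squares of Proposition~\ref{prop:flip elementary gln foam} for $W_1,\dots,W_m$. This gives
\[\rho_\Gamma\circ\lrb{W_m\circ\cdots\circ W_1}\simeq(-1)^{\sum_j\chi^-_2(W_j)}\lrb{W_m^*\circ\cdots\circ W_1^*}\circ\rho_{U_l},\]
where we use that $\rho_{\Gamma_j}$ are exactly the graded pieces of $\rho'$, as in the proof of Theorem~\ref{thm:flip planar web}. All complexes here are planar webs concentrated in a single homological degree, so homotopy is equality. By Example~\ref{ex:flip unknot}, $\rho_{U_l}$ agrees with $\eta_{U_l}$ and therefore sends $[C]$ to $[C^*]$. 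For the unlink I would verify this with the sign convention \eqref{eqn:def of rho_gamma}; there are no crossings in the relevant vertex, so no $(-1)^{n_-(v)}$ correction enters. Since $\eta$ is defined by reflecting generating foams, $W_m^*\circ\cdots\circ W_1^*\circ C^*=W^*$ represents $\eta_\Gamma([W])$. Combined with additivity this gives $\rho_\Gamma([W])=(-1)^{\chi^-_2(W)}\eta_\Gamma([W])$, which is the claim.

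\textbf{Main obstacle.} The hard part is the bookkeeping of conventions. First, $\rho_\Gamma$ as a map $\lrb{\Gamma}\to\lrb{\Gamma^*}$ must be compatible with the conjugated webs $\Delta^{-1}\Gamma_j\Delta$ in Proposition~\ref{prop:flip elementary gln foam}, which requires composing with the first and third steps of Construction~\ref{con:flip cobordism}. These steps commute with the local foams exactly, with no sign, by \cite[Proposition 6.13]{stroppel2024braiding}. Second, the reflected piece $W_j^*$ produced by $\eta$ must be the same foam that appears as $W^*$ in the proposition, and not one that differs by an orientation-induced sign as in Lemma~\ref{lem:reflecting closed foams}. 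I would handle this by checking that the signs recorded case by case in Section~\ref{sec:appendix} are stated relative to the reflected foam, so the sign $\pm$ in Lemma~\ref{lem:reflecting closed foams} never enters. Finally, I would confirm that $\chi^-_2$ is independent of the chosen decomposition, which follows from its intrinsic definition.
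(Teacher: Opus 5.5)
Your proposal is correct and follows essentially the same route as the paper. The paper proves additivity of $\chi^-_2$ by the same inclusion--exclusion computation and combines it with the signed squares of Proposition~\ref{prop:flip elementary gln foam}; the stacking of those squares and the unlink base case $\rho_{U_l}=\eta_{U_l}$ come from the proof of Theorem~\ref{thm:flip planar web}. Your extra bookkeeping makes explicit what the paper leaves implicit: splicing local pieces with identity foams, getting equality rather than homotopy because source and target are single-degree, and the strict compatibility of $\eta$ with composition of reflected foams.
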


\begin{proof}
    Note that $\chi^-_2(\cdot)$ is additive under composition of foams: for foams\[W\colon \Gamma_0\to \Gamma_1,\, W'\colon \Gamma_1\to \Gamma_2,\] we have \begin{align*}
        \chi_2^-(W\cup W')&=\chi((W\cup W')^{(2)})-\chi(\Gamma_0^{(2)})\\
        &=\chi(W'^{(2)})+\chi(W^{(2)})-\chi(\Gamma_1^{(2)})-\chi(\Gamma_0^{(2)})\\
        &=\chi_2^-(W)+\chi_2^-(W').
    \end{align*}
    
    The conclusion then follows from the additivity above and the signs in Proposition~\ref{prop:flip elementary gln foam}. Note that the source web is empty, so it contains no $2$-labeled edges. In this case, $\chi^-_2(W)$ is equal to the Euler characteristic of the union of the $2$-labeled and $3$-labeled facets of $W$. 
\end{proof}

\begin{proof}[Proof of Theorem~\ref{thm:main gln web}]
    By Equation (\ref{eqn:def of rho_gamma}), the map $\eta^{-1}\circ\rho'$ is the direct sum of maps on vertices:\[\eta^{-1}\circ\rho'=\bigoplus_{v\in\{0,1\}^n}(-1)^{n_-(v)}\eta^{-1}_{D_v}\circ\rho_{D_v}.\]Now the conclusion follows from Theorem~\ref{thm:flip planar web} and Proposition~\ref{prop: foam global sign}.
\end{proof}

\begin{proof}[Proof of Corollary~\ref{cor:flip trivial over F2}]
    When $D$ is obtained as the closure of a coherently oriented braid, the conclusion follows from Theorems~\ref{thm:main filtration} and \ref{thm:main gln web} after reducing modulo $2$. In the general case, we choose a sequence of Reidemeister moves between $D$ and a coherently oriented braid closure, and the conclusion follows from Proposition~\ref{prop:flip invariance} and functoriality.
\end{proof}

\begin{rem}
    Proposition~\ref{prop: foam global sign} determines the sign of the flip map on a generating set of $\lrb{\Gamma}$, and in principle determines the sign on the $\gl_N$ chain complex. However, it is currently unclear to the author how to write down a closed formula for the sign at the level of \textit{link homology} due to the following two additional steps: passing from $V_\Gamma$ to $\lrb{\Gamma}$ requires choosing a distinguished basis via Lemma~\ref{lem: decompose into eigenspaces}, and passing from $\lrb{D}$ to the link homology requires further computing homology. It remains an interesting question to further understand the $\pm 1$-eigenspaces on link homology.
\end{rem}

\begin{rem}\label{rem: half twists on Soergel mod}
The naturality result above can also be interpreted in the language of Soergel bimodules, which may be of independent interest. Let $\mathrm{SBim}_n$ be the category of Soergel bimodules of type $A_{n-1}$, whose bounded homotopy category $K^b(\mathrm{SBim}_n)$ carries the braided monoidal structure of \cite{stroppel2024braiding}. The positive half twist $\Delta_n$ defines an invertible object of $K^b(\mathrm{SBim}_n)$, and conjugation by it, $X\mapsto \Delta_n^{-1}\otimes X\otimes\Delta_n$, is a monoidal autoequivalence. On the other hand, the nontrivial automorphism $i\mapsto n-i$ of the Dynkin diagram of type $A_{n-1}$ induces a monoidal autoequivalence, sending the Bott--Samelson generator $B_i$ to $B_{n-i}$. These two autoequivalences already agree on objects: the half twist $\Delta_n$ lifts the longest element $w_0\in S_n$, whose conjugation action $s_i\mapsto s_{n-i}$ is precisely the diagram automorphism. Our computations (Proposition~\ref{prop:flip elementary gln foam}) upgrade this to a natural isomorphism between the two autoequivalences, compatible with the generating morphisms of Soergel calculus, up to signs that we determine explicitly. In this sense, the flip symmetry is essentially a consequence of the Dynkin diagram involution, cf. \cite[Remark 3.18]{queffelec2016sln}.
\end{rem}

\section{Naturality of the half twist slide}\label{sec:appendix}

In this section, we present the diagrammatic calculations that constitute the heart of the proof of Proposition~\ref{prop:flip elementary gln foam}. These calculations also establish the natural isomorphism described in Remark~\ref{rem: half twists on Soergel mod}.

\subsection{End and start dots}

\begin{lem}\label{lem:twist end}
    The following diagram commutes up to homotopy.
    \[\halftwistenddot\]
\end{lem}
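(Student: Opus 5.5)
I will reduce Lemma~\ref{lem:twist end} to the $2$-strand computation already carried out in the proof of Proposition~\ref{prop:flip2strands}, namely the commutativity of diagram~(\ref{eqn:diagram_enddot}). The end dot is the unzip foam from the web $w$ with a single thick edge between the $i$-th and $(i+1)$-th strands to the identity web. The vertical maps are $\rho_w$ from Definition~\ref{defn:n strand twist} and its counterpart for the identity web. Both are the filtered pieces of the three-step decomposition in the proof of Proposition~\ref{prop:flip k strands}:
\begin{enumerate}[i.]
\item slide the three crossings between strands $i,i+1$ to the left via Reidemeister III moves, whose filtered pieces are the slide maps of Proposition~\ref{prop:atomicslide};
\item cancel crossings away from strands $i,i+1$ via Reidemeister II moves;
\item apply the fork twist together with the final Reidemeister II move.
\end{enumerate}
I will stack the end dot against this factorization and check commutativity one square at a time.

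For step (i), naturality of the slide maps with respect to an end dot on the sliding strand is exactly the naturality established in \cite{stroppel2024braiding}; cf.\ \cite[Proposition 6.13]{stroppel2024braiding}, already invoked in the proof of Theorem~\ref{thm:flip planar web}. So this square commutes up to homotopy after transporting the dot through the slide. Step (ii) takes place on strands disjoint from the dot. The unzip foam is therefore a far-commuting morphism tensored with the identity, and the square commutes on the nose by the interchange law.

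Step (iii) is the genuinely new square, and it is precisely diagram~(\ref{eqn:diagram_enddot}). Its commutativity was shown by checking that $\mathrm{RII}\circ(\text{end dot})\circ\iota$ equals the end dot. I will reuse that identity, where $\iota=\tw^{-1}\circ\mathrm{RII}$ is the strong deformation retract inverse from Proposition~\ref{prop:2strands vsdr}, and then precompose with $\rho_0$ using $\iota\circ\rho_0\simeq\idd$. No sign arises here, consistent with $\chi_2^-(W)=0$, since the unzip foam has a single thick disk whose Euler characteristic cancels that of the thick edge.

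The main obstacle is in step (i). The maps $\rho_w$ are \emph{filtered pieces} of homotopy-perturbed maps, not the Reidemeister maps themselves. Homotopies for the full Reidemeister III maps therefore need not restrict to the graded pieces. I would handle this by working directly with the explicit atomic slide chain maps of Proposition~\ref{prop:atomicslide}, and composing the end dot with them diagrammatically in Soergel calculus. The target should be an on-the-nose equality, or a homotopy built from a single-step Soergel relation such as the needle or dot-barbell relation. This mirrors the trick in Figure~\ref{fig:commuteenddot} of reducing to one-term complexes, where no room for homotopy remains. I would also track the sign conventions of the cube differential $(-1)^{s(u,v)}$ to confirm that no stray sign appears.
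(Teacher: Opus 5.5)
Your step (iii) is exactly the paper's proof: the lemma is literally diagram~(\ref{eqn:diagram_enddot}). Its commutativity was established in the proof of Proposition~\ref{prop:flip2strands} by checking that $\mathrm{RII}\circ(\text{end dot})\circ\iota$ equals the end dot on the nose (precomposing with $\rho_0$ and using $\iota\circ\rho_0\simeq\idd$ then gives the square), so your argument is correct. Steps (i)--(ii) and the ``main obstacle'' you flag are not needed, because the half twist in this lemma is the $2$-strand one attached to the local piece; the paper handles the extension to $k$ strands separately, in the proof of Theorem~\ref{thm:flip planar web}, via \cite[Proposition 6.13]{stroppel2024braiding}.
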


\begin{proof}
    This is the diagram (\ref{eqn:diagram_enddot}), which we proved to be commutative in the proof of Proposition~\ref{prop:flip2strands}.
\end{proof}

\begin{lem}\label{lem:twist start}
    The following diagram commutes up to homotopy.
    \begin{equation*}
        \halftwiststartdot
    \end{equation*}
\end{lem}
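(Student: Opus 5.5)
Lemma~\ref{lem:twist start} is the start-dot analogue of Lemma~\ref{lem:twist end}, so we prove it the same way. The diagram in question is essentially diagram (\ref{eqn:diagram_startdot}) from the proof of Proposition~\ref{prop:flip2strands}. That proof records that this square \emph{anti}-commutes for the raw map $\rho_0$, and that it commutes once $\rho_0$ is replaced by $-\rho_0$ on the thick-edge resolution. This is exactly the sign correction absorbed into $\rho_\Gamma$ via the factor $(-1)^{n_-(v)}$ in (\ref{eqn:def of rho_gamma}), and it accounts for the sign $(-1)^{\chi_2^-(W)}=-1$ attached to start dots in Proposition~\ref{prop:flip elementary gln foam}. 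So the real content is to check the (anti)commutativity of (\ref{eqn:diagram_startdot}) honestly. We first do this for two strands and then extend it to $k$ strands.

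For two strands, we will not compare the two compositions directly. Instead we use the homotopy inverse $\iota=\tw^{-1}\circ\mathrm{RII}$ from (\ref{eqn:vsdr 2 strands}), exactly as in the end-dot case. We show that $\rho_0\circ(\text{start dot})\circ\iota$, or equivalently the composite read through the source complex, equals the start dot on the nose, up to the global sign $-1$. The point is that the source of the start dot on the untwisted side is a one-term complex, namely the parallel resolution in homological degree $0$. Hence only the component of each map landing in or leaving that degree matters, and no homotopy can intervene. Concretely, we need:
\begin{enumerate}[i.]
\item Track $\iota$ on the parallel resolution. It lands in the resolution of the twisted tangle where the fork-twist crossings are resolved compatibly.
\item Apply the start dot (zip foam), which produces a thick edge inside the twisted region.
\item Apply $\rho_0=\mathrm{RII}^{-1}\circ\tw$. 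Only one component survives, and it is a small Soergel diagram: a start dot attached to a merge/split with one black-dot decoration.
\end{enumerate}
We then simplify this diagram using the rotational convention for the black dot in Definition~\ref{defn:fork twist}, the barbell/polynomial-forcing relation, and the unit/counit relations.

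We expect this local Soergel computation to be the main obstacle. Rotating the black dot by $2\pi/3$ introduces a minus sign, and after the unit relation collapses the digon, that minus sign is precisely the global $-1$. We must check carefully that the dot-sliding terms of the form $x_i-x_{i+1}$ either cancel or are killed by the end of the composition, leaving exactly $-(\text{start dot})$. Since the vertical maps are very strong deformation retracts (Proposition~\ref{prop:2strands vsdr}), equality after composing with $\iota$ implies commutativity of the original square up to homotopy.

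For general $k$, we use the decomposition in the proof of Proposition~\ref{prop:flip k strands}. The first and second steps, the Reidemeister~III slides and the Reidemeister~II moves away from strands $i,i+1$, commute with the start dot up to homotopy. This is the naturality of the slide maps from \cite[Proposition 6.3 and Proposition 6.13]{stroppel2024braiding}, since the start dot is a morphism between the identity and $B_i$ and slides are natural in such morphisms. The third step is the two-strand case handled above. Composing the three squares then gives the lemma, with the sign exactly as recorded in Proposition~\ref{prop:flip elementary gln foam}.
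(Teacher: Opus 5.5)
Your first paragraph is essentially the paper's whole proof. The square is diagram~\eqref{eqn:diagram_startdot}, which the proof of Proposition~\ref{prop:flip2strands} shows anti-commutes for the raw $\rho_0$. That $-1$ is built into the statement and gives the sign $(-1)^{\chi_2^-(W)}=-1$ in Proposition~\ref{prop:flip elementary gln foam}. To be precise about the bookkeeping: the factor $(-1)^{n_-(v)}$ in~\eqref{eqn:def of rho_gamma} \emph{undoes} the $-\rho_0$ that $\rho'$ uses at negative crossings. So $\rho_\Gamma$ involves the raw $\rho_0$, and that is why the square carries the $-1$.

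The verification you then sketch uses the wrong homotopy inverse.
\begin{itemize}
\item The map $\iota=\tw^{-1}\circ\mathrm{RII}$ of~\eqref{eqn:vsdr 2 strands} is the inverse of $\rho_0$, the vertical map on the \emph{thick-edge} side. It lands in the twisted complex containing the thick edge, but the twisted start dot starts from the twisted complex of two parallel strands. So $\rho_0\circ(\text{start dot})\circ\iota$ is not defined, and ``tracking $\iota$ on the parallel resolution'' is meaningless, since there is no fork to twist.
\item In the end-dot case $\iota$ is precomposed because the end dot's \emph{source} is the thick edge. For the start dot the source is the parallel web, so you must precompose with the inverse of the parallel-side vertical map. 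That is the Reidemeister~II map inserting the two crossings. You then check $\rho_0\circ(\text{twisted start dot})\circ\mathrm{RII}=-(\text{start dot})$ as maps between one-term complexes.
\item Deducing the square up to homotopy then only uses that this Reidemeister~II map is a homotopy equivalence. Proposition~\ref{prop:2strands vsdr} is not what is needed here.
\item Putting $\iota$ on the target side instead would compare maps into a complex with several terms, which reintroduces room for homotopies.
\end{itemize}
Finally, your $k$-strand paragraph is not needed for this lemma, which is a local two-strand statement. The passage to global webs is done separately in the proof of Theorem~\ref{thm:flip planar web}, via \cite[Proposition 6.13]{stroppel2024braiding}.
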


\begin{proof}
    This is essentially the same diagram as (\ref{eqn:diagram_startdot}), which we have proved to be anti-commutative in the proof of Proposition~\ref{prop:flip2strands}. The sign there is absorbed here.
\end{proof}

\subsection{Merge and split vertices}

\begin{lem}\label{lem:twist merge}
    The following diagram commutes up to homotopy.
    \begin{equation*}
        \halftwistmerge
    \end{equation*}
\end{lem}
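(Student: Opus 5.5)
The plan is to reduce Lemma~\ref{lem:twist merge} to a two-strand computation, in the same way Lemmas~\ref{lem:twist end} and~\ref{lem:twist start} reduce to the diagrams (\ref{eqn:diagram_enddot}) and (\ref{eqn:diagram_startdot}). The merge vertex is the digon creation foam $B_i\to B_iB_i$. The vertical maps are the twist maps $\rho_w$ of Definition~\ref{defn:n strand twist}. For $B_iB_i$ such a map is a composition of two atomic twists with the slide maps of Proposition~\ref{prop:atomicslide} between them.

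First, I would strip away the strands outside the $i$-th and $(i+1)$-th ones. The steps of Proposition~\ref{prop:flip k strands} that move crossings by Reidemeister III moves and delete the outer crossings do not touch the merge vertex, which is supported on strands $i,i+1$. On the associated graded pieces these steps are slide maps, and by the naturality of slide maps with respect to Soergel generators \cite[Proposition 6.13]{stroppel2024braiding} they commute with the merge on the nose or up to homotopy. This leaves the case $k=2$. There the vertical maps are $\rho_0$ on $B_1$ and the iterated twist on $B_1B_1$, namely $\rho_0$ applied to each factor with a slide $\slide_{B_1,\one_1}$ in between.

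Second, I would use the very strong deformation retract structure of Proposition~\ref{prop:2strands vsdr}. As in the proof of Proposition~\ref{prop:flip2strands}, it suffices to check that
\[\rho_{B_1B_1}\circ \mathrm{merge}\circ\iota_{B_1}=\mathrm{merge}^*\]
holds as an equality of maps. Here $\iota_{B_1}=\tw^{-1}\circ\mathrm{RII}$. I would write both sides out in Soergel calculus, keeping only resolutions in the relevant homological degree. The source $\lrb{B_1^*}$ is a one-term complex, so after composing with $\iota$ only finitely many terms survive, and there is no room for a homotopy on that side. I would then reduce the surviving diagrams using the Frobenius relations: associativity of merge, the barbell relation, and the definition (\ref{eqn:defn_black_dot}) of the black dot together with its rotated form. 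The goal is to obtain the merge vertex on the flipped web, with coefficient $+1$ up to the sign conventions fixed in (\ref{eqn:def of rho_gamma}). The sign must then be compared with $(-1)^{\chi_2^-}=-1$. The factor $(-1)^{n_-(v)}$ is already absorbed into $\rho_\Gamma$, so the expected outcome is commutativity after accounting for the $-1$ coming from the extra thick facet.

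The main obstacle is the explicit diagrammatic reduction in the second step. The composite contains the homotopy-like terms of $\tw$, with their black dots, and the slide map between the two twists. These generate several terms that must cancel pairwise, using the rotational antisymmetry of the black dot and dot-sliding across a merge. Tracking signs here is delicate, especially the sign of the middle slide relative to the $(-1)^{s(u,v)}$ differential convention. If the direct computation becomes unwieldy, I would fall back on a different argument. Both sides are degree-zero chain maps $\lrb{B_1}\to\lrb{B_1B_1}$ between complexes whose homotopy classes of maps are governed by $\mathrm{Hom}(B_1,B_1B_1)$, which is generated by the merge. So commutativity up to a scalar is automatic, and it only remains to determine that scalar by evaluating on a single generator, for example after closing up and capping with an end dot.
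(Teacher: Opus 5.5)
The overall strategy is the paper's kind of argument: a direct chain-level check in Soergel calculus, using that the flipped webs give one-term complexes. But the proposal is set up on the wrong morphism, and the computation itself is never carried out.

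\textbf{The direction of the merge.} Whatever name one attaches to the underlying foam, the merge vertex is the multiplication $m_1\colon B_1B_1\to B_1$ of the Frobenius object $c_1$, not a map $B_1\to B_1B_1$. The sign in Proposition~\ref{prop:flip elementary gln foam} forces this:
\begin{itemize}
\item For $B_1B_1\to B_1$, the $2$-labeled part of the foam is a disk while the source has two thick edges, so $\chi_2^-=1-2=-1$.
\item For $B_1\to B_1B_1$, one gets $\chi_2^-=1-1=0$.
\item Equivalently, additivity of $\chi_2^-$ applied to the unit relation $m_1\circ(\eta_1\otimes\mathrm{id})=\mathrm{id}$, with $\chi_2^-=1$ for the start dot, gives $-1$ for $m_1$.
\end{itemize}
So as written you are proving the split case, Lemma~\ref{lem:twist split}. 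Your sign bookkeeping is then inconsistent: you would be looking for a $-1$ that the $B_1\to B_1B_1$ computation does not produce. In the $\chi_2^-$ count, the $-1$ records two thick edges fusing into one, not an extra thick facet.

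Two smaller points. First, on two strands $\rho_{B_1B_1}$ is built from RII moves and two fork twists. The slide maps of Proposition~\ref{prop:atomicslide} involve three strands and do not enter. Second, your first step is superfluous. The lemma is the local two-strand statement, and the passage to $k$ strands happens in the proof of Theorem~\ref{thm:flip planar web} via \cite[Proposition 6.13]{stroppel2024braiding}.

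\textbf{Repairing the main line.} With the direction corrected, your $\iota$-trick needs a homotopy inverse of $\rho_{B_1B_1}$ on the source side, not $\iota_{B_1}$. The paper needs neither. It checks that the square commutes on the nose: the difference of the two composites collapses to a single Soergel diagram, which is zero. That reduction is the entire content of the proof, and your proposal only announces it.

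\textbf{Your fallback.} After correction, this becomes a genuinely different and complete argument. On two strands $B_1^*=B_1$. Via the homotopy equivalence $\rho_{B_1B_1}$, homotopy classes of maps $\lrb{\Delta^{-1}B_1B_1\Delta}\to\lrb{B_1}$ in the degree of $m_1$ are $\mathrm{Hom}(B_1B_1,B_1)$ in that degree. Since $B_1B_1\cong qB_1\oplus q^{-1}B_1$, and degree-zero endomorphisms of $B_1$ are $\Z\cdot\mathrm{id}$ with none in negative degree, this space is $\Z\cdot m_1$. Hence
\[
\rho_{B_1}\circ m_1\simeq c\, m_1^*\circ\rho_{B_1B_1}\quad\text{for some } c\in\Z .
\]
Do not try to fix $c$ by ``evaluating on a generator''. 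Instead, precompose with a start dot on one factor, and use $m_1\circ(\eta_1\otimes\mathrm{id})=\mathrm{id}$ together with Lemma~\ref{lem:twist start}, which carries the sign $-1$. This gives $c=-1$, as predicted. Your ``cap with an end dot'' is the counit version of the same argument; it settles the split case with $c=+1$.

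This route replaces the diagrammatic computation by a rank-one Hom argument plus the start-dot lemma. The paper's computation, by contrast, is self-contained.
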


\begin{proof}
    We prove that the following diagram commutes (with trivial homotopy).

    \begin{equation*}
        \halftwistmergeproof
    \end{equation*}

    In fact, we have
    \begin{gather*}
        \left(\halftwistmergea\right)-\left(\halftwistmergeb\right)\\=\left(\halftwistmergec\right)=0.
    \end{gather*}
\end{proof}

\begin{lem}\label{lem:twist split}
    The following diagram commutes up to homotopy.
    \begin{equation*}
        \halftwistsplit
    \end{equation*}
\end{lem}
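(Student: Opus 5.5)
The plan is to mirror the proof of Lemma~\ref{lem:twist merge}: replace ``commutes up to homotopy'' by an honest equality of chain maps, after composing with the explicit homotopy inverse. The merge proof worked because the relevant two-strand complexes are tiny. Concretely, I would precompose both paths around the square with the inclusion $\iota=\tw^{-1}\circ\mathrm{RII}$ from Proposition~\ref{prop:2strands vsdr}, or postcompose with the retraction $\pi=\rho_0$, whichever side lands in a one-term complex. Since $\rho_0$ is a very strong deformation retract, it then suffices to check that
\[
\rho_{\Gamma_1}\circ W\circ\iota \quad\text{and}\quad W^*\circ\rho_{\Gamma_0}\circ\iota
\]
agree on the nose. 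Here $W$ is the split (digon annihilation, read in the Soergel calculus as the split vertex) placed on the two distinguished strands. This is exactly the trick used for the end dot in Figure~\ref{fig:commuteenddot}: checking commutativity out of a one-term complex leaves no room for a homotopy, so only a single component of each composite survives.

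The key steps, in order, are as follows.

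\textbf{Step 1 (reduction to two strands).} I would reduce to $k=2$. By Definition~\ref{defn:n strand twist}, $\rho_w$ is a composition of slide maps (Proposition~\ref{prop:atomicslide}) followed by the two-strand fork twist piece. The slide maps commute with Soergel generators supported on the distinguished strands by the naturality of \cite[Proposition 6.13]{stroppel2024braiding}. So only the fork-twist and $\mathrm{RII}$ stage needs checking.

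\textbf{Step 2 (expanding the composites).} I would write out $\tw$, $\tw^{-1}$ and the Reidemeister~II maps of Proposition~\ref{prop:RIImove} in the relevant homological degree, and compose them with the split vertex on each side. Every term that factors through a resolution not present in the target degree is discarded.

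\textbf{Step 3 (simplifying).} I would simplify the remaining Soergel diagrams using the Frobenius relations (the split followed by a dot is the identity, counit), the barbell/polynomial-forcing relation, and the defining relation (\ref{eqn:defn_black_dot}) for the black dot together with its rotated version. I expect the difference of the two composites to collapse to a diagram containing a closed circle or a dot-annihilated merge, which vanishes, as in the merge case.

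\textbf{Sign check.} The sign should come out $+1$, consistent with $\chi^-_2=0$ for a split. This is plausible because the split, unlike the start dot, does not pass through the anticommuting square (\ref{eqn:diagram_startdot}).

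I expect Step 3 to be the main obstacle. The split lowers the number of thick-edge components in the opposite direction to the merge, so the twist map $\tw$ acts on a two-term piece. The cancellation then needs the black-dot relation applied on both sides, and its rotated version with the opposite sign, so the signs must be tracked carefully. If direct equality fails, the fallback is to produce an explicit homotopy built from the homotopy $H$ of Proposition~\ref{prop:2strands vsdr} composed with the split. The side conditions $\pi\circ H=0$ and $H\circ\iota=0$ should make its correction terms vanish after closing up.
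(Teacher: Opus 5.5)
Your proposal is correct and follows the paper's approach: the paper proves this lemma the same way as the merge case, by showing that the relevant square commutes strictly through a direct Soergel-calculus computation in which the difference of the two composites vanishes. Your precomposition with $\iota=\tw^{-1}\circ\mathrm{RII}$, which makes both composites maps between one-term complexes, is the same no-room-for-homotopy device the paper uses for the end dot. Your Step~1 is unnecessary, since the lemma is already a two-strand statement; the passage to $k$ strands via \cite[Proposition 6.13]{stroppel2024braiding} only happens later, in the proof of Theorem~\ref{thm:flip planar web}.
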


\begin{proof}
    This is similar to the proof of Lemma~\ref{lem:twist merge}.
\end{proof}

\subsection{Six-valent vertices}

To compute the half twist slide through a six-valent vertex, we first need to write down the map induced by sliding a half twist on $3$ strands through a thick edge. Recall that our notation for half twists on three strands is\[\Delta_3=\begin{tikzpicture}[anchorbase,scale=.25,rotate=90]
        \deltathree{0}{0}
        \webarrr{0}{3}
    \end{tikzpicture},\,\Delta_3^{-1}=\begin{tikzpicture}[anchorbase,scale=.25,rotate=90]
        \deltathreei{0}{0}
        \webarrr{0}{3}
    \end{tikzpicture},\,\Delta_3'=\begin{tikzpicture}[anchorbase,scale=.25,rotate=90]
        \deltathreea{0}{0}
        \webarrr{0}{3}
    \end{tikzpicture},\,\Delta_3'^{-1}=\begin{tikzpicture}[anchorbase,scale=.25,rotate=90]
        \deltathreeai{0}{0}
        \webarrr{0}{3}
    \end{tikzpicture}.\]

\begin{prop}\label{prop:slide3strandtwist}
    The map induced by sliding the $3$-strand half twist $\Delta_3$ through a red thick edge, given by a composition of the slide map and the twist map described in Proposition~\ref{prop:atomicslide} and Definition~\ref{defn:fork twist} respectively, is depicted in Figure~\ref{fig:slide_ht3r_ltr}. The inverse of this map is depicted in Figure~\ref{fig:slide_ht3r_rtl}. The map induced by sliding the $3$-strand half twist $\Delta_3'$ through a blue thick edge can be obtained by switching the red color and the blue color in Figure~\ref{fig:slide_ht3r_ltr}; its inverse is given by switching colors in Figure~\ref{fig:slide_ht3r_rtl}.
\end{prop}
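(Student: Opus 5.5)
We will realize the slide of $\Delta_3$ through a red thick edge as a composition of elementary moves, each of which is already controlled by Proposition~\ref{prop:atomicslide} or by the fork twist of Definition~\ref{defn:fork twist}, and then compose their chain maps explicitly in Soergel calculus. Write $\Delta_3=\sigma_2\sigma_1\sigma_2$ in the paper's convention. Sliding a thick edge $c_i$ across $\Delta_3$ is the isotopy $c_i\Delta_3\simeq\Delta_3 c_{3-i}$. For the red edge, we first pass it through the first crossing by an atomic slide of the form $\slide_{\one_1,B_1}$ or $\slide_{B_1,\one_1}$; this moves the thick edge past a crossing on a neighbouring pair of strands. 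We then pass it through the second crossing by a second atomic slide, after which the thick edge sits on the same pair of strands as the remaining crossing. Because $\Delta_3$ contains only one crossing on each pair of strands, there is no further crossing to slide past. Instead, we apply $\tw$ at this point, which lets the thick edge pass through a single crossing on its own pair of strands. The output is a thick edge of the other (blue) colour on the opposite side of $\Delta_3$.

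Concretely, the plan has three steps.
\begin{enumerate}[(i)]
\item Tensor each elementary map with identities on the untouched strand. Also tensor the cube of resolutions of the remaining crossings, using the monoidal structure of $K^b(\mathrm{SBim}_3)$ from \cite{stroppel2024braiding}. This gives three chain maps between complexes indexed by $\{0,1\}^3$.
\item Compose these three chain maps, expanding each component in Soergel calculus. Simplify using the relations of \cite[Definition 4.4]{stroppel2024braiding}: the Frobenius relations, the barbell/dot-sliding relations, and the six-valent and four-valent relations. The aim is to reach the normal form recorded in Figure~\ref{fig:slide_ht3r_ltr}.
\item Obtain the inverse by composing the inverses in reverse order. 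These are $\tw^{-1}$ and the maps $\slide^{-1}$ of Proposition~\ref{prop:atomicslide}. Simplify in the same way to reach Figure~\ref{fig:slide_ht3r_rtl}.
\end{enumerate}
Since each factor is a homotopy equivalence with the stated homotopy inverse, the composites are mutually inverse up to homotopy. No separate check is required, though one can verify it directly as a sanity check.

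For the blue edge and $\Delta_3'$, we use symmetry rather than redoing the computation. Reflecting the whole picture top-to-bottom exchanges strands $1\leftrightarrow 3$, swaps the colours red and blue, and turns $\Delta_3$ into $\Delta_3'$. Both the atomic slides and the fork twist are defined by diagrams whose colour-swapped versions are the corresponding maps. Also, the defining relation \eqref{eqn:defn_black_dot} for the black dot is rotationally invariant. So the colour-swapped diagrams are again chain maps, and they give the claimed slide maps and inverses.

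The main obstacle will be step (ii). The composition produces many terms involving black dots (the balanced difference of polynomials), and they must be reorganized into a clean form. In particular, some dot terms cancel only after using polynomial forcing across a thick edge, and keeping track of the rotation sign in the rotated black dot requires care. The first thing to try is to verify componentwise that the composite is an honest chain map in each cubical degree, and to discard components that are forced to vanish for degree reasons. The quantum degree forbids many terms, especially between the extreme vertices of the cube, so only a few components need genuine diagrammatic simplification.
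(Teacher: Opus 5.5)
Your overall plan has the right shape: compose explicit elementary chain maps, simplify in Soergel calculus, obtain the inverse from $\slide^{-1}$ and $\tw^{-1}$ in reverse order, and get the $\Delta_3'$/blue case by colour swap. The problem is that your decomposition of the move into elementary pieces is wrong, and that decomposition is precisely what the proposition asserts.

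The atomic slide of Proposition~\ref{prop:atomicslide} does not move a thick edge past a single crossing on a neighbouring pair of strands. The maps $\slide_{\one_1,B_1}$ and $\slide_{B_1,\one_1}$ move $B_1$ (a thick edge on two strands) across an entire third strand. That is, they pass through the \emph{two} crossings in which the third strand crosses both legs of the thick edge. A single crossing can be moved past a thick edge only when it lies on the thick edge's own pair of strands, and that move is the fork twist $\tw$. A crossing sharing only one strand with the thick edge cannot be moved past it on its own. For example, in $c_1\cdot\sigma_2$ the legs of the thick edge end at the non-adjacent positions $1$ and $3$. In any tangle of the form (single crossing)$\cdot c_j$ they end at adjacent positions, so no isotopy rel boundary relates the two. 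Hence the first two maps in your step (i) do not exist. If you replace them by two genuine atomic slides, you pass through $2+2+1=5$ crossings, whereas $\Delta_3$ has three.

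The correct decomposition uses one slide and one twist, exactly as the statement says.
\begin{enumerate}
\item Choose the word for the positive half twist, $\sigma_2\sigma_1\sigma_2$ or $\sigma_1\sigma_2\sigma_1$ (this is why both $\Delta_3$ and $\Delta_3'$ appear). Choose it so that two adjacent crossings form the crossing of the red pair of strands, as a bundle, with the third strand.
\item A single atomic slide carries the red edge through those two crossings onto the other pair of strands, where it becomes blue.
\item The remaining crossing is then between the two strands of that pair, so one fork twist moves the thick edge past it.
\end{enumerate}
So step (i) should produce two chain maps, not three. Figure~\ref{fig:slide_ht3r_ltr} is their simplified composite, and Figure~\ref{fig:slide_ht3r_rtl} is the composite of $\tw^{-1}$ and $\slide^{-1}$ in the reverse order. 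With this correction, your steps (ii) and (iii) are what the paper does: it simply computes this composite.

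A smaller point concerns your symmetry argument. A top-to-bottom reflection of the plane reverses crossing signs, sending $\sigma_i$ to $\sigma_{3-i}^{-1}$. So it sends $\Delta_3$ to the inverse of $\Delta_3'$, not to $\Delta_3'$. Sending $\Delta_3$ to $\Delta_3'$ requires the rotation about the horizontal axis, which is the flip itself and should not be invoked as a symmetry of induced maps. What you actually need, and what suffices, is the formal colour swap on Soergel diagrams. It exchanges the complexes of $\Delta_3$ and $\Delta_3'$ and, as you say, the slide and twist formulas.
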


\begin{figure}[hbtp]
    \centering
    \begin{gather*}
        \left(
        \slidehtrltr\right)\\
        =\left(\slidehttltra,\,\slidehttltrb,\,0,\,0\right).\end{gather*}
    \caption{Slide the half twist through a red edge, left to right.}
    \label{fig:slide_ht3r_ltr}
\end{figure}

\begin{figure}[hbtp]
    \centering
    \begin{gather*}
        \left(
        \slidehtrrtl\right)\\
        =\left(\slidehttltrai,\,\slidehttltrbi,\,0,\,0\right).\end{gather*}
    \caption{The inverse of the map in Figure~\ref{fig:slide_ht3r_ltr}.}
    \label{fig:slide_ht3r_rtl}
\end{figure}

\begin{lem}\label{lem:six valent}
    The following diagram commutes up to homotopy.
    \[
        \halftwiststar
    \]
\end{lem}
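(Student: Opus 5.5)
The plan is to follow the template of Lemmas~\ref{lem:twist merge} and~\ref{lem:twist split}: check commutativity of the square on the nose, rather than only up to homotopy, by pushing the six-valent vertex through the explicit $3$-strand half-twist slide maps. The source web is the $3$-strand web with thick edges in colors red, blue, red. The target web has thick edges blue, red, blue. The six-valent vertex is the MOY foam of Figure~\ref{fig:starfoam}.

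The vertical maps $\rho_{\Gamma_0}$ and $\rho_{\Gamma_1}$ are compositions of three single-edge slide maps. Each of these is given by Proposition~\ref{prop:slide3strandtwist}. For a red edge we use Figure~\ref{fig:slide_ht3r_ltr}; for a blue edge we use its color-swapped version for $\Delta_3'$. Conjugation by $\Delta_3$ exchanges the two colors, and the half twist alternates between $\Delta_3$ and $\Delta_3'$ as it passes each thick edge.

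First, I will write both composites as maps between the relevant resolutions of the twisted complexes. One composite is $\rho_{\Gamma_1}\circ W$: apply the six-valent vertex inside $\Delta^{-1}\circ\Gamma_0\circ\Delta$, then slide. The other is $W^*\circ\rho_{\Gamma_0}$. Only the components in homological degree $0$ need to be compared. Each slide map in Figure~\ref{fig:slide_ht3r_ltr} has just two nonzero components, and both land in the oriented-type resolutions. So the triple composite has a small, explicit list of nonzero components. I expect each to be a Soergel diagram built from the six-valent vertex decorated by dots, merges and splits.

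Second, I will simplify both sides using the Soergel relations of \cite[Definition 4.4]{stroppel2024braiding}. The relevant ones are:
\begin{itemize}
\item the dot-through-six-valent relation (the ``two-color associativity'' move);
\item the needle relation;
\item the barbell/polynomial-forcing relations, which move the black dots of~(\ref{eqn:defn_black_dot}) across strands.
\end{itemize}
After simplification, the two sides should agree up to the global sign $(-1)^{\chi_2^-(W)}=(-1)^{-2}=+1$. Any discrepancy should appear as a diagram of the form $[d,h]$ for an explicit homotopy $h$, supported on the resolutions adjacent to the degree-$0$ piece. In that case I will exhibit $h$ directly.

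The main obstacle is bookkeeping in the middle slide. Sliding past the middle blue edge changes $\Delta_3$ to $\Delta_3'$, so the intermediate complex carries two colors of dots simultaneously. Cross terms of the form ``dot times six-valent vertex'' can then fail to cancel unless one uses the polynomial forcing relation $\partial_i$ correctly, including its sign conventions. I would first try to organize the computation by factoring the six-valent vertex as a composite of two trivalent vertices plus a four-valent crossing pattern. Lemmas~\ref{lem:twist merge},~\ref{lem:twist split} and~\ref{lem:four valent} would handle those pieces. If that factorization is not available, because the six-valent vertex is a generator, I would instead resolve the red–blue–red Bott–Samelson object into its indecomposable summands and check commutativity on each summand.
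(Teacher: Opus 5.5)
Your proposal has a genuine gap: none of the three mechanisms you describe would actually prove the lemma. The factorization you would try first does not exist. On three strands the two colours are adjacent, so there is no four-valent vertex at all. Moreover, any diagram built only from dots and trivalent vertices from red--blue--red to blue--red--blue has degree at least $2$, since planarity forces at least four one-coloured components. Such a diagram can therefore never produce the degree-zero six-valent vertex, and Lemmas~\ref{lem:twist merge}, \ref{lem:twist split} and~\ref{lem:four valent} do not apply.

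The fallback via indecomposable summands cannot finish the job either. Degree-zero morphisms between these two Bott--Samelson objects are exactly the integer multiples of the six-valent vertex. So such an argument at best gives $\rho_{\Gamma_1}\circ W\circ\rho_{\Gamma_0}^{-1}=c\,W^*$ for some integer $c$, where $\rho_{\Gamma_0}^{-1}$ is a homotopy inverse. The lemma is exactly the statement $c=1$, and your plan never computes $c$. One could try to fix $c$ by conjugating a relation containing a single six-valent vertex and otherwise only dots and trivalent vertices (for instance a six-valent vertex with one leg capped by a dot), then invoking the dot and trivalent cases. That could give a genuinely different and shorter route, but this argument and its sign bookkeeping are not in your proposal.

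The direct computation also misses both devices that make the paper's proof work. First, the direction of comparison. You compare $\rho_{\Gamma_1}\circ W$ with $W^*\circ\rho_{\Gamma_0}$ on the six-crossing complex $\lrb{\Delta^{-1}\circ\Gamma_0\circ\Delta}$. There they need only agree modulo a term $h\circ d$, with $h$ coming from the homotopy $\rho_{\Gamma_0}^{-1}\circ\rho_{\Gamma_0}\simeq\idd$. You should not expect equality on the nose there, and you give no way to find $h$.

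The paper goes around the square the other way. It computes $\rho_{\Gamma_1}\circ W\circ\rho_{\Gamma_0}^{-1}$ from $\lrb{\Gamma_0^*}$ to $\lrb{\Gamma_1^*}$, a map between one-term complexes, where homotopic means equal. This is the movie of Figure~\ref{fig:movieconjstar}:
\begin{enumerate}
\item a Reidemeister II move;
\item slides through thick edges, interleaved with Reidemeister III moves switching $\Delta_3$ and $\Delta_3'$;
\item the six-valent vertex;
\item the reverse sequence, ending with a Reidemeister II move.
\end{enumerate}
A slide does not itself convert one presentation of the half twist into the other. The Reidemeister III maps, and the outer Reidemeister II maps, are nontrivial parts of the composite that your description omits. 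The outer Reidemeister II moves amount to a partial trace over palindromic resolutions.

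Second, the answer is not visible in any single piece. All intermediate maps preserve the auxiliary grading $h_3$, and $f_2=f_3=0$. But $f_0$ is \emph{twice} the six-valent vertex (Lemma~\ref{lem:h3=0}, a $64$-term expansion of the black dots organized by a $C_6$ symmetry). Meanwhile $f_1$ is \emph{minus} the six-valent vertex (Lemma~\ref{lem:h3=1}, a trace of products of $3\times 3$ matrices). This is because the $h_3=1$ piece of each slide map is a $3\times 3$ matrix, not a single term. The lemma holds because $2-1=1$. Your plan of simplifying a short list of components until the two sides match does not anticipate this cancellation between different $h_3$-gradings.
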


\begin{proof}
    This is the most involved computation in this paper. The plan is again to check that the composition map from the top left term to the top right term is the same as the map $\ssixv$, without worrying about the possible homotopy. The movie realizing this composition is depicted in Figure~\ref{fig:movieconjstar}.

    \begin{figure}[hbtp]
        \centering
        \[\movestar\]
        \caption{Conjugating a six-valent vertex by the half twist.}
        \label{fig:movieconjstar}
    \end{figure}

    There are four types of moves in Figure~\ref{fig:movieconjstar}:\begin{itemize}
        \item Reidemeister II moves (Frame 1 to 2, and 15 to 16).

        \item Reidemeister III moves between $\Delta_3$ and $\Delta_3'$ (Frame 3 to 4, 5 to 6, 8 to 9, 10 to 11, 12 to 13, and 14 to 15).

        \item The six-valent vertex (Frame 7 to 8).

        \item The moves that slide the half twists through thick edges (Frame 2 to 3, 4 to 5, 6 to 7, 9 to 10, 11 to 12, 13 to 14).
    \end{itemize}

    The induced maps of Reidemeister II and III moves have been described in Propositions~\ref{prop:RIImove} and~\ref{prop:RIIImove}. Moves of the last type, sliding half twists, are described in Proposition~\ref{prop:slide3strandtwist}.

    From Frame 2 to Frame 15, three crossings in $\Delta_3^{-1}$ do not participate in the movie. By Proposition~\ref{prop:RIImove}, a resolution of six crossings in the second frame must be \textit{palindromic} to support a nonzero image from Frame 1. Moreover, the effect of the movie from the first frame to the second and from the second-to-last frame to the last one is to take a ``partial trace'' on all but the two rightmost strands in Soergel calculus diagrams, up to sign.

    For the chain complexes associated to Frame 2 to Frame 15, we define an auxiliary grading $h_3\in\{0,1,2,3\}$ as the number of $1$-resolutions when resolving three crossings in $\Delta_3$ or $\Delta_3'$. It is clear from Propositions~\ref{prop:RIIImove} and~\ref{prop:slide3strandtwist} that the map induced by the movie from Frame 2 to Frame 15 preserves the $h_3$ grading. Denote the map induced by the movie in Figure~\ref{fig:movieconjstar} in $h_3$ grading $i$ by $f_i$ ($i=0,\,1,\,2,\,3$). It is easy to see from Proposition~\ref{prop:slide3strandtwist} that $f_2$ and $f_3$ vanish. It remains to compute $f_0$ and $f_1$; this is done in Lemma~\ref{lem:h3=0} for $f_0$ and Lemma~\ref{lem:h3=1} for $f_1$. In conclusion, the map induced by the movie in Figure~\ref{fig:movieconjstar} is given by \[f_0+f_1=2\ssixv-\ssixv=\ssixv.\]\end{proof}

\begin{lem}\label{lem:h3=0}
    The $h_3=0$ piece of the map induced by the movie in Figure~\ref{fig:movieconjstar} is given by $f_0=2\ssixv$.
\end{lem}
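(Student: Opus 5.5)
I will compute $f_0$ by restricting the movie of Figure~\ref{fig:movieconjstar} to the $h_3=0$ part and following one generating Soergel diagram through the frames. First I fix the input. By Proposition~\ref{prop:RIImove}, the only resolutions of the six crossings in Frame 2 that receive a nonzero image from Frame 1 are the palindromic ones. In $h_3=0$ all three crossings of $\Delta_3$ (or $\Delta_3'$) take their $0$-resolution, so the three crossings of $\Delta_3^{-1}$ are resolved in the matching way. Consequently the component of Frame 1 $\to$ Frame 2 in this grading is the insertion of identity strands, up to a global sign. I will record this sign once, because the same sign reappears in the partial trace at Frame 15 $\to$ 16, and the two occurrences should cancel.

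Next I follow the thick-edge slides and the Reidemeister III moves. By Proposition~\ref{prop:slide3strandtwist}, the slide of a half twist through a thick edge has exactly two nonzero components, the two entries of Figure~\ref{fig:slide_ht3r_ltr}. Restricted to the $h_3=0$ summand these should reduce to one or two diagrams, namely a fork-twist-type component involving the black dot of (\ref{eqn:defn_black_dot}). The Reidemeister III maps of Proposition~\ref{prop:RIIImove} in the all-$0$ resolution are essentially six-valent vertex or identity morphisms between $\Delta_3$ and $\Delta_3'$. Composing six slides, six RIII maps and the six-valent vertex at Frame 7 $\to$ 8 gives a sum of Soergel diagrams. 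Each black dot expands via (\ref{eqn:defn_black_dot}) into a difference of a barbell-type dot term and a polynomial term.

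I then simplify this sum using the Soergel relations from \cite[Definition 4.4]{stroppel2024braiding}: needle relations, the Zamolodchikov/associativity of six-valent vertices, dot forcing to move polynomials across strands, and Frobenius (un)unit relations. After that I take the partial trace from Frame 15 to Frame 16. The expected outcome is that most terms die, either because a closed dotless circle evaluates to zero or because a needle vanishes. Two terms should survive, each equal to $\ssixv$: one comes from the red slide and one from the blue slide, by the red/blue symmetry in Proposition~\ref{prop:slide3strandtwist}. This gives $f_0=2\ssixv$.

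The main obstacle is bookkeeping. I have to track the signs from the differentials' $(-1)^{s(u,v)}$ conventions, the sign in the RII partial trace, and the polynomial terms of the black dots that appear under dot forcing. My first attempt will be to use the symmetry between the two colors to pair terms and show that the polynomial contributions cancel in pairs, so that only the two copies of the six-valent vertex remain.
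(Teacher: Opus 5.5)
Your overall route (concatenate the single-term $h_3=0$ components from Frame 2 to Frame 15, expand the black dots, close up) matches the paper's. However, the two places where the actual content of the lemma lies are either missing or wrong in your plan.

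\textbf{The sign.} The Reidemeister II signs do not cancel. In the $h_3=0$ component the palindromic resolution is all thick for both $\Delta_3^{-1}$ and $\Delta_3$. So the crossing-creating map (Frame 1 to 2) inserts cups, not identity strands, and carries no sign. The crossing-eliminating map (Frame 15 to 16) caps them off with a factor $-1$ for each of the three caps, so $f_0$ is \emph{minus} the concatenated diagram. The paper compensates this with relation (\ref{eqn:blackdot_rotate}): a black dot changes sign when rotated. This relation is applied to three black dots while isotoping the diagram into the symmetric form $\mathrm{SR}$ of Figure~\ref{fig:hugestarsymmetric}. Your plan contains neither ingredient, so its sign bookkeeping is incorrect as stated.

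\textbf{The coefficient $2$.} More seriously, the mechanism you predict for the coefficient is not what happens, and you give no argument for it. By (\ref{eqn:defn_black_dot}), a black dot is (an end dot beneath a cup) minus (a start-dot strand beside an identity strand). Neither summand is a polynomial term, so there are no ``polynomial contributions'' to cancel in pairs.

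Expanding all six black dots gives $2^6=64$ diagrams, and \emph{none} of them vanishes. Each equals $6$, $3$, or $2$ copies of the six-valent vertex, with sign $(-1)^{\#\mathrm{a}}$. For example, the all-a term contributes $+6$, and the six terms of type aaaaab contribute $-18$. The value $2$ emerges only from the signed total in Table~\ref{tab:computing huge star}.

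So there are no two ``surviving'' terms attributable to the red and blue slides. The red/blue symmetry of Proposition~\ref{prop:slide3strandtwist} alone cannot produce the coefficient.

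What makes the computation feasible is the symmetric position $\mathrm{SR}$. It carries a $C_6=C_3\times C_2$ symmetry (rotation by $120^\circ$ together with colour swap), which reduces the $64$ terms to $14$ orbit types. Each type is then evaluated to a multiple of the six-valent vertex using the Soergel relations. Without this expansion, or some other argument that actually evaluates $\mathrm{SR}$, your proposal asserts $f_0=2\ssixv$ rather than proving it.
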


\begin{proof}

    The map $f_0$ is a composition of $15$ maps. From Frame 2 to Frame 15, each map between two frames is a single-term map (before expanding the black dot notation). Direct concatenation gives\[f_0=-\hugestarh,\]where the sign comes from taking the Reidemeister II move (that eliminates crossings) three times, and the diagram should be understood as being rotated by $90^\circ$ clockwise.

    Before starting to simplify this huge diagram, we first note that \begin{equation}\label{eqn:blackdot_rotate}
        \begin{tikzpicture}[anchorbase,scale=.2]
            \gonglb{0}{0}
        \end{tikzpicture}=-\begin{tikzpicture}[anchorbase,scale=.2,rotate=90]
            \gongrb{0}{0}
        \end{tikzpicture}.
    \end{equation}
    Using this, we can turn this diagram into a symmetric form, as depicted in Figure~\ref{fig:hugestarsymmetric}. The sign therein comes from applying the relation (\ref{eqn:blackdot_rotate}) three times (to the top three black dots).
    \begin{figure}[hbtp]
        \centering
        \begin{gather*}
            -\hugestarv=\hugestarsymmetric\eqqcolon \mathrm{SR}.
        \end{gather*}
        \caption{Isotope the $h_3=0$ piece of the map to a symmetric position.}
        \label{fig:hugestarsymmetric}
    \end{figure}

    It remains to check $\mathrm{SR}=2\ssixv$. Recall from (\ref{eqn:defn_black_dot}) that a black dot is a shorthand notation for two diagrams. So the total expansion would contain $2^6=64$ terms. Fortunately, the diagram has a $C_6$ symmetry given by the $C_3$ symmetry from rotating $\mathrm{SR}$ by $120^\circ$ and the $C_2$ symmetry from swapping colors. Modulo this symmetry, there are $14$ cases to check.

    We do not present the details of this case-by-case computation, as it is routine and unenlightening. We only record the result of this computation. We call the negative summand in (\ref{eqn:defn_black_dot}) the \textit{a-resolution}, and the positive summand in (\ref{eqn:defn_black_dot}) the \textit{b-resolution}. It turns out that each resolution is equal to a number of copies of the six-valent vertex $\ssixv$. Table~\ref{tab:computing huge star} provides the results: the total number of $\ssixv$ is $2$, as desired.

    \begin{table}[htbp]
        \centering

        \begin{tabular}{cccc}
            Resolution               & number of $\ssixv$ & number of resolutions & sign \\ \hline
            aaaaaa                   & $6$                & $1$                   & $+$    \\
            aaaaab                   & $3$                & $6$                   & $-$    \\
            aaaabb                   & $2$                & $6$                   & $+$    \\
            aaabab                   & $3$                & $6$                   & $+$    \\
            aabaab                   & $2$                & $3$                   & $+$    \\
            aaabbb                   & $2$                & $6$                   & $-$    \\
            aababb                   & $2$                & $6$                   & $-$    \\
            abaabb                   & $2$                & $6$                   & $-$    \\
            ababab                   & $3$                & $2$                   & $-$    \\
            aabbbb                   & $2$                & $6$                   & $+$    \\
            ababbb                   & $2$                & $6$                   & $+$    \\
            abbabb                   & $2$                & $3$                   & $+$    \\
            abbbbb                   & $2$                & $6$                   & $-$    \\
            bbbbbb                   & $2$                & $1$                   & $+$    \\\hline
            Total number of $\ssixv$ &                    &                       & $2$
            \\\hline
        \end{tabular}

        \caption{Computation result for summands of the diagram $\mathrm{SR}$.}
        \label{tab:computing huge star}
    \end{table}
\end{proof}

\begin{lem}\label{lem:h3=1}
    The $h_3=1$ piece of the map induced by the movie in Figure~\ref{fig:movieconjstar} is given by $f_1=-\ssixv$.
\end{lem}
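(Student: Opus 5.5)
Following the proof of Lemma~\ref{lem:h3=0}, I will trace the movie in Figure~\ref{fig:movieconjstar} and keep only the components landing in $h_3$ grading $1$. The outer Reidemeister II frames (1 to 2 and 15 to 16) act exactly as before. By Proposition~\ref{prop:RIImove} they force palindromic resolutions of the six crossings in Frame 2 and amount to a partial trace, contributing the same overall sign (three Reidemeister II eliminations) as in the $h_3=0$ case. Between Frames 2 and 15, each Reidemeister III move between $\Delta_3$ and $\Delta_3'$ is given in $h_3=1$ by Proposition~\ref{prop:RIIImove}. Each half twist slide is given by Figures~\ref{fig:slide_ht3r_ltr} and~\ref{fig:slide_ht3r_rtl}, or their color-swapped versions; these have exactly two nonzero components, namely $h_3=0$ and $h_3=1$. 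The six-valent vertex in Frame 7 to 8 is applied on the strands not involved in the twist.

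The first real step is bookkeeping. In $h_3=1$ there are three resolutions of $\Delta_3$ (which of the three crossings is thick), and the Reidemeister III maps mix them through matrices whose entries are dots, start/end dots, or identities. So $f_1$ is a sum over paths through these $3\times 3$ matrices across Frames 2 to 15, not a single concatenation. I would first use the triangular structure of the RIII maps in Proposition~\ref{prop:RIIImove} to discard paths that vanish, for example those that produce a barbell on a thick edge with no compensating dot, or an end dot followed by a start dot on the same color, which can be reduced using the relations of \cite{stroppel2024braiding}. I expect only a few surviving paths, perhaps related to one another by the $C_3$ rotational symmetry.

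Next, for each surviving path, I would concatenate the Soergel diagrams and then apply the partial trace. As in Lemma~\ref{lem:h3=0}, I would use \eqref{eqn:blackdot_rotate} to put each diagram in a symmetric form. I would then expand the black dots via \eqref{eqn:defn_black_dot} and reduce each term to a multiple of $\ssixv$, using the Frobenius relations, the six-valent/merge relations, and the dot-sliding (polynomial forcing) relations. Summing the contributions with their signs should give $-\ssixv$.

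The main obstacle is the sign and coefficient bookkeeping in the path sum. The signs $(-1)^{s(u,v)}$ in the RIII and slide maps, and the sign convention for the black dot under rotation, must combine to give exactly $-1$. To check this, I would verify the answer independently by closing up and evaluating against a known pairing, so that it is consistent with $f_0+f_1=\ssixv$.
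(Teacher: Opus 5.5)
Your framework is the right one: restrict to $h_3=1$, where the maps from Frame 2 to Frame 15 are $3\times 3$ matrices, and let the outer Reidemeister II frames implement a partial trace. However, the one concrete sign claim you make is wrong, and the sign is the whole content of this lemma.

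The outer frames do \emph{not} contribute the same sign as in Lemma~\ref{lem:h3=0}. There, the factor $-1$ comes from the thick--thick component of each eliminating Reidemeister II map, one factor per thick edge of the resolved $\Delta_3$ that gets capped off. For $h_3=1$, one crossing of $\Delta_3$ is resolved to the parallel resolution, and by palindromicity so is its partner in $\Delta_3^{-1}$. That pair passes through the identity component of the Reidemeister II maps. Only two caps occur, so the outer frames contribute $(-1)^2=+1$. With your sign, a correct evaluation of the inner trace would give $f_1=+$(six-valent vertex), and Lemma~\ref{lem:six valent} would fail: $f_0+f_1$ would be three copies of the six-valent vertex rather than one.

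Second, the proposal does not actually contain the computation. The route you sketch is the $h_3=0$ strategy: symmetrize using \eqref{eqn:blackdot_rotate}, expand all black dots, and reduce with Frobenius, six-valent and polynomial-forcing relations. That is not what makes $h_3=1$ tractable. The paper proceeds as follows.
\begin{enumerate}
\item It first composes each Reidemeister III map with the adjacent half-twist slide.
\item The Frame 2--15 map then becomes three such composites, the six-valent vertex tensored with identities on the two thick edges of the resolved twist, and three more composites.
\item The trace has just three diagonal entries, evaluating to $0$, $-A+B$, and $C-D$.
\item Every term except $D$ contains a closed loop bounding a disk and hence vanishes, while $D$ is the six-valent vertex itself.
\end{enumerate}
This gives $f_1=-$(six-valent vertex). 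That vanishing mechanism is the missing idea; a generic path-sum reduction does not by itself pin down the coefficient.

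Finally, your proposed cross-check against $f_0+f_1=$ (six-valent vertex) is circular. That identity is the conclusion of Lemma~\ref{lem:six valent}, which is deduced from this lemma together with Lemma~\ref{lem:h3=0}. It cannot be used to fix the sign here.
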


\begin{proof}
    As in the previous lemma, the map $f_1$ is a composition of $15$ maps, but this time each map is a $3\times 3$ matrix from Frame 2 to Frame 15 (except from Frame 7 to Frame 8, which is a single six-valent vertex). Composing them still gives a $3\times 3$ matrix, and the effect of pre- and post-composing the Reidemeister II moves is to take the trace of this matrix; there is no sign change, since two caps occur this time.

    Our strategy for this computation is to first compute the composition of a Reidemeister III move and a half twist slide move; the total map from Frame 2 to Frame 15 is a composition of $3$ compositions of this type, a six-valent vertex, and another $3$ compositions. Computation gives \[f_1=\operatorname{tr}\left(\foneleft\circ\begin{tikzpicture}[anchorbase,scale=.2]
                \sixvbrb{0}{-1}
                \Rp{3}{0}
                \Rp{4}{0}
            \end{tikzpicture}\circ\foneright\right).\]Here $\operatorname{tr}$ refers to taking the trace of this matrix and closing the two leftmost strands up, and $\bullet$ indicates terms that are irrelevant to the trace.

    The trace is the sum of three terms on the diagonal: we have \[f_1=0+\left(-\fonetraa+\fonetrab\right)+\left(\fonetrba-\fonetrbb\right)=-\ssixv\ .\]Here all terms but the last one vanish because each of them contains a closed loop bounding a disk, which therefore evaluates to zero, and the last term is equal to $\ssixv$. This concludes the proof.
\end{proof}

\begin{lem}[The second six-valent vertices]\label{lem:six valenta}
    The following diagram commutes up to homotopy.
    \begin{equation*}
        \halftwiststara
    \end{equation*}
\end{lem}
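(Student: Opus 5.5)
The plan is to reduce Lemma~\ref{lem:six valenta} to Lemma~\ref{lem:six valent} by symmetry rather than redo the $64$-term computation. The second six-valent vertex has the colors in the opposite order from the first: blue--red--blue versus red--blue--red. We will use the color-swapping symmetry already recorded in Proposition~\ref{prop:slide3strandtwist}. There, the slide of $\Delta_3'$ through a blue edge, and its inverse, are obtained from those for $\Delta_3$ through a red edge by interchanging colors. The maps induced by the Reidemeister II and III moves between $\Delta_3$ and $\Delta_3'$ (Propositions~\ref{prop:RIImove} and~\ref{prop:RIIImove}) are likewise exchanged under this swap, up to the reflection of the relevant diagrams.

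Concretely, we first write down the movie analogous to Figure~\ref{fig:movieconjstar}. It conjugates the second six-valent vertex by the half twist and alternates between $\Delta_3$ and $\Delta_3'$ in the opposite order. We check that every frame and every elementary move is the color-swap of the corresponding frame and move in Figure~\ref{fig:movieconjstar}. As before, we introduce the auxiliary grading $h_3$ and decompose the induced map as $f_0+f_1+f_2+f_3$. The pieces $f_2$ and $f_3$ vanish for the same reason as before: the slide maps in Proposition~\ref{prop:slide3strandtwist} have only two nonzero components.

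The pieces $f_0$ and $f_1$ are the color-swapped versions of the diagrams in Lemmas~\ref{lem:h3=0} and~\ref{lem:h3=1}. The relations in Soergel calculus are invariant under swapping colors, up to the sign conventions for the black dot in \eqref{eqn:defn_black_dot}. The symmetric form $\mathrm{SR}$ in Figure~\ref{fig:hugestarsymmetric} already carries the $C_2$ color-swap symmetry, so the count in Table~\ref{tab:computing huge star} carries over verbatim and gives $f_0=2$ copies of the second six-valent vertex. The trace computation gives $f_1=-1$ copy. Summing yields the desired commutativity, with the sign prescribed by $\chi_2^-(W)=-2$, as in Proposition~\ref{prop:flip elementary gln foam}.

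The main obstacle is sign bookkeeping under the swap. The black dot is not symmetric: rotating it produces the sign in \eqref{eqn:blackdot_rotate}. Also, the number of Reidemeister II cap eliminations contributing a sign, namely three in $f_0$ and two in $f_1$, must be rechecked for the reflected movie. If the color-swap argument turns out to introduce an unexpected global sign, the fallback is to use \eqref{eqn:blackdot_rotate} directly to rotate the diagram for $f_0$ into the form $\mathrm{SR}$ and recount. For $f_1$, we recompute the three diagonal trace terms, of which again two should vanish because they contain a closed loop bounding a disk.
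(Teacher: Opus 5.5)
Your idea of reducing to Lemma~\ref{lem:six valent} by the color swap is the right one, and it is also the paper's idea. However, the step ``every frame and every elementary move is the color-swap of the corresponding frame and move in Figure~\ref{fig:movieconjstar}'' fails at the two ends of the movie.

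The color swap $B_1\leftrightarrow B_2$ (the Dynkin involution, $\sigma_1\leftrightarrow\sigma_2$) does not fix the conjugating half twist: it sends the braid diagram $\Delta_3$ to $\Delta_3'$. This is exactly why Proposition~\ref{prop:slide3strandtwist} pairs ``$\Delta_3$ through red'' with ``$\Delta_3'$ through blue''. So the color swap of Figure~\ref{fig:movieconjstar}:
\begin{itemize}
\item begins and ends with Reidemeister II moves that create and annihilate $\Delta_3'^{-1}\Delta_3'$;
\item carries an idle $\Delta_3'^{-1}$ throughout;
\item therefore proves commutativity of the square for the second six-valent vertex whose vertical maps are built from conjugation by $\Delta_3'$.
\end{itemize}
The lemma instead concerns the maps $\rho_{\Gamma_0},\rho_{\Gamma_1}$, which are built from the fixed positive half twist $\Delta=\Delta_3$ on $\lrb{\Delta_3^{-1}\circ\Gamma_i\circ\Delta_3}$.

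Your remark that the new movie alternates between $\Delta_3$ and $\Delta_3'$ ``in the opposite order'' shows you saw the swap inside the movie. You never reconcile it with the first and last frames, and this leaves two cases. If your movie uses $\Delta_3$ at the ends, it is not the color swap: it has extra Reidemeister III frames, and the pieces $f_0,f_1$ (Lemmas~\ref{lem:h3=0} and~\ref{lem:h3=1}) cannot be quoted ``verbatim''. If it uses $\Delta_3'$ at the ends, it proves a different square from the one in the statement.

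The missing ingredient is a comparison square, and adding it is exactly the paper's proof.
\begin{itemize}
\item Insert a middle row $\lrb{\Delta_3'^{-1}\circ\Gamma_0\circ\Delta_3'}\to\lrb{\Delta_3'^{-1}\circ\Gamma_1\circ\Delta_3'}$.
\item Connect it to the bottom row $\lrb{\Delta_3^{-1}\circ\Gamma_0\circ\Delta_3}\to\lrb{\Delta_3^{-1}\circ\Gamma_1\circ\Delta_3}$ by the Reidemeister III isomorphisms of Proposition~\ref{prop:RIIImove}, applied to the two half twists.
\item This lower square commutes on the nose. The Reidemeister III maps act only on the braid factors, and the six-valent vertex acts only on the web factor between them, so the interchange law applies.
\item Composed with the $\Delta_3'$-built vertical maps, these isomorphisms give the $\Delta_3$-built maps of the statement. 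The slides in Figure~\ref{fig:movieconjstar} already pass through such Reidemeister III moves before crossing an edge of the other color.
\item The upper square is Lemma~\ref{lem:six valent} with colors swapped. Stacking the two squares gives the lemma.
\end{itemize}
The paper says precisely this: the bottom half ``clearly commutes'' and the top half is Lemma~\ref{lem:six valent} after swapping colors.

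Once you apply the color swap to the whole conclusion of Lemma~\ref{lem:six valent} in this way, you do not need to redo the $h_3$-decomposition, the table, or the trace. Your frame-by-frame recheck and the fallback computations would only re-verify that symmetry.
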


\begin{proof}
    The diagram is a composition of two diagrams. \begin{equation*}
        \halftwiststaraproof
    \end{equation*}The bottom half one clearly commutes up to homotopy, and the top half one commutes up to homotopy by Lemma~\ref{lem:six valent} after swapping colors.
\end{proof}

\subsection{Four-valent vertices}

\begin{lem}\label{lem:four valent}
    The following diagram commutes up to homotopy.
    \begin{equation*}
        \halftwistcross
    \end{equation*}
\end{lem}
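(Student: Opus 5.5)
I would follow the same strategy as in Lemmas~\ref{lem:twist merge} and~\ref{lem:six valent}. Rather than producing a homotopy, I would show that the composite from the top left corner to the top right corner equals the four-valent vertex on the nose. Here the composite means: introduce the half twists via Reidemeister II, slide through the four-valent vertex region, and eliminate via Reidemeister II. The four-valent vertex involves two distant colors, say red $c_i$ and green $c_j$ with $|i-j|>1$. Its foam (Figure~\ref{fig:fourvalentfoam}) is isotopic to an identity foam that only reorders two far-apart thick edges.

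The first step is a localization. Conjugation by $\Delta_k$ on the web $c_i\circ_1 c_j$ can be arranged so that the half twist slides through each thick edge separately. This works because, by the slide maps of Proposition~\ref{prop:atomicslide} and the twist maps of Definition~\ref{defn:fork twist}, the map $\rho_w$ of Definition~\ref{defn:n strand twist} for $c_i\circ_1c_j$ factors as the composite for $c_i$ followed by the one for $c_j$. This factorization respects the auxiliary cubical grading from the crossings of $\Delta$, exactly as the $h_3$ grading is respected in Lemma~\ref{lem:six valent}.

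The four-valent vertex itself does not interact with the crossings of $\Delta$. So the comparison reduces to a single relation: swapping the order of the two slides, green-then-red versus red-then-green, must agree with conjugating the four-valent vertex. The left side of the diagram gives $c_{k-i}\circ_1c_{k-j}$ after flipping, and the right side gives $c_{k-j}\circ_1c_{k-i}$.

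In Soergel calculus, each slide map is a sum of diagrams built from dots, trivalent vertices and the twist black dot. These diagrams are supported in disjoint columns, strands $i,i+1$ for red and $j,j+1$ for green. The only potential interaction comes from four-valent crossings that move the red strand past the strands of the other color when the two slide maps are commuted. I would therefore use the standard distant-color relations of \cite[Definition 4.4]{stroppel2024braiding}. These say that four-valent vertices slide freely past dots, trivalent vertices, polynomials in far-away variables, and other four-valent vertices. With them I would check that the two composites coincide term by term. The sign is $(-1)^{\chi_2^-(W)}=+1$, since $W^{(2)}$ is two rectangles and $\chi_2^-(W)=0$, so no sign should appear.

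The main obstacle is the black dot of (\ref{eqn:defn_black_dot}). Its expansion involves a polynomial $x_a-x_b$ on strands adjacent to a color, and sliding a four-valent crossing past such polynomials is only free when the variables are far from the other color. In the flipped configuration the strand indices are permuted by $w_0$, so I would check that this permutation does not bring the relevant variables adjacent to the other color. The cases $|i-j|=2$, where strand indices are close, are the ones to verify by hand first. If commutation only holds after a Reidemeister III rearrangement of $\Delta_k$, I would insert the filtered Reidemeister III maps of Proposition~\ref{prop:RIIImove} and compare in the lowest auxiliary grading, as in Lemma~\ref{lem:h3=0}.
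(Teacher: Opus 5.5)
Your reduction fails at the claim that the two slide maps are supported in disjoint columns. Take the case the lemma is about: four strands, red $c_1$ and green $c_3$.

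\begin{itemize}
\item The slide $\rho_{c_1}\colon\lrb{\Delta_4^{-1}c_1\Delta_4}\to\lrb{c_3}$ must turn $c_1$ into $c_3$, so it cannot live on strands $1,2$. It is a chain map out of a $12$-crossing complex whose resolutions contain thick edges of all three colors. These include $c_2$, which is adjacent to both red and green. Its target lies on strands $3,4$, the columns of the green edge. Compare Proposition~\ref{prop:slide3strandtwist}: sliding $\Delta_3$ through a red edge necessarily produces blue. So distant-color relations cannot commute the two slides term by term, and the black-dot polynomials are not the real obstacle.
\item Saying the vertex ``does not interact with $\Delta$'' hides the actual content of the lemma. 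One route inserts the cancelling pair $\Delta\Delta^{-1}$ between $c_1$ and $c_3$; the other inserts it between $c_3$ and $c_1$. Reconciling these two insertions across the vertex is exactly what must be proved.
\item Your fallback of comparing in the lowest auxiliary grading also fails. The RII insertion and removal maps pick up all palindromic resolutions, so the composite is the sum of its pieces over every auxiliary grading. No single piece determines it: in Lemma~\ref{lem:six valent} the lowest piece is twice the vertex, and here it is $0$.
\end{itemize}

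The missing idea is to factor conjugation by $\Delta_4$ into two pieces: conjugation by the $(2,2)$ shuffle braid $\Delta_{2,2}$, and conjugation by separate two-strand half twists on strands $\{1,2\}$ and $\{3,4\}$. Your distant-color argument does handle the second piece. Each fork twist touches only one color, so that square commutes on the nose; this is Lemma~\ref{lem:double_fork_twist}. The shuffle is where the strands of the two colors are actually exchanged, and it needs a real computation. The paper runs a movie made of:
\begin{itemize}
\item RII moves;
\item two-strand slides through thick edges, each a composite of two atomic slides from Proposition~\ref{prop:atomicslide};
\item isotopies $\Delta_{2,2}\leftrightarrow\Delta'_{2,2}$;
\item the four-valent vertex.
\end{itemize}
It then evaluates the composite in each $h_4$-grading. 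It finds $f_0=f_1=f_3=f_4=0$, while $f_2$, a trace of $6\times 6$ matrices, equals the flipped vertex. Your sign $\chi_2^-(W)=0$ is correct.
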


\begin{proof}
    The half twist conjugation on four strands is a composition of two operations: the conjugation action of the $(2,2)$ shuffle braid and the conjugation action of two separate half twists on the top two and bottom two strands, as depicted in Figure~\ref{fig:decompose 4 strand half twist}. \begin{figure}[hbtp]
        \centering
        \[\halftwistfourstrands\simeq\deltatwotwo\circ\doublefork\]
        \caption{Decomposing the half twist conjugation on four strands into two operations.}
        \label{fig:decompose 4 strand half twist}
    \end{figure}It then suffices to prove that these two operations commute with the four-valent vertices up to homotopy, which is done in Lemmas~\ref{lem:shuffle_vertex} and~\ref{lem:double_fork_twist}, respectively.
\end{proof}

To establish the commutativity of the four-valent vertex and the $(2,2)$ shuffle braid, we first need to write down the map induced by sliding a $(2,2)$ shuffle braid through a thick edge.

\begin{prop}\label{prop:slide22shuffle}
    The map induced by sliding the $(2,2)$ shuffle braid $\Delta_{2,2}$ through a green thick edge, given by a composition of two slide maps described in Proposition~\ref{prop:atomicslide}, is depicted in Figure~\ref{fig:slide_sf22_ltr}. The inverse of this map is depicted in Figure~\ref{fig:slide_sf22_rtl}. The map induced by sliding another $(2,2)$ shuffle braid $\Delta_{2,2}'$ through a red thick edge can be obtained by switching the red color and the green color in Figure~\ref{fig:slide_sf22_ltr}; its inverse is given by switching colors in Figure~\ref{fig:slide_sf22_rtl}.
\end{prop}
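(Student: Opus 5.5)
The plan is to write $\Delta_{2,2}$ as a product of elementary crossings and compose the corresponding atomic slides. The $(2,2)$ shuffle braid is the four-crossing braid $\sigma_2\sigma_1\sigma_3\sigma_2$ (in the paper's conventions, possibly with the mirror word). It moves the two strands carrying the green thick edge past the other two strands. Sliding the green edge through $\Delta_{2,2}$ therefore amounts to sliding it past two ``columns'' of crossings, one after the other.

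Concretely, I first slide the thick edge through the two crossings in which its left endpoint strand participates. I model this on the one-strand-past-a-thick-edge move, using $\slide_{\one_1,B_1}$ from Proposition~\ref{prop:atomicslide} after expanding the two crossings via the cube of resolutions. I then slide it through the remaining two crossings using $\slide_{B_1,\one_1}$ (or a color-relabelled copy). Because these two atomic slides act on disjoint pairs of crossings, their composition is a tensor-type product of the two $2$-term (or $3$-term) matrices. I would then simplify the result entrywise by Soergel calculus: isotopy, four-valent vertex relations, and the Frobenius relations for merge/split and dots. The goal is the closed matrix form recorded in Figure~\ref{fig:slide_sf22_ltr}, organized by the auxiliary grading given by the number of $1$-resolutions among the four crossings of $\Delta_{2,2}$. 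Components that change this grading by more than allowed should vanish, just as the zero entries do in Proposition~\ref{prop:slide3strandtwist}.

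For the inverse, I compose the inverse atomic slides $\slide^{-1}_{\one_1,B_1}$ and $\slide^{-1}_{B_1,\one_1}$ in the reverse order. Since each factor is a homotopy inverse by Proposition~\ref{prop:atomicslide}, the composite is automatically a homotopy inverse. What remains is simplifying it into the form of Figure~\ref{fig:slide_sf22_rtl}. The statement for $\Delta_{2,2}'$ and the red edge follows formally: the whole computation is invariant under the relabelling of colors induced by the symmetry of the $4$-strand picture that exchanges the roles of the two thick-edge positions, so the same matrices with colors switched give the answer.

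I expect the main obstacle to be the bookkeeping of signs and shifts when composing the two atomic slides. The Koszul signs $(-1)^{s(u,v)}$ in the cube differential depend on the ordering of crossings, and the intermediate frame orders them differently from the source and target. I would fix a crossing order compatible with both slides, carry the sign through explicitly, and then verify the final matrix is a chain map by checking that it commutes with the zip/unzip differentials. This check also catches any sign error.
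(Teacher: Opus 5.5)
Your overall route is the paper's: realize the slide as a composite of two atomic slides from Proposition~\ref{prop:atomicslide}, sort the result by the number $h_4$ of $1$-resolutions among the four crossings, and get the inverse by composing the inverse atomic slides in reverse order. However, two steps are wrong as you describe them.

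\textbf{Which atomic slides are composed.} The two ``columns'' of $\Delta_{2,2}$ are indexed by the two \emph{non-thick} strands. A column is the pair of crossings of one non-thick strand with both strands of the green pair, and both non-thick strands lie on the same side of that pair. Hence the composite consists of two instances of the \emph{same} atomic slide, namely whichever of $\slide_{\one_1,B_1}$, $\slide_{B_1,\one_1}$ has the single strand on that side. Each instance is tensored with an identity strand, and the second is shifted by one position. The thick edge moves from an end pair of strands to the middle pair and then to the other end pair, which is why the $h_4=0$ entry is a composite of two six-valent vertices. Following one type of atomic slide by the other type does not match the intermediate frame, where the remaining single strand is still on the original side of the thick edge. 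The same correction applies to the inverse. The $\Delta'_{2,2}$/red case is the composite of two copies of the other atomic slide.

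\textbf{Why the last two entries vanish.} Your explanation of the zero entries is off. On both sides the homological degree equals $h_4$ up to a fixed shift, so the composite is automatically $h_4$-homogeneous. More precisely, each atomic slide preserves the number of $1$-resolutions in its own column and is the identity on the other column. So the composite is homogeneous for the bigrading $(i,j)\in\{0,1,2\}^2$ with $h_4=i+j$. There are no grading-changing components to discard, here or in Proposition~\ref{prop:slide3strandtwist}. The two zeros in Figure~\ref{fig:slide_sf22_ltr} are the diagonal components at $h_4=3,4$. They vanish because every bidegree with $i+j\ge 3$ has $i=2$ or $j=2$, and the atomic slide's component in its top homological degree is zero. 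This is exactly the observation the paper's proof rests on; the remaining entries are obtained by composing and simplifying. Also, the Koszul-sign bookkeeping you anticipate does not arise: the four crossings keep their order throughout, and tensoring a degree-zero chain map with an identity introduces no sign.
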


\begin{proof}
    This is a direct computation using Proposition~\ref{prop:atomicslide}. The last two terms are necessarily zero because the map in the highest homological degree in Proposition~\ref{prop:atomicslide} vanishes, so we do not spell out the intermediate maps in the figures.
\end{proof}

\begin{figure}[hbtp]
    \centering
    \tikzexternaldisable
    \begin{gather*}
        \left(
        \slidesfttltr\right)\\
        =\left(\slidesfttltra,\,\slidesfttltrb,\,\slidesfttltrc,\,0,\,0\right).
        \end{gather*}
    \tikzexternalenable
    \caption{Slide the $(2,2)$ shuffle braid $\Delta_{2,2}$ through a green edge, left to right. In the figure, bullets in the lines indicate the chain groups that can be read off from the context; bullets between the lines indicate the chain maps that are irrelevant to the final computation.}
    \label{fig:slide_sf22_ltr}
\end{figure}

\begin{figure}[hbtp]
    \centering
    \tikzexternaldisable
    \begin{gather*}
        \left(
        \slidesfttrtl\right)\\
        =\left(\slidesfttrtla,\,\slidesfttrtlb,\,\slidesfttrtlc,\,0,\,0\right).
        \end{gather*}
    \tikzexternalenable
    \caption{The inverse of the map in Figure~\ref{fig:slide_sf22_ltr}.}
    \label{fig:slide_sf22_rtl}
\end{figure}

\begin{lem}\label{lem:shuffle_vertex}
    The following diagram commutes up to homotopy.
    \begin{equation*}
        \shufflecross
    \end{equation*}
    Here \[\Delta_{2,2}\coloneqq\deltatwotwo\]is the $(2,2)$ shuffle braid, and $\rho_{2,2}$ is a composition of several slide maps from Proposition~\ref{prop:atomicslide}.
\end{lem}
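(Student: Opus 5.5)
Following the strategy of Lemmas~\ref{lem:twist end}, \ref{lem:twist merge} and~\ref{lem:six valent}, I will show that the movie composition from the top-left to the top-right corner of the diagram equals the four-valent vertex on the nose, not just up to homotopy. The movie is: a Reidemeister II move introducing $\Delta_{2,2}\Delta_{2,2}^{-1}$, then sliding $\Delta_{2,2}$ through the green thick edge via Proposition~\ref{prop:slide22shuffle}, then the four-valent vertex, then sliding $\Delta_{2,2}'$ back through the red thick edge (the color-swapped version of Figure~\ref{fig:slide_sf22_rtl}), and finally the Reidemeister II move eliminating the pair. The source and target of the top row are one-term complexes, so there is no room for a homotopy. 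It therefore suffices to compute this composite and show it equals the four-valent vertex.

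As in Lemma~\ref{lem:six valent}, I will introduce an auxiliary grading $h_4\in\{0,\dots,4\}$: the number of $1$-resolutions among the four crossings of the shuffle braid being slid. By Propositions~\ref{prop:atomicslide} and~\ref{prop:slide22shuffle}, the slide maps preserve $h_4$, and their components vanish for $h_4\ge 3$. By Proposition~\ref{prop:RIImove}, the outer Reidemeister II maps contribute only when the resolution of the inverse shuffle is compatible (palindromic), and their effect is a partial trace over the strands of the stationary $\Delta_{2,2}^{-1}$. So the total map splits as $g_0+g_1+g_2$, where $g_i$ is the $h_4=i$ piece.

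I then compute each piece by composing the three nonzero components of Figure~\ref{fig:slide_sf22_ltr}, the four-valent vertex, and the matching components of the inverse slide, and taking the trace. For $g_0$ there is a single-term composite. I expect its simplification to use the four-valent vertex relations (sliding dots and merges through the crossing, and the Reidemeister II type relation for two four-valent vertices in Soergel calculus) to yield $\pm$ the four-valent vertex. For $g_1$ and $g_2$ the components form small matrices. I expect most trace terms to vanish because they contain a closed loop bounding a disk or a barbell of the wrong degree, and the survivors to combine with $g_0$ to give exactly one copy of the four-valent vertex. The sign from the number of Reidemeister II caps must match $\chi_2^-=0$ in Proposition~\ref{prop:flip elementary gln foam}.

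The main obstacle is the bookkeeping in $g_1$ and $g_2$. The unspecified ``$\bullet$'' entries of Figure~\ref{fig:slide_sf22_ltr} must be shown irrelevant, meaning they only pair with zero entries or drop out under the trace. The signs coming from the Reidemeister II cancellations must also be tracked. If direct cancellation is messy, I would first try to exploit the color-swap symmetry between the green and red slides to pair terms, as in the $C_2$ symmetry used for Table~\ref{tab:computing huge star}.
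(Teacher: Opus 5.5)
Your framework matches the paper's. You compute the composite between the one-term complexes in the top row on the nose, split it by the auxiliary grading $h_4$, discard $h_4\ge 3$ using the vanishing top-degree components of Proposition~\ref{prop:slide22shuffle}, and read the outer Reidemeister II maps as a partial trace.

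However, the movie you set up is wrong, and the resulting composite does not typecheck. The web contains two thick edges of distant colors, and the shuffle must be slid through \emph{both} of them before the four-valent vertex can be applied. After a single slide through the green edge, the shuffle sits between the two thick edges, so there is no four-valent vertex to apply. The same holds after the vertex, where the order of the thick edges has been swapped, so you must again slide through both. Moreover, Proposition~\ref{prop:slide22shuffle} only provides slides of $\Delta_{2,2}$ through green and of $\Delta'_{2,2}$ through red. Each slide therefore has to be preceded by the isotopy $\Delta_{2,2}\simeq\Delta'_{2,2}$ of Figures~\ref{fig:slide_sf22_twotypes} and~\ref{fig:slide_sf22_twotypesi}. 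The correct movie (Figure~\ref{fig:movecross}) has four slides and four such isotopies, and its $h_4=i$ piece is
\[
f_i=\operatorname{tr}\left(A_i^{-1}X_i^{-1}B_i^{-1}X_i\cdot V\cdot A_iX_i^{-1}B_iX_i\right).
\]
Here $A_i$ and $B_i$ are the two color-swapped slides, $X_i$ is the isotopy, and $V$ is the four-valent vertex. You then also need the isotopy maps to preserve $h_4$.

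Second, the computation, which is the entire content of the lemma, is left as expectations, and those expectations point the wrong way. In the paper, $f_0=0$. Its single-term composite is built from six-valent vertices (two per slide) and four-valent vertices. It is isotoped to a symmetric position and simplified with the parabolic relation \cite[(4.27)]{stroppel2024braiding}. After expanding the bubbles via \cite[(4.25)]{stroppel2024braiding}, all sixteen terms vanish. Likewise $f_1=0$: all sixteen trace terms vanish, the last term of the $(4,4)$ entry again requiring the parabolic relation. The whole answer comes from $f_2$. There, both conjugating products have single-term entries with several zero rows and columns, and only the $(3,3)$ diagonal entry contributes, giving exactly one copy of the four-valent vertex.

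In particular, the dot, merge, and four-valent-vertex relations you plan to use cannot simplify $f_0$; you need relations involving six-valent vertices. To turn your plan into a proof, you must fix the movie and then actually carry out these three trace computations.
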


\begin{proof}
    As in Lemma~\ref{lem:six valent}, we check that the composition $\rho_{2,2}\circ\fourv\circ\rho_{2,2}^{-1}$ from the top left term to the top right term is the same as the map $\fourva$, without worrying about the possible homotopy. The movie realizing this composition is depicted in Figure~\ref{fig:movecross}. In this figure, \[\Delta'_{2,2}\coloneqq\deltatwotwoa\]is another $(2,2)$ shuffle braid, related to $\Delta_{2,2}$ by an isotopy.

    \begin{figure}[hbtp]
        \centering
        \[\movecross\]
        \caption{Conjugating the four-valent vertex and the $(2,2)$ shuffle braid by the half twist.}
        \label{fig:movecross}
    \end{figure}

    There are four types of moves in Figure~\ref{fig:movecross}:\begin{itemize}
        \item Reidemeister II moves (Frame 1 to 2, and 11 to 12).

        \item Isotopies between $\Delta_{2,2}$ and $\Delta_{2,2}'$ (Frame 2 to 3, 4 to 5, 7 to 8, and 9 to 10).

        \item The four-valent vertex (Frame 6 to 7).

        \item The moves that slide two strands through thick edges (Frame 3 to 4, 5 to 6, 8 to 9, and 10 to 11).
    \end{itemize}
    The induced maps of Reidemeister II moves have been described in Proposition~\ref{prop:RIImove}. The induced maps of the isotopy between $\Delta_{2,2}$ and $\Delta_{2,2}'$ are given in Figures~\ref{fig:slide_sf22_twotypes} and~\ref{fig:slide_sf22_twotypesi}. Moves of the last type are compositions of two maps described in Proposition~\ref{prop:atomicslide}; we have written them down explicitly in Proposition~\ref{prop:slide22shuffle}.

    \begin{figure}[hbtp]
    \centering
    \tikzexternaldisable
    \begin{gather*}
       \vcenter{\hbox{\includegraphics{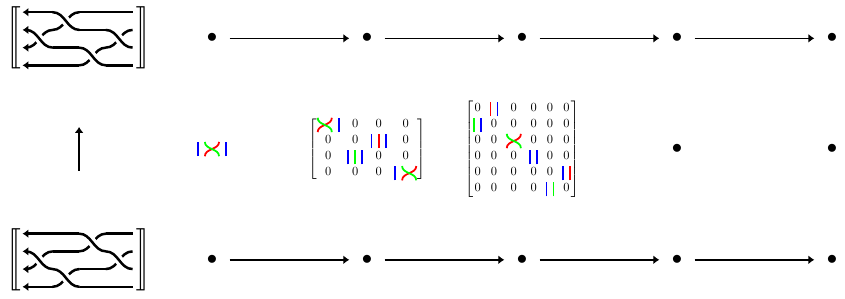}}}
        \end{gather*}
    \tikzexternalenable
    \caption{The map induced by an isotopy between $\Delta_{2,2}$ and $\Delta_{2,2}'$; parts irrelevant to later computation are omitted.}
    \label{fig:slide_sf22_twotypes}
\end{figure}

\begin{figure}[hbtp]
    \centering
    \tikzexternaldisable
    \begin{gather*}
       \vcenter{\hbox{\includegraphics{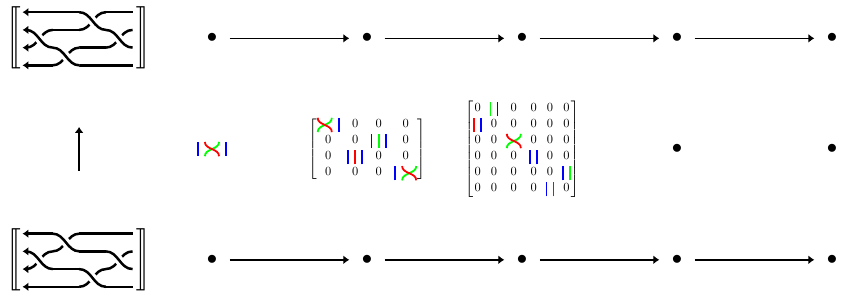}}}
        \end{gather*}
    \tikzexternalenable
    \caption{The inverse of the map in Figure~\ref{fig:slide_sf22_twotypes}.}
    \label{fig:slide_sf22_twotypesi}
\end{figure}

    As in the situation of Lemma~\ref{lem:six valent}, four crossings in $\Delta_{2,2}^{-1}$ do not participate in the movie from Frame 2 to Frame 11. By Proposition~\ref{prop:RIImove}, a resolution of eight crossings in the second frame must be \textit{palindromic} to support a nonzero image from Frame 1. Moreover, the effect of the movie from the first frame to the second and from the second-to-last frame to the last one is to take a ``partial trace'' on all but the two rightmost strands in Soergel calculus diagrams, up to sign.

    For the chain complexes associated to Frame 2 to Frame 11, we define an auxiliary grading $h_4\in\{0,1,2,3,4\}$ as the number of $1$-resolutions when resolving four crossings in $\Delta_{2,2}$ or $\Delta_{2,2}'$. It is clear from Proposition~\ref{prop:atomicslide} that the map induced by movie from Frame 2 to Frame 11 preserves the $h_4$ grading. Denote the map induced by the movie in Figure~\ref{fig:movecross} in $h_4$ grading $i$ by $f_i$ ($i=0,\,1,\,2,\,3,\,4$). We claim that all $f_i$ but $f_2$ vanish, whereas $f_2=\fourva$. It is easy to see from Proposition~\ref{prop:slide22shuffle} that $f_3$ and $f_4$ vanish. It remains to compute $f_0$, $f_1$, and $f_2$; this is done in Lemmas~\ref{lem:h40},~\ref{lem:h41}, and~\ref{lem:h42}, respectively. Therefore, the map induced by the movie in Figure~\ref{fig:movecross} is given by \[f_0+f_1+f_2+f_3+f_4=\fourva,\]as desired.
\end{proof}

\begin{lem}\label{lem:h40}
    The $h_4=0$ piece of the map induced by the movie in Figure~\ref{fig:movecross} is given by $f_0=0$.
\end{lem}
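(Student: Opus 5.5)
We need to show that the $h_4=0$ piece $f_0$ vanishes. The strategy copies Lemma~\ref{lem:h3=0}: we follow the movie in Figure~\ref{fig:movecross} restricted to the summand where all four crossings of $\Delta_{2,2}$ (or $\Delta'_{2,2}$) take their $0$-resolution, and we concatenate the component maps.

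First I will read off the components in this grading.
\begin{itemize}
    \item By Proposition~\ref{prop:RIImove}, the Reidemeister II map from Frame~1 to Frame~2 lands only on palindromic resolutions. In $h_4=0$ this forces the four crossings of $\Delta_{2,2}^{-1}$ to take the matching resolution, so there is a single term.
    \item The isotopy maps between $\Delta_{2,2}$ and $\Delta'_{2,2}$ in Figures~\ref{fig:slide_sf22_twotypes} and~\ref{fig:slide_sf22_twotypesi} contribute their lowest-degree entries.
    \item The slide maps contribute the first entries of Figures~\ref{fig:slide_sf22_ltr} and~\ref{fig:slide_sf22_rtl}.
    \item The four-valent vertex sits in the middle.
\end{itemize}
Each of these is a single Soergel diagram, possibly carrying black dots as in (\ref{eqn:defn_black_dot}). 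Composing all of them and applying the closing Reidemeister II map takes a partial trace on all but the two rightmost strands, up to a global sign. The result is one closed-up Soergel diagram, which I then show is zero.

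To evaluate this diagram, I first try a structural argument instead of expanding everything. The $h_4=0$ entry of the slide map in Proposition~\ref{prop:atomicslide} is built from a start dot and an end dot on the thick edge being slid through. After the partial trace, I expect the green (respectively red) strand entering the four-valent vertex to be capped by an end dot on one side, and fed by a start dot or merge on the other. Using the standard Soergel relations (barbell and needle relations, and the rule that a four-valent vertex slides past dots of distant colors), the diagram should reduce to one containing either a needle or a closed colored loop bounding a disk. Such a diagram is zero, by the same mechanism that kills three of the trace terms in Lemma~\ref{lem:h3=1}.

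If the black-dot decorations obstruct this argument, the fallback is to expand each black dot into its $a$- and $b$-resolutions and use the symmetry of the movie. Swapping the green and red colors, together with the left-right reflection relating the halves of the movie, should pair the terms so that they cancel in pairs, possibly with the help of relation (\ref{eqn:blackdot_rotate}).

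The main obstacle is this final simplification: showing that the traced composite is honestly zero rather than a multiple of $\fourva$. In particular, I need to track the black dots through the four-valent vertex, since dots of one color are not central with respect to the other's merge/split structure. I would handle this by sliding the polynomial dots across the four-valent vertex using the polynomial forcing relation. The correction terms this produces should each contain a broken strand, and those are killed by the same needle relation.
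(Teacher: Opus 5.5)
\textbf{Gap: the $h_4=0$ components are misidentified.} Because of this, neither of your two routes engages the diagram that actually has to be evaluated.

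\emph{No black dots occur.} The map $\rho_{2,2}$ in Lemma~\ref{lem:shuffle_vertex} is built only from slide maps (Proposition~\ref{prop:atomicslide}) and the isotopies between $\Delta_{2,2}$ and $\Delta'_{2,2}$. No fork twist from Definition~\ref{defn:fork twist} enters the movie of Figure~\ref{fig:movecross}; the fork twists are split off into Lemma~\ref{lem:double_fork_twist}. So there are no black dots to expand or to slide through the four-valent vertex.

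\emph{No start or end dots occur either.} When all four crossings of $\Delta_{2,2}$ are at their $0$-resolution, they are thick edges. The corresponding component of an atomic slide is then a six-valent vertex, not a start dot paired with an end dot. Hence:
\begin{itemize}
\item each of the four slide moves contributes a composition of two six-valent vertices;
\item each isotopy between $\Delta_{2,2}$ and $\Delta'_{2,2}$, and the middle step, contributes a four-valent vertex;
\item $f_0$ is the partial trace of one dot-free diagram assembled from these vertices.
\end{itemize}
The strands entering the four-valent vertex are not capped by end dots. They run through six-valent vertices into the traced-out lines of $\Delta_{2,2}$.

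So the reduction to a needle or an empty loop that you anticipate has no basis. Nor can an argument based only on the shape of such a traced composite work. Here $f_0$ is a map with the same source and target as the four-valent vertex. In Lemma~\ref{lem:h3=0}, the analogous traced composite comes out to twice the six-valent vertex, not zero. The entire content of the lemma is computing which multiple occurs.

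\textbf{What is needed is an actual evaluation.} The paper proceeds as follows:
\begin{enumerate}
\item It isotopes the traced diagram into a symmetric position.
\item It simplifies it with the parabolic relation \cite[(4.27)]{stroppel2024braiding}, as in Figure~\ref{fig:hugestar_four_symmetric}.
\item It expands the resulting red and green bubbles using \cite[(4.25)]{stroppel2024braiding}. This produces $2^4=16$ terms.
\item Modulo the $C_2\times C_2$ symmetry (rotation by $180^\circ$ and swapping red and green), these reduce to $6$ cases. Each case vanishes individually.
\end{enumerate}
Your fallback, expanding into $2^k$ terms and exploiting the color-swap symmetry, has the right general shape. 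However, it expands objects that are not present, and it omits the parabolic-relation step that makes the large diagram tractable. It also predicts pairwise cancellation, whereas what actually happens is that each term vanishes on its own.
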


\begin{proof}
    From Frame 2 to Frame 11 in Figure~\ref{fig:movecross}, the map induced by the movie is a composition of $9$ maps. In the $h_4=0$ piece, each map is a single-term map: either a composition of two six-valent vertices as in Figures~\ref{fig:slide_sf22_ltr} and~\ref{fig:slide_sf22_rtl}, or a single four-valent vertex. As in the computation in Lemma~\ref{lem:h3=0}, we can isotope the diagram into a symmetric form and simplify it using a parabolic relation (cf. \cite[(4.27)]{stroppel2024braiding}), as depicted in Figure~\ref{fig:hugestar_four_symmetric}. The rightmost diagram in Figure~\ref{fig:hugestar_four_symmetric} can be further simplified by expanding the red and green bubbles using \cite[(4.25)]{stroppel2024braiding}. Again, as in the situation of Lemma~\ref{lem:h3=0}, there are $2^4=16$ terms in the expansion. Modulo the $C_2\times C_2$ symmetry coming from the $180^\circ$ rotation and color swapping, there are $6$ cases to check, and it turns out that all of them vanish (this is easier than the check in Lemma~\ref{lem:h3=0}). Therefore, we have $f_0=0$.

     \begin{figure}[hbtp]
        \centering
        \begin{gather*}
            \hugestarfour=\hugestarfoursymmetric=\hugestarfoursymmetrica.
        \end{gather*}
        \caption{Isotope the $h_4=0$ piece of the map to a symmetric position and simplify using a parabolic relation.}
        \label{fig:hugestar_four_symmetric}
    \end{figure}

\end{proof}

\begin{lem}\label{lem:h41}
    The $h_4=1$ piece of the map induced by the movie in Figure~\ref{fig:movecross} is given by $f_1=0$.
\end{lem}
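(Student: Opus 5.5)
The plan is to mirror the structure of the proofs of Lemma~\ref{lem:h3=1} and Lemma~\ref{lem:h40}. In the $h_4=1$ piece, each map from Frame 2 to Frame 11 in Figure~\ref{fig:movecross} is a $4\times 4$ matrix. The rows and columns are indexed by which of the four crossings in $\Delta_{2,2}$ (or $\Delta'_{2,2}$) carries the $1$-resolution. The exception is Frame 6 to Frame 7, which is the single four-valent vertex tensored with the identity on the resolved braid part.

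The pre- and post-composition with the Reidemeister II moves (Frames 1--2 and 11--12) then acts, as in Lemma~\ref{lem:h3=1}, by taking a trace: we close up the two leftmost pairs of strands and sum the diagonal entries. By Proposition~\ref{prop:RIImove}, only palindromic resolutions contribute. The sign is determined by the number of caps, which is two here, as in Lemma~\ref{lem:h3=1}, so we expect no sign change.

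The key steps are as follows.
\begin{enumerate}
\item First, precompose each slide move (Proposition~\ref{prop:slide22shuffle}, Figures~\ref{fig:slide_sf22_ltr} and~\ref{fig:slide_sf22_rtl}) with the adjacent isotopy map between $\Delta_{2,2}$ and $\Delta'_{2,2}$ (Figures~\ref{fig:slide_sf22_twotypes} and~\ref{fig:slide_sf22_twotypesi}). This gives two composite $4\times4$ matrices, one before and one after the four-valent vertex. Many of their entries should vanish, because only the first nonzero component (the $h_4=1$ term $\slidesfttltrb$ and its inverse) survives.
\item Next, write $f_1=\operatorname{tr}(L\circ \fourv\circ R)$ with $L,R$ these composites.
\item Finally, examine the four diagonal entries one at a time. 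For each diagonal entry, I expect the closed-up Soergel diagram either to contain a colored loop bounding a disk, and hence to vanish as in Lemma~\ref{lem:h3=1}, or to reduce via the parabolic relation \cite[(4.27)]{stroppel2024braiding} and bubble expansion \cite[(4.25)]{stroppel2024braiding} to a pair of terms that cancel.
\end{enumerate}

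The $C_2\times C_2$ symmetry (the $180^\circ$ rotation and the red/green swap) should permute the four diagonal entries in pairs. So I would only need to compute two entries and then transport the result by symmetry, checking that the signs transported along with them are compatible.

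The main obstacle is the bookkeeping in the third step. The four diagonal terms need not vanish individually; they may cancel only in pairs. In that case the signs coming from the black-dot convention (the relation~(\ref{eqn:blackdot_rotate})) and from the Koszul signs in the isotopy maps must be tracked carefully. To handle this, I would first bring each surviving term into a symmetric form, as in Figure~\ref{fig:hugestar_four_symmetric}, and only then expand the black dots. The goal is to exhibit either a disk-bounding loop or an explicit cancelling partner under the color swap, which would give $f_1=0$.
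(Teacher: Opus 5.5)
Your proposal follows essentially the same route as the paper. The paper writes $f_1$ as the trace (closing up the three leftmost strands) of the composite $A_1^{-1}X_1^{-1}B_1^{-1}X_1$, then the four-valent vertex, then $A_1X_1^{-1}B_1X_1$, notes that both composites have single-term entries, and checks that all $16$ resulting terms vanish individually: almost all contain a closed loop bounding a disk, and the one exception, the last term of the $(4,4)$ entry, is handled by isotoping to a symmetric position and applying the parabolic relation \cite[(4.27)]{stroppel2024braiding}. Drop your $C_2\times C_2$ shortcut, since the paper computes all four diagonal entries directly and singles out the $(4,4)$ entry, so there is no evidence the symmetry pairs them up; plan to check each entry.
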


\begin{proof}
    Denote the $4\times 4$ matrix representing the $h_4=1$ piece of the map depicted in Figure~\ref{fig:slide_sf22_ltr} by $A_1$, the matrix obtained by swapping red and green colors of entries in $A_1$ by $B_1$, and the matrix representing the $h_4=1$ piece of the map depicted in Figure~\ref{fig:slide_sf22_twotypes} by $X_1$. Then the map $f_1$ is given by \[f_1=\operatorname{tr}\left(A_1^{-1}\cdot X_1^{-1}\cdot B_1^{-1}\cdot X_1\cdot \fourv\cdot  A_1\cdot X_1^{-1}\cdot B_1\cdot X_1\right),\]where $\operatorname{tr}$ refers to taking the trace of the matrix and closing the three leftmost strands up. Note that here we slightly abuse the notation: the entries in matrices $A_1$, $B_1$, and $X_1$ have four strands, whereas our actual computation has five strands. The missing strand comes from the thick edge that is not involved in the local computation; the actual matrices are obtained by tensoring the entries in $A_1$, $B_1$, and $X_1$ with the identity map on the missing strand. 

    It turns out that the products $A_1^{-1}\cdot X_1^{-1}\cdot B_1^{-1}\cdot X_1$ and $A_1\cdot X_1^{-1}\cdot B_1\cdot X_1$ both have single-term entries, so the trace of the whole composition has $16$ terms. This is similar to the situation in Lemma~\ref{lem:h3=1}, and we check that all $16$ terms vanish. The $(1,1)$ entry of the composition is given by \[\hfouroneoneonea+\hfouroneoneoneb+\hfouroneoneonec+\hfouroneoneoned=0.\]The $(2,2)$ entry of the composition is given by \[\hfouronetwotwoa+\hfouronetwotwob+\hfouronetwotwoc+\hfouronetwotwod=0.\]The $(3,3)$ entry of the composition is given by \[\hfouronethreethreea+\hfouronethreethreeb+\hfouronethreethreec+\hfouronethreethreed=0.\]The $(4,4)$ entry of the composition is given by \[\hfouronefourfoura+\hfouronefourfourb+\hfouronefourfourc+\hfouronefourfourd=0.\]Here most of the terms vanish because one can trace closed curves that bound disks inside them, and the most nontrivial one is the last term in the $(4,4)$ entry, which is simplified by isotoping to a symmetric position and using a parabolic relation (cf. \cite[(4.27)]{stroppel2024braiding}) similar to Figure~\ref{fig:hugestar_four_symmetric}. 
\end{proof}

\begin{lem}\label{lem:h42}
    The $h_4=2$ piece of the map induced by the movie in Figure~\ref{fig:movecross} is given by $f_2=\fourva$.
\end{lem}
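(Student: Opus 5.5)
The plan follows the pattern of Lemma~\ref{lem:h41}. The $h_4=2$ piece of the movie in Figure~\ref{fig:movecross} is a composition of matrices restricted to $h_4$-grading $2$. Each matrix is indexed by the $\binom{4}{2}=6$ resolutions of the four crossings of $\Delta_{2,2}$ (or $\Delta_{2,2}'$) with exactly two $1$-resolutions. Let $A_2$ be the $h_4=2$ block of the slide map in Figure~\ref{fig:slide_sf22_ltr}, and let $B_2$ be the same block with red and green swapped. Let $X_2$ be the $h_4=2$ block of the isotopy map in Figure~\ref{fig:slide_sf22_twotypes}. Then
\[f_2=\operatorname{tr}\left(A_2^{-1}\cdot X_2^{-1}\cdot B_2^{-1}\cdot X_2\cdot \fourv\cdot A_2\cdot X_2^{-1}\cdot B_2\cdot X_2\right).\]
Here $\operatorname{tr}$ means the partial trace that closes the leftmost strands. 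It comes from the palindromic constraint of Proposition~\ref{prop:RIImove} on the Reidemeister II moves in Frames 1--2 and 11--12. I will first determine the sign this trace contributes. In Lemma~\ref{lem:h3=1} the two caps gave no sign; with two $1$-resolutions among the four crossings I expect the sign to be trivial again, but I will verify it by direct inspection of Proposition~\ref{prop:RIImove}.

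Next I will compute the two conjugated products $P=A_2^{-1}X_2^{-1}B_2^{-1}X_2$ and $Q=A_2X_2^{-1}B_2X_2$ before inserting the four-valent vertex. This is the analogue of grouping a Reidemeister III move with a slide in Lemma~\ref{lem:h3=1}. By Proposition~\ref{prop:slide22shuffle}, the entries of $A_2$ are the middle component of Figure~\ref{fig:slide_sf22_ltr}: start/end dots and merges/splits in green, together with identity strands. I expect $X_2$ to be close to a permutation matrix up to signs, since the isotopy $\Delta_{2,2}\simeq\Delta'_{2,2}$ is a Reidemeister III-type move. If so, $P$ and $Q$ should have mostly single-term entries, and many entries should vanish for degree reasons.

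Then I will evaluate the six diagonal entries of $P\cdot\fourv\cdot Q$ after closing up. For most resolutions, the resulting Soergel diagram contains a closed loop bounding a disk, or a barbell that cancels in pairs, so these entries vanish as in Lemma~\ref{lem:h41}. I expect the surviving contributions to come from resolutions where the two $1$-resolved crossings are the ones that the thick edges pass through without being capped. After isotopy, these should reduce to $\fourva$, possibly using the four-valent/six-valent relations and the parabolic relation \cite[(4.27)]{stroppel2024braiding}. The coefficients of the surviving terms should sum to $1$.

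The main obstacle is bookkeeping. Six diagonal entries, each a sum of several terms, must be tracked with the sign conventions of the slide maps and of the black dot~(\ref{eqn:defn_black_dot}). Degenerate terms where two different colors form a bubble must be expanded via \cite[(4.25)]{stroppel2024braiding}. To manage this, I will use the $C_2\times C_2$ symmetry (rotation by $180^\circ$ and color swap) to reduce the six entries to a few orbit representatives. I will fully compute only those representatives, and then check that the total is $\fourva$ with coefficient $+1$, consistent with $\chi_2^-=0$ in Proposition~\ref{prop:flip elementary gln foam}.
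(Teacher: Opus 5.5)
Your plan follows the paper's proof. It uses the same partial-trace formula over the six $h_4=2$ resolutions and first computes the conjugated products $A_2^{-1}X_2^{-1}B_2^{-1}X_2$ and $A_2X_2^{-1}B_2X_2$; these have single-term entries, with the first and last rows and the second and fifth columns vanishing. Of the diagonal entries, only the $(3,3)$ entry survives and gives $\fourva$, while the $(4,4)$ entry cancels to $0$. The paper evaluates these entries directly, without a symmetry reduction. Note also that the isotopy $\Delta_{2,2}\simeq\Delta'_{2,2}$ is a commutation of distant crossings, not a Reidemeister III move, though this does not affect your argument.
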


\begin{proof}
    Denote the $6\times 6$ matrix representing the $h_4=2$ piece of the map depicted in Figure~\ref{fig:slide_sf22_ltr} by $A_2$, the matrix obtained by swapping red and green colors of entries in $A_2$ by $B_2$, and the matrix representing the $h_4=2$ piece of the map depicted in Figure~\ref{fig:slide_sf22_twotypes} by $X_2$. Then the map $f_2$ is given by \[f_2=\operatorname{tr}\left(A_2^{-1}\cdot X_2^{-1}\cdot B_2^{-1}\cdot X_2\cdot \fourv\cdot  A_2\cdot X_2^{-1}\cdot B_2\cdot X_2\right),\]where $\operatorname{tr}$ refers to taking the trace of the matrix and closing the two leftmost strands up. We again abuse the notation as in Lemma~\ref{lem:h41}. 

    Again, the products $A_2^{-1}\cdot X_2^{-1}\cdot B_2^{-1}\cdot X_2$ and $A_2\cdot X_2^{-1}\cdot B_2\cdot X_2$ both have single-term entries. Moreover, many terms in these matrices vanish: \[A_2^{-1}\cdot X_2^{-1}\cdot B_2^{-1}\cdot X_2=\begin{bmatrix}
        0 & 0 & 0 & 0 & 0 & 0\\
        \bullet & 0 & \bullet  & \bullet  & 0 & \bullet \\
       \bullet & 0 & {\color{red}\bullet}  & {\color{red}\bullet}  & 0 & \bullet \\
        \bullet & 0 & {\color{red}\bullet} & {\color{red}\bullet}  & 0 & \bullet \\
       \bullet & 0 & \bullet  & \bullet  & 0 & \bullet \\
        0 & 0 & 0 & 0 & 0 & 0
    \end{bmatrix},\ A_2\cdot X_2^{-1}\cdot B_2\cdot X_2=\begin{bmatrix}
        0 & 0 & 0 & 0 & 0 & 0\\
        \bullet & 0 & \bullet  & \bullet  & 0 & \bullet \\
       \bullet & 0 & {\color{red}\bullet}  & {\color{red}\bullet}  & 0 & \bullet \\
        \bullet & 0 & {\color{red}\bullet} & {\color{red}\bullet}  & 0 & \bullet \\
       \bullet & 0 & \bullet  & \bullet  & 0 & \bullet \\
        0 & 0 & 0 & 0 & 0 & 0
    \end{bmatrix}.\]Here bullets indicate entries that are possibly nonzero, and red bullets indicate entries that possibly contribute to the trace. All diagonal entries of the composition are zero except for the $(3,3)$ and $(4,4)$ entries. The $(3,3)$ entry is given by \[\hfourtwothreethreea+\hfourtwothreethreeb=\fourva+0=\fourva,\]and the $(4,4)$ entry is given by \[\hfourtwofourfoura+\hfourtwofourfourb=0.\]In conclusion, we have $f_2=\fourva+0=\fourva$.
\end{proof}

\begin{lem}\label{lem:double_fork_twist}
    The following diagram commutes up to homotopy.
    \begin{equation*}
        \doubleforkcross
    \end{equation*}
\end{lem}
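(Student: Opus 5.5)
The plan is to reduce this lemma to the two-strand computations already done in Section~\ref{sec:flip 2 strands} and Lemmas~\ref{lem:twist merge}/\ref{lem:twist split}. The operation in question is conjugation by two \emph{disjoint} half twists $\Delta_2\sqcup\Delta_2$, one on the top two strands and one on the bottom two, realized on each pair by the fork twist $\tw$ of Definition~\ref{defn:fork twist} followed by a Reidemeister II move. After applying the shuffle conjugation of Lemma~\ref{lem:shuffle_vertex}, the four-valent vertex relates the webs $c_1\circ_1 c_3$ and $c_3\circ_1 c_1$, where the two thick edges live on disjoint pairs of strands. Each pair is acted on by its own fork twist, and as a map of foams the four-valent vertex is isotopic to the identity foam that merely exchanges the heights of the two thick edges.

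First, I would write the double fork twist map as a tensor product $\rho_0\otimes\rho_0$ of the two-strand maps from Proposition~\ref{prop:2strands vsdr}. More precisely, I would use the filtered piece on the relevant resolution, i.e.\ $\rho_w$ of Definition~\ref{defn:n strand twist} for $k=2$, placed side by side. In Soergel calculus the four-valent vertex is a crossing of a red and a green strand with $|i-k|>1$. The components of $\rho_0$ on each pair involve only diagrams in that pair's own color: start dots, end dots, and black dots built from merges and splits in one color, together with polynomial decorations on that pair's strands.

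Second, I would verify strict commutativity, as in the proof of Lemma~\ref{lem:twist merge}, by composing $\rho\circ\fourv\circ\rho^{-1}$ component by component. Since the red and green diagrams live on disjoint strands, the four-valent vertex slides freely past every red-only and green-only morphism. This uses the distant-sliding relations of \cite[Definition 4.4]{stroppel2024braiding}: dots, trivalent vertices and polynomials in far-away strands commute with the four-valent crossing. The composite therefore reduces to $(\rho_0\rho_0^{-1})\otimes(\rho_0\rho_0^{-1})$ stacked with the four-valent vertex, which equals the four-valent vertex on the nose on the $\{0,1\}^2$-graded piece supporting the image. The Koszul signs from interchanging the two tensor factors need one check: the fork twists preserve homological degree and the relevant pieces sit in a single cubical degree, so no sign should arise, consistent with $\chi_2^-(W)=0$.

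The main obstacle is bookkeeping rather than substance. One must confirm that the full double-twist complex (four crossings per side) really factorizes as a tensor product compatible with the distant crossing, including the components of the homotopy $h$ from the perturbation in Proposition~\ref{prop:flip2strands}. If these perturbation terms mix the two pairs, I would instead restrict to the palindromic resolutions forced by Proposition~\ref{prop:RIImove}, where, as in Lemma~\ref{lem:h3=1}, only diagonal terms survive, and check them individually.
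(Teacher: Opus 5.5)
Your proposal is correct and is essentially the paper's argument. The paper also derives the lemma from the on-the-nose commutativity of the four-valent vertex with diagrams of distant colors (\cite[(4.26)]{stroppel2024braiding}); the only difference is packaging: it checks directly that the two composites from the bottom left to the top right of the square coincide, whereas you check $\rho\circ W\circ\iota=W^*$ using $\pi\circ\iota=\idd$ from Proposition~\ref{prop:2strands vsdr}. Your fallback about the perturbation homotopy $h$ is unnecessary, since the vertical maps are filtered pieces, which by Proposition~\ref{prop:flip2strands}(3) are exactly the fork twist maps and contain no component of $h$.
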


\begin{proof}
    This is essentially a consequence of the perfect commutativity of diagrams with distant colors (cf. \cite[(4.26)]{stroppel2024braiding}). To be precise, one can check that the two compositions of maps from the bottom left to the top right are both equal to \[\Figdoubletwist.\]
\end{proof}

\bibliographystyle{amsalpha}
\bibliography{ref}

\end{document}